\newtheorem{teo}{Theorem}[section]
\newtheorem{defi}[teo]{Definition}
\newtheorem{lem}[teo]{Lemma}
\newtheorem{pro}[teo]{Proposition}
\newtheorem{cor}[teo]{Corollary}
\newtheorem{rem}[teo]{Remark}
\newcommand{\idg}{\mathfrak{e}}
\newcommand{\G}{\mathbb{G}}
\newcommand{\g}{\mathfrak{g}}
\newcommand{\R}{\mathbb{R}}
\newcommand{\mean}{-\!\!\!\!\!\!\int}
\newcommand{\fa}{\ \ \ \ \forall \ }
\newcommand{\n}[1]{\left|#1\right|_{_{\mathbb{G}}}}
\newcommand{\prs}[3]{\langle #1 , #2 \rangle_{_#3}}
\newcommand{\h}[1]{{#1}'}
\newcommand{\nh}[1]{{#1}''}
\newcommand{\moll}[1]{\rho_n(\n{#1})}
\begin{document}

\title{Approximations of Sobolev norms in Carnot groups}

\author{Davide Barbieri}

\thanks{The author has been supported by University of Bologna, Universit\'e de Cergy-Pontoise, Universit\'e Franco-Italienne Programme Vinci, and by European Project GALA.}

\maketitle

\begin{abstract}
This paper deals with a notion of Sobolev space $W^{1,p}$ introduced by J.Bourgain, H.Brezis and P.Mironescu by means of a seminorm involving local averages of finite differences. This seminorm was subsequently used by A.Ponce to obtain a Poincar\'e-type inequality. The main results that we present are a generalization of these two works to a non-Euclidean setting, namely that of Carnot groups. We show that the seminorm expressd in terms of the intrinsic distance is equivalent to the $L^p$ norm of the intrinsic gradient, and provide a Poincar\'e-type inequality on Carnot groups by means of a constructive approach which relies on one-dimensional estimates. Self-improving properties are also studied for some cases of interest.
\end{abstract}


\section{Introduction}
This paper is based on a sequence of works by J.Bourgain, H.Brezis and
P.Mironescu \cite{BBM}, H.Brezis \cite{B} and A.Ponce \cite{Po} concerning a
peculiar approach to Sobolev spaces $W^{1,p}$, which allows to obtain Sobolev norms
in terms of integral averages of finite differences, suitably localized by means of
radial mollifiers.

This technique provides finite scale approximations of first derivatives which can
be used to define Sobolev $(1,p)$ norms that are equivalent to the
ordinary ones and reduce to them by letting this scale become infinitely small. 

We developed a generalization of the main results concerning this technique to the
setting of Carnot groups, which represent a class of non-Euclidean metric spaces widely studied in the
context of degenerate PDE's, complex analysis and control theory, lying at the basis of
the so-called sub-Riemannian geometry.

A key result of the cited approximating technique is expressed by \cite[Corollary 1]{BBM}, where it is shown
that for $f \in W^{1,p}(\Omega)$, $\Omega$ being a smooth bounded
domain in $\R^N$, it holds
$$
\int_{\Omega}\frac{|f(y)-f(x)|^p}{|y-x|^p}\rho_n(|y-x|)dy
\stackrel{n \to \infty}{\longrightarrow} K|\nabla
f(x)|^p \qquad \textrm{in}\ \ L^1(\Omega)
$$
where $K=K(p,N)$ and $\rho_n$ are radial mollifiers: nonnegative
functions such that
\begin{displaymath}
\int \rho_n(|x|)dx = 1\ ,\ \ \textrm{and}\ \ \ \lim_{n \to
\infty}\int_\delta^\infty \rho_n(r) r^{N-1} dr = 0 \fa \delta
> 0\ .
\end{displaymath}
Various applications of this method can be found in the
cited references, and among them there is the availability of a
Poincar\'e-type inequality, see \cite{Po}, which makes use of the
described integral approximation: for $f \in L^p(\Omega)$
$$
\int_\Omega |f(x)-f_\Omega|^pdx\ \leq\ C\ \int_\Omega
\int_\Omega\frac{|f(y)-f(x)|^p}{|y-x|^p}\rho_n(|y-x|)dydx\fa n > n_0
$$
$f_\Omega$ being the average of $f$ on $\Omega$, $C$ a constant
which depends on $p,N$ and on the constant of the ordinary
Poincar\'e inequality and $n_0=n_0(C,\Omega)$.

In order to extend to Carnot groups this finite differences approach to Sobolev norms,
originally exploited in a purely Euclidean setting, we could apply techniques similar
to those used in \cite{B} by making use of the differentiable structure introduced in \cite{Pa}
(see also \cite{BLU}, \cite{F}, \cite{FL}) and of the homogeneity of the
space. In particular our proof shows how the essential properties needed to obtain results as sharp as 
in the Euclidean case can be obtained in the case of Carnot groups.

For what concerns the Poincar\'e inequality, our proof is essentially different
from the one given in \cite{Po}: due to the non-Euclidean composition law of the group, the original procedure
cannot be applied, so we provided a new constructive proof that relies on a one-dimensional inequality.
The main advantage of this approach consists of the fact that it shows quantitatively the relationship between the size of the bounded domain on which the mean oscillation of the function is evaluated and the scale at which the gradient is approximated by finite differences.

Carnot groups \cite{BLU}, \cite{S}, \cite{VSC}, \cite{FS}, \cite{G}, \cite{FNafsa} are Lie
groups endowed with a homogeneous structure given by a family of
anisotropic dilations. The Lie algebras of Carnot groups are
stratified: a linear subspace sufficient to generate the whole
algebra can be defined from the commutation relations, or
equivalently from the group composition law. This subspace is
generally called the horizontal layer and, thanks to the Chow
connectivity theorem, it defines an intrinsic metric structure,
called the Carnot-Carath\'eodory distance, which is given by the infimum length
among all the integral curves of horizontal vector fields connecting
two points. Other equivalent metric structures on Carnot groups are
those induced by a so-called homogeneous norm by making use of group
operations. The homogeneous structure induces another peculiarity of
Carnot groups: the Hausdorff dimension is in general higher than the
topological dimension. This phenomenon is due to the different
scalings of the directions, which contribute to the measure in a non
uniform way; from the algebra point of view, it reflects the fact
that some directions are ``heavier'' than others, because they
belong to higher layers.

In what follows, $\G$ will indicate a Lie group, and $\g$
a Lie algebra. By Lie algebra $\g$ of a Lie group $\G$, we will mean
the usual linear space of left-invariant vector fields on $\G$, with
the Lie bracket given by the commutator: $\big([X,Y]f\big)(x)\
\dot{=}\ \big(X(Yf)\big)(x) - \big(Y(Xf)\big)(x), \ \ X,Y \in \g$.
The identification of $\g$ with $T_\idg\G$, the tangent space at the
identity $\idg$ of $\G$, will also be assumed. As a global reference
on this topic, we suggest \cite{V}. Here we will only recall briefly
the notion of exponential map for $\G$ a Lie group and $\g$
its Lie algebra: let $\gamma_{X,x}(t)$ the integral curve of $X \in
\g$ passing through $x \in \G$
\begin{eqnarray*}
\gamma_{X,x}'(t) & = & X(\gamma_{X,x}(t))\\
\gamma_{X,x}(0) & = & x\ .
\end{eqnarray*}
The exponential map is then defined as
\begin{eqnarray*}
\exp: & \g & \longrightarrow \G\\
& X & \longrightarrow \exp(X)\ \dot{=}\ \gamma_{X,\idg}(1)
\end{eqnarray*}
and it is a local diffeomorphism of a neighborhood of\ $0 \in \g$
to a neighborhood of\ $\idg \in \G$:
$$
d\exp(0) = \mathbb{I}_\g\ ,
$$
which extends to a global diffeomorphism if $\G$ is simply
connected, so it allows to construct systems of coordinates on $\G$
given a basis $\big\{X_j\big\}_{j=1}^N$ of \ $\g$. In particular an
element $x \in \G$ is said to have canonical coordinates
 $(x_1,\dots,x_N)\in\R^N$\ if\ \ $x=\exp\
(x_1X_1+\dots+x_NX_N)$.

\vspace{6pt}\noindent {\bf Acknowledgments}\\
This paper is a part of author's PhD thesis, written under the supervision of Thierry Coulhon and Bruno Franchi, to whom the author is particularly indebted for their careful directions. The author thanks also Augusto Ponce and Francesco Serra Cassano for helpful discussions.


\section{Carnot Groups}

\begin{defi}[Carnot groups]
A Carnot group $\G$ is a simply connected Lie group with a
stratified Lie algebra $\g$ \emph{i.e.} a nilpotent Lie algebra that
can be decomposed into a direct sum of linear subspaces called
layers, $\{W_j\}_{j=1}^k$, in such a way that the first layer
generates the whole algebra:\vspace{-4pt}
$$
\g = \bigoplus_{j=1}^k\ W_j \ , \ \ W_{j+1} = [W_1,W_{j}] \ \
\forall \ j<k \ , \ \ W_{j}=0 \ \ \forall \ j>k\ .
$$
The positive integer $k$ is called the step of $\g$.
\end{defi}
We now state some basic properties of such groups.\vspace{6pt}\\
\textsc{Dilations:} given a stratified Lie algebra\
$\g$, it is possible to define algebra dilations as a 1-parameter
group $\{\Delta_\lambda\}_{\lambda> 0}$ of automorphisms of\ $\g$
which act on layers as
$$
\Delta_\lambda (X) = \lambda^j X \fa X \in W_j\ .
$$
Algebra dilations induce on $\G$, through exponential mapping, a
1-parameter group of automorphisms of $\G$ called group dilations
$\{\delta_\lambda\}_{\lambda> 0}$:
$$
\delta_\lambda\left(\delta_\mu(x)\right) = \delta_{\lambda\mu}(x)\ ,
\ \delta_\lambda(x\cdot y) = \delta_\lambda(x)\cdot
\delta_\lambda(y) \fa x,y\ \in\ \G \fa \lambda,\mu >0\ .
$$
The quantity $Q = \sum j\ \textrm{dim}(W_j)$ is called the
homogeneous dimension of $\G$, which equals its
Hausdorff dimension as we will see in Corollary
\ref{Changevar}.\vspace{6pt}\\
\textsc{Identification with $\R^N$:} since the exponential map is a
global diffeomorphism, we can identify \ $\G$ with\ $\R^N$ endowed
with the induced group law (which we shall denote by $\cdot$), $N$
being the linear dimension of $\g$, by making use of canonical
coordinates. If we set $m_j =$ \textrm{dim}$(W_j)$, then $\sum m_j =
N$, which differs from $Q$ if $\g$ has more than one layer.\\
Given a basis $\{X_1, \dots, X_{m_1},X_{m_1+1}, \dots, X_{m_1+m_2},
\dots, X_N\}$ of $\g$, grouped with respect to increasing layers, we can
correspondingly order the canonical coordinates so that the action
of group dilations reads
$$
\delta_\lambda(x_1,\dots,x_N) = (\lambda\ x_1, \dots, \lambda\
x_{m_1}, \lambda^2\ x_{m_1+1}, \dots, \lambda^2\ x_{m_1+m_2}, \dots,
\lambda^k\ x_N)\ .
$$

Accordingly, we can group the canonical coordinates into distinct
families $\{x_1,\dots,x_{m_1}\}, \dots,
\{x_{m_1+\dots+m_{k-1}},\dots,x_{N}\}$, and the ones corresponding
to the $i$-th layer will be said to have weight
$i$.

Because of the special role played by the first layer of $\g$
(that defines the geometry of $\G$), it turns
out to be convenient to distinguish the variables of weight 1 by
putting
$$
x=(\h{x},\nh{x}) \ , \ \textrm{with} \ \h{x} =
(x_1,\dots,x_{m_1}) \ \ \textrm{and} \ \ \nh{x} =
(x_{m_1+1},\dots,x_{N})
$$
Vector operations on $\R^N$ will be performed as
operations on $\G$ and in particular the identity $\idg$ of\ \ $\G$
is given by the vector $0 \in \R^N$.
\vspace{6pt}\\
\textsc{Sub-Riemannian structure:} the first layer, or horizontal
layer, of the algebra $\g$ plays a central role in the geometry of
Carnot groups. It can be identified with a linear subspace of the
tangent space of $\G$ at the origin and defines, by left
translations, a canonical subbundle $H\G$ of the tangent bundle
$T\G$, whose fibers at points $x \in \G$ will be denoted by $H_x\G$.
In particular $\G$ can be endowed with a sub-Riemannian metric: a
scalar product $\prs{.}{.}{X}$ defined on each horizontal fiber
$H_x\G$ that makes the vector fields $X_1, \dots, X_{m_1}$
orthonormal. The corresponding norm will be indicated as
$|.|_{_X}$.\\We say that a continuously differentiable curve
$\gamma:[0,T] \rightarrow \G$ is horizontal if
$\dot{\gamma}(t) \in H_{\gamma(t)}\G$ for all $0\leq t\leq T$.
Chow connectivity theorem asserts that any two points of $\G$
can be connected by a horizontal curve, settling the well-posedness of the
following definition.
\begin{defi}[Carnot-Carath\'eodory distance]
The CC distance is the intrinsic distance on Carnot groups, defined
as the infimum sub-Riemannian length of horizontal curves connecting
two points (see \cite{FL}, \cite{NSW}, \cite{G}):
\begin{equation}\label{CCdist}
d_X(x,y) \doteq \begin{array}[t]{c} \textrm{inf}\\
\scriptstyle{\gamma \ \textrm{\emph{horizontal}}} \end{array}
\int_0^1 |\dot{\gamma}(t)|_{_X}dt
\end{equation}
and we denote by $B_X(x,r)$ the $d_X$ metric balls.
\end{defi}
Due to the left invariance of the vector fields, $d_X$ is left
invariant, that is
$$
d_X(x,y) = d_X(p\cdot x, p\cdot y) \fa x,y,p \in \G\ .
$$
Moreover, since it involves only horizontal vector fields, it is homogeneous
of degree one with respect to group dilations:
$$
d_X(\delta_\lambda(x),\delta_\lambda(y)) = \lambda d_X(x,y) \fa x,y
\in \G \ , \ \lambda > 0 \ .
$$

The metric space $(\G,d_X)$ is geodesic: for any
couple of points there exists a minimizer in the class of horizontal
curves. This is due to Chow Theorem and to the fact that $\G$ is
locally compact and complete (see e.g. \cite[\S 9]{Hein}).

\begin{defi}[Horizontal gradient] For a smooth function $f:\G \rightarrow \R$ we define its horizontal
gradient as
\begin{equation}\label{horgrad}
\nabla_X f = \sum_{j=1}^{m_1} (X_jf) X_j
\end{equation}
The map $\nabla_X f$ defines a horizontal vector field. The
coordinates of $\nabla_X f(x)$ in $H_x\G$ with respect to the chosen
orthonormal basis $\{X_1(x),\dots,X_{m_1}(x)\}$ are given by
$((X_1f)(x),
\dots, (X_{m_1}f)(x))$.
\end{defi}

Whilst both the sub-Riemannian structure and the horizontal gradient
are basis dependent, it is still possible to associate with a
function $f$, for which $\{X_jf(x)\}_{j=1}^{m_1}$ exist for all $x
\in \G$, a well defined linear functional $\mathcal{L}^f$ on $H\G$ that
does not depend on the basis of the first layer:
\begin{eqnarray}\label{horgradinv}
\mathcal{L}^f_x : & H_x\G & \longrightarrow \R\nonumber\\
& V & \longmapsto \prs{\nabla_Xf(x)}{V}{X}\ .
\end{eqnarray}
\begin{defi}[Homogeneous norms]
A homogeneous norm is a continuous function $ \n{.} : \G
\longrightarrow \R^+ $ with the following properties:
\begin{enumerate}
\item $\n{x} = 0 \Leftrightarrow x = 0$\ ;
\item $\n{x^{-1}} = \n{x}$\ ;
\item $\n{\delta_\lambda(x)} = \lambda \n{x}$\ .
\end{enumerate}
\end{defi}
A homogeneous norm induces a left-invariant homogeneous distance by
\begin{equation}\label{homdist}
d(x,y) = \n{y^{-1}\cdot x}
\end{equation}
and a quasi triangular inequality holds (see \cite[Proposition 1.6]{FS}):
\begin{equation}\label{quasitr}
d(x,y) \leq \alpha \left( d(x,z) + d(z,y)\right) \ \ \ \textrm{for
some}\ \alpha > 0\ .
\end{equation}

With $B(x,r)$ we will indicate the metric ball with respect to the
homogeneous distance (\ref{homdist}):
$B(x,r) = \{y \in \G\ \textrm{such that}\ d(x,y) < r\}$.

A particular homogeneous norm can be constructed starting from the
Carnot-Carath\'eodory distance, by $|x|_{CC} = d_X(x,0)$, and we
recall that any homogeneous norm is equivalent to $|.|_{CC}$, as for
the respective distances: if we call $\lambda$ the equivalence parameter, then
\begin{equation}\label{equiv}
\lambda^{-1} d_X(x,y) \leq d(x,y) \leq \lambda d_X(x,y)\ .
\end{equation}

\begin{defi} [Homogeneous homomorphisms]
Let $\left(\G_1,\cdot\right)$ and $\left(\G_2,\centerdot\right)$ denote
Carnot groups with dilations $\delta_\lambda^{(i)}$, $i=1,2$.
A map $\varphi:\G_1 \rightarrow \G_2$ is called a homogeneous
homomorphism if
\begin{itemize}
\item[i)] $\varphi(\delta_\lambda^{(1)}(x)) =
\delta_\lambda^{(2)}(\varphi(x)) \fa x \in \G_1\ \ , \lambda > 0$\ ;
\item[ii)] $\varphi(x\cdot y) = \varphi(x)\centerdot\varphi(y) \fa x,y \in \G_1$\ .
\end{itemize}
\end{defi}

\begin{rem} [$\R^M$-valued homogeneous homomorphisms]\label{RemHom}
If $(\G_2,\centerdot)=(\R^M,+)$, dilations $\delta_\lambda^{(2)}$
are Euclidean. Homogeneous homomorphisms $\varphi : \left(\G,\cdot
\right)\rightarrow \left(\R^M,+ \right)$ reduce then (see
\cite[Proposition 2.5]{FSSC_S2}) to functions from the weight 1
variables of \ $\G$ to $\R^M$, with the property
$$
\varphi\big(\delta_{\lambda_1}(h_1)\cdot\delta_{\lambda_2}(h_2)\big)
= \lambda_1\varphi(h_1) + \lambda_2\varphi(h_2)\fa h_1, h_2 \in \G,
\lambda_1, \lambda_2 \in \R\ .
$$
To emphasize the fact that $\varphi$ acts only on the horizontal
components of points $x \in \G$ we will write $\varphi(\h{x})$.
\end{rem}

\begin{defi}[$\G$-linear functions]
A homogeneous homomorphism $\varphi$ with values in $\R^M$ will be called a $\G$-linear
function, after \cite{FNafsa}. Its action is actually a matrix product in
horizontal coordinates, defined by $Mm_1$ real constants
$\{A^\varphi_{ij}\},\ i=1\dots M\ ,\ j=1\dots m_1$\ and reads
explicitly
$$\varphi(\h{x}) = \left(\sum_{j=1}^{m_1} A^\varphi_{1j} x_j\ , \dots \ , \
\sum_{j=1}^{m_1} A^\varphi_{Mj} x_j \right)\ .
$$
The real-valued matrix $(A^\varphi)_{ij}$ will be called its representative matrix.

In the case of $M=m_1$ and $A^\varphi \in GL(m_1,\R)$, we obtain a
 class of maps from $\G$ to $\G$ which we call
$\G$-change of basis, by a $\phi^A:\G \rightarrow \G$ such that
\begin{eqnarray}\label{G-change}
\phi^A: & x & \longmapsto \ \ \ \ \xi\nonumber\\
& (\h{x}\ ,\nh{x}) & \longmapsto (\varphi(\h{x}),\nh{x})\ .
\end{eqnarray}
Such a transformation indeed represents the change of canonical
coordinates in $\G$ resulting from a change of basis in the
horizontal layer of the algebra.\\
By horizontal rotation we will mean a $\G$-change of basis with $A^\varphi \in O(m_1,\R)$.
\end{defi}

\begin{rem}
There always exists a homogeneous norm which is invariant under horizontal rotations
(see \cite[Ch.13\, \S 7.12]{S}). One concrete realization of such a norm is obtained in \cite{FSSC_S2}, Theorem
5.1.
\end{rem}

We now introduce the commonly used notion of
differentiability on Carnot groups, given by Pansu in \cite{Pa}, for
the case of real-valued functions.
\begin{defi} [Pansu differentiability of real valued functions]\label{defiPdiff}
We say that a function $f:\G\rightarrow\R$ is P-differentiable at
$x\in\G$ if there exists a $\G$-linear map $L^f_x:\G\rightarrow\R$
such that
\begin{equation}\label{Plim}
\lim_{\n{h}\to 0} \frac{f(x\cdot h) - f(x) - L^f_x(\h{h})}{\n{h}} =
0 \ .
\end{equation}
\end{defi}
Let $H(x) = \sum_{i=1}^{m_1} h_i X_i(x) \in H_x\G$ and $h = \exp(H)
= (\h{h},0) \in \G$. It is well known that if $f$ is
P-differentiable at $x$, then
\begin{displaymath}
L^f_x(\h{h}) = \frac{d}{dt} f(x\cdot \exp(tH))\bigg|_{t=0} = (Hf)(x)
= \mathcal{L}^f_x(H(x))
\end{displaymath}
where $\mathcal{L}^f_x$ is given by (\ref{horgradinv}), so that the Pansu differential does not depend on the basis chosen
for $\g$. In the rest of the paper we will use the notation
$$
L^f_x(\h{h}) = \prs{\nabla_Xf}{\h{h}}{X}\ .
$$

Integration on Lie groups is performed with respect to the
Haar measure: the following proposition provides its construction on
Carnot groups.
\begin{pro}\label{Int}
Let $dX$ be the Lebesgue measure on the linear space $\g$. Then the
Haar measure $d\mu(x)$ on $\G$ is given by the image through
exponential mapping of $dX$. This means that, given
$f:\G\rightarrow\R$, the integration on $\G$ can be expressed as an
integral in canonical coordinates, that is, on $\R^N$:
\begin{equation}\label{haar}
\int_\G f(x)d\mu(x) \ \ \dot{=} \
\int_{\R^N}f(x_1,\dots,x_N)dx_1\dots dx_N\ \footnote{For the sake of simplicity
we keep the same notation for $f$ on the group and on $\R^N$.}.
\end{equation}
\end{pro}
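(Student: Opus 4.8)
The plan is to identify the candidate measure $\mu$---the pushforward through $\exp$ of the Lebesgue measure $dX$ on $\g$, that is, the Lebesgue measure $dx_1\cdots dx_N$ in canonical coordinates---with the Haar measure by verifying its defining property, left-invariance. Since on a locally compact group the left Haar measure is the unique left-invariant Radon measure up to a positive multiple, and since $\exp$ is a global diffeomorphism $\g \to \G$ (so $\mu$ is a well-defined Radon measure on $\G$), it suffices to show that for every fixed $p \in \G$ the left translation $L_p : x \mapsto p \cdot x$ preserves $\mu$, after which the normalization in (\ref{haar}) is a matter of convention.

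Concretely, I would prove that $L_p$, read in canonical coordinates, has Jacobian determinant identically equal to $1$. The key input is that $\g$ is nilpotent of step $k$, so by the Baker--Campbell--Hausdorff formula the group law $\log\big(\exp(P)\cdot\exp(X)\big) = P + X + \tfrac{1}{2}[P,X] + \cdots$ is a finite sum of iterated brackets, hence a polynomial map in the canonical coordinates of $p$ and of $x$. First I would record the grading information dictated by the stratification $\g = \bigoplus_j W_j$ with $W_{j+1}=[W_1,W_j]$: a bracket of an element of weight $a$ with one of weight $b$ lands in the layer of weight $a+b$. Consequently the component of $p\cdot x$ of weight $i$ equals $P_i + X_i$ plus a polynomial in the coordinates of $p$ and in the coordinates of $x$ of weight strictly less than $i$.

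From this triangular dependence the Jacobian follows immediately. Ordering the coordinates by increasing weight, the derivative of the weight-$i$ block of $p\cdot x$ with respect to a coordinate of weight $j$ vanishes for $j>i$, reduces to the identity for $j=i$ (any bracket correction containing $X_i$ would have weight $>i$, so it cannot contribute to the weight-$i$ component), and is merely some polynomial for $j<i$. Hence the differential $d(L_p)_x$ is block lower-triangular with identity blocks along the diagonal, so $\det d(L_p)_x = 1$ for all $x$. By the change-of-variables formula this yields $\int f(p\cdot x)\,d\mu(x) = \int f(x)\,d\mu(x)$, which is precisely left-invariance, and identifies $\mu$ with the Haar measure.

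The main obstacle is the bookkeeping in the second step: making precise, via the explicit bracket expansion and the layer decomposition, that the weight-$i$ component of the product depends on the $x$-variables only through those of weight $\le i$ and that the weight-$i$ variables enter solely through the unit-coefficient linear term $X_i$. Once this is established the determinant computation is automatic. The same argument applied to right translations gives determinant $1$ as well, so $\G$ is unimodular and there is no ambiguity between left and right Haar measure in (\ref{haar}).
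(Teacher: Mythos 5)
The paper offers no proof of this proposition at all --- it is stated as a classical fact (cf.\ \cite{FS}, \cite{BLU}) --- and your argument is correct and is precisely the standard proof that the literature relies on: the Baker--Campbell--Hausdorff expansion, finite by nilpotency, together with the grading $[W_a,W_b]\subset W_{a+b}$ (which follows from the stratification by induction using the Jacobi identity, a small point worth recording explicitly) gives the weight-triangular form of translations with identity diagonal blocks, hence unit Jacobian, invariance of the pushforward of $dX$, and identification with the Haar measure by uniqueness. Your closing remark that the same computation for right translations yields unimodularity is exactly the property the paper subsequently uses in Corollary \ref{Changevar}, so the proposal proves slightly more than the statement asks and does so correctly.
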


The main point here is that integration on $\G$ can be
performed as in $\R^N$, keeping in mind that group operations affect
the measure through canonical coordinates changes: for this reason
we will indicate the measure $d\mu(x)$ simply as $dx$. The following
corollary states three basic properties of the measure under changes
of variables.
\begin{cor}\label{Changevar}
Let $\G$ be a Carnot group and $f:\G \longrightarrow \R$ an
integrable function on $\G$, then the Haar measure of $\G$
\begin{itemize}
\item[1.] is invariant under left and right translations (unimodularity):
$$
\int_\G f(\alpha\cdot x) dx = \int_\G f(x\cdot\alpha) dx = \int_\G
f(x) dx\fa \alpha \in \G\ ;
$$
\item[2.] scales under group dilations by the homogeneous dimension of $\G$:
$$
\int_\G f(\delta_\lambda(x))dx = \lambda^Q\int_\G f(x)dx \fa \lambda
> 0 \ ;
$$
\item[3.] is affected by $\G$-changes of basis (\ref{G-change}) as the Lebesgue measure on $(\R^N,+)$, \emph{i.e.} putting $\xi\, =\,\phi^A(x)\, =\, (A\h{x},\nh{x})$ we have:
$$
\int_\G f(x) dx = \int_\G f(\xi) |\emph{det}A| d\xi\fa A\in GL(m_1,\R)\ .
$$
\end{itemize}
\end{cor}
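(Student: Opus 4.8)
The plan is to reduce all three assertions to the classical Euclidean change-of-variables formula applied in canonical coordinates. By Proposition \ref{Int} integration on $\G$ coincides with Lebesgue integration in canonical coordinates on $\R^N$, so each statement amounts to computing the Jacobian determinant of the coordinate expression of the relevant map $\R^N\to\R^N$ and performing the substitution.

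I would dispose of parts 2 and 3 first, since there the maps are linear and explicit. In canonical coordinates the dilation is the diagonal map $\delta_\lambda(x_1,\dots,x_N)=(\lambda^{w_1}x_1,\dots,\lambda^{w_N}x_N)$, where $w_i$ denotes the weight of the $i$-th coordinate; its Jacobian determinant is $\prod_i \lambda^{w_i}=\lambda^{\sum_j j\,\dim W_j}=\lambda^{Q}$, and the change-of-variables formula yields the stated scaling (this is also what identifies $Q$ with the Hausdorff dimension). Likewise, by the very definition (\ref{G-change}) the $\G$-change of basis is the block map $\phi^A(\h{x},\nh{x})=(A\h{x},\nh{x})$, i.e.\ $\mathrm{diag}(A,\mathbb{I})$ on $\R^{m_1}\times\R^{N-m_1}$, so its Jacobian determinant is $|\det A|$ and substitution gives part 3.

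The genuine content is part 1, the invariance under translations, and I expect it to be the main obstacle. Here I would exploit the Baker--Campbell--Hausdorff formula together with the stratification. Writing the group law in coordinates as $z=\alpha\cdot x=\log(\exp\alpha\,\exp x)$, each component $z_i$ is a polynomial in $(\alpha,x)$ which, because $\delta_\lambda$ is an automorphism, is $\delta$-homogeneous of degree $w_i$: every monomial occurring in $z_i$ has total weight $w_i$. Consequently, ordering the coordinates by increasing weight, the differential of the left translation $L_\alpha:x\mapsto\alpha\cdot x$ is lower triangular --- a monomial of $z_i$ that is linear in $x_j$ has weight at least $w_j$, hence can occur only when $w_j\le w_i$ --- and its diagonal is the identity, since the only $\alpha$-free contribution to $z_i$ is $x_i$ itself (put $\alpha=0$, so that $z=x$). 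Hence $\det d(L_\alpha)_x=1$ for every $\alpha,x$, and the same argument applied to $R_\alpha:x\mapsto x\cdot\alpha$ gives $\det d(R_\alpha)_x=1$; left and right invariance, that is unimodularity, follow at once. Equivalently one may simply invoke that a simply connected nilpotent Lie group is unimodular, because $\mathrm{Ad}(\alpha)=\exp(\mathrm{ad}_\alpha)$ is unipotent and thus has unit determinant. I would present the triangularity argument as the primary route, since it is self-contained and produces both invariances directly from the construction of the measure in Proposition \ref{Int}.
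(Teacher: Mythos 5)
Your proposal is correct, and it supplies exactly the argument the paper leaves implicit: the corollary is stated without proof, as a standard consequence of Proposition \ref{Int} (the argument you give is the classical one, cf.\ Folland--Stein, \emph{Hardy spaces on homogeneous groups}, Ch.~1). Parts 2 and 3 are indeed immediate Jacobian computations for the linear maps $\delta_\lambda$ and $\phi^A=\mathrm{diag}(A,\mathbb{I})$, and for part 1 your weight-homogeneity argument is sound: by the Baker--Campbell--Hausdorff formula each coordinate $z_i$ of $\alpha\cdot x$ is a polynomial all of whose monomials have total weight $w_i$, so any monomial containing $x_j$ (to any power, not only linearly --- but the weight count is the same) forces $w_j\le w_i$, the equal-weight contribution must be a constant-coefficient linear term, and setting $\alpha=0$ identifies it with $x_i$; hence $\det d(L_\alpha)_x\equiv 1$, and symmetrically for $R_\alpha$. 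As you note, this triangularity argument simultaneously justifies Proposition \ref{Int} itself, which the abstract unipotence-of-$\mathrm{Ad}$ argument alone would not (the latter gives unimodularity of the group but not that the pushforward of Lebesgue measure is the invariant measure), so your choice of primary route is the right one.

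One caveat on bookkeeping: carry the substitution through explicitly rather than asserting it ``yields the stated scaling.'' With $\det d\delta_\lambda=\lambda^Q$, the change of variables $y=\delta_\lambda(x)$ gives $\int_\G f(\delta_\lambda(x))\,dx=\lambda^{-Q}\int_\G f(y)\,dy$, equivalently $\int_\G f(x)\,dx=\lambda^Q\int_\G f(\delta_\lambda(x))\,dx$; this is the version consistent with the relation $|B(x,r)|=r^Q\,|B(0,1)|$ that the paper deduces from items 1 and 2, so the displayed formula in item 2 should be read with the factor on that side (and the placement of $|\mathrm{det}A|$ in item 3 deserves the same care, harmless in the paper's only application, Lemma \ref{LemFollandLike}, where $A$ is orthogonal). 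Since your method produces the correct orientation automatically, stating the substitution in full removes any ambiguity.
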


The first two properties provide the basic relation
$$
\left|B(x,r)\right| = r^Q \left|B(0,1)\right| \fa x \in \G
$$
where $|B|$ stands for the measure of the ball $B$, so we will set $c_B = \left|B(0,1)\right|$.

A notion of smoothness for functions on Carnot groups is that
of being $\mathcal{C}^1$ with respect to horizontal vector fields.
This is stronger than being P-differentiable, but requires less
regularity than being $\mathcal{C}^1$ in Euclidean sense.
\begin{defi}
A continuous function $f:\G\rightarrow\R$ is said to be in
$\mathcal{C}^1_\G(\G,\R)$ if $X_jf:\G\rightarrow\R$ exist and are
continuous for $j=1,\dots,m_1$.
\end{defi}
\begin{teo}\label{teoC1-Pdiff}
If $f$ is a $\mathcal{C}^1_\G(\G,\R)$ function, then $f$ is
P-differentiable.
\end{teo}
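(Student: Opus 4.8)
The plan is to verify the defining limit (\ref{Plim}) with the candidate differential $L^f_x(\h h)=\prs{\nabla_X f(x)}{\h h}{X}=\sum_{j=1}^{m_1}(X_jf)(x)\,h_j$. First I would reduce to the origin: setting $g(y)=f(x\cdot y)$ and using that the fields $X_j$ are left invariant, one has $(X_jg)(y)=(X_jf)(x\cdot y)$, so $g$ is again $\mathcal C^1_\G$ with $(X_jg)(0)=(X_jf)(x)$. It therefore suffices to prove P-differentiability at $\idg=0$, i.e.
$$
\lim_{\n h\to 0}\frac{g(h)-g(0)-\sum_{j=1}^{m_1}(X_jg)(0)\,h_j}{\n h}=0 .
$$

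The geometric input is that horizontal coordinates add under the group law: by Baker--Campbell--Hausdorff all bracket corrections land in the higher layers $W_2,\dots,W_k$, so $(x\cdot y)_k=x_k+y_k$ for $k\le m_1$. Consequently each $X_j$ with $j\le m_1$ has first--layer part exactly $\partial_{x_j}$, and hence along any horizontal curve $\gamma$ with $\dot\gamma(s)=\sum_{j=1}^{m_1}a_j(s)X_j(\gamma(s))$ one has $\dot\gamma_k(s)=a_k(s)$ for $k\le m_1$. Connecting $0$ to $h$ by a constant--speed length--minimizing geodesic $\gamma:[0,1]\to\G$ (which exists since $(\G,d_X)$ is geodesic) then gives $h_k=\int_0^1 a_k(s)\,ds$. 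Writing $f(h)-f(0)=\int_0^1\frac{d}{ds}f(\gamma(s))\,ds=\int_0^1\sum_j a_j(s)(X_jf)(\gamma(s))\,ds$ and subtracting $\sum_j(X_jf)(0)h_j=\int_0^1\sum_j a_j(s)(X_jf)(0)\,ds$, I obtain
$$
f(h)-f(0)-L^f_0(\h h)=\int_0^1\sum_{j=1}^{m_1}a_j(s)\big[(X_jf)(\gamma(s))-(X_jf)(0)\big]\,ds .
$$
To close the estimate I use the constant speed $\big(\sum_j a_j(s)^2\big)^{1/2}=d_X(0,h)$, whence $\sum_j|a_j(s)|\le\sqrt{m_1}\,d_X(0,h)\le\sqrt{m_1}\,\lambda\n h$, together with the fact that $\n{\gamma(s)}\le\lambda^2\n h$ for all $s$ by (\ref{equiv}). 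Given $\varepsilon>0$, continuity of the $X_jf$ at $0$ provides $\delta>0$ so that $|(X_jf)(\gamma(s))-(X_jf)(0)|<\varepsilon$ once $\n h$ is small enough; the right--hand side is then bounded by $\sqrt{m_1}\,\lambda\,\varepsilon\,\n h$, and dividing by $\n h$ and letting $\varepsilon\to 0$ yields (\ref{Plim}).

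The main obstacle is the step I have used tacitly: the fundamental theorem of calculus $\frac{d}{ds}f(\gamma(s))=\sum_j a_j(s)(X_jf)(\gamma(s))$ for a merely $\mathcal C^1_\G$ function along a horizontal curve. This is \emph{not} a consequence of ordinary differentiability, and it cannot invoke P-differentiability without circularity. The difficulty lies entirely in the higher layers: writing $\gamma(s+\tau)=\gamma(s)\cdot\eta(\tau)$, the horizontal part of $\eta(\tau)$ is $\tau a(s)+o(\tau)$ and is controlled by differentiating along the integral curve of $\sum_j\eta_j(\tau)X_j$ --- here the identity $\frac{d}{dr}f(p\cdot\exp(rV))=(Vf)(p\cdot\exp(rV))$ holds by the very definition of the vector field $V$, needing only continuity of $X_jf$. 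However $\eta(\tau)$ also carries a purely vertical drift whose $W_j$--coordinate is $o(\tau)$ while its homogeneous norm is only $o(\tau^{1/j})\gg\tau$, so continuity alone does not bound its contribution. Controlling this vertical term is the technical heart: one realizes the vertical displacement as a commutator word in the horizontal flows and checks that the increments of $f$ along that word cancel up to the modulus of continuity of the $X_jf$, producing the required $o(\tau)$. This is precisely where the stratified, commutator--generated structure and the homogeneity of $\G$ are indispensable.
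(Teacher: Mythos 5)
You have correctly identified the crux, but you have not closed it: your argument hinges on the chain rule $\frac{d}{ds}f(\gamma(s))=\sum_j a_j(s)(X_jf)(\gamma(s))$ along an arbitrary horizontal (geodesic) curve, which for a function that is merely $\mathcal{C}^1_\G$ --- possibly not differentiable at all in the vertical directions --- is precisely the point at issue, and your final paragraph only gestures at a cancellation argument (``one realizes the vertical displacement as a commutator word\dots'') without carrying it out. As written, the proposal therefore has a genuine gap, and you acknowledge it yourself. For comparison: the paper does not prove this theorem at all; it cites \cite[\S 5, Theorem 7]{FSSC_H}, so there is no internal proof to match against.

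The standard repair --- and in effect what your last paragraph is groping toward --- is to discard the geodesic and connect $0$ to $h$ by a word of integral curves of the basis horizontal fields, $h=\prod_{k=1}^{M}\exp\left((-1)^{\omega_k}t_{k}X_{i_k}\right)$ with $|t_k|\leq C\n{h}$; this is exactly the quantitative Chow-type control (\ref{Control}) of Theorem \ref{Morbidelli}, rescaled by homogeneity and (\ref{equiv}). Along each such segment the identity $\frac{d}{ds}f(p\cdot\exp(sX_j))=(X_jf)(p\cdot\exp(sX_j))$ holds by the very definition of $X_jf$ and needs only its continuity (as you yourself note for constant horizontal fields), so with partial products $\zeta_k$ one gets
\begin{displaymath}
f(h)-f(0)-L^f_0(\h{h})=\sum_{k=1}^{M}\int_0^{(-1)^{\omega_k}t_k}\bigl[(X_{i_k}f)(\zeta_{k-1}\cdot\exp(sX_{i_k}))-(X_{i_k}f)(0)\bigr]\,ds\ ,
\end{displaymath}
where one uses, as in your proposal, that first-layer coordinates add under the group law, so that $h_j=\sum_{k:\ i_k=j}(-1)^{\omega_k}t_k$ and the vertical components of $h$ never enter $L^f_0$. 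Since all intermediate points stay within $d_X$-distance $C'\n{h}$ of the origin, uniform continuity of the $X_jf$ on a fixed compact neighborhood bounds the right-hand side by $o(1)\cdot M\cdot C\n{h}=o(\n{h})$, which is (\ref{Plim}). With this substitution your reduction to the origin and your final $\varepsilon$-estimate go through unchanged, and the ``vertical drift'' problem you flag simply disappears, because one never differentiates $f$ along a curve whose horizontal velocity varies.
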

\begin{rem}\label{remIncl}
$\mathcal{C}^1 \subset \mathcal{C}^1_\G$, and the inclusion is
strict.
\end{rem}
See \cite[\S 5, Theorem 7]{FSSC_H} for a proof of Theorem
\ref{teoC1-Pdiff}, and \cite[\S 5, Remark 6]{FSSC_H} for an example of the inclusion in
Remark \ref{remIncl}.

Along with these basic group-related notions of regularity, we recall the commonly
used notion of Sobolev space $W^{1,p}$.
\begin{defi}[Sobolev space $W^{1,p}_\G(\G)$]
Let $\G$ be a Carnot group and let $p \geq 1$ be fixed. We denote by
$W^{1,p}_\G(\G)$ the set of functions $f \in L^p(\G)$ such that $X_jf
\in L^p(\G)$ for $j=1,\dots,m_1$, endowed with the norm
\begin{equation}\label{sobnorm}
\|f\|_{1,p} = \left(\int_\G |f(x)|^pdx\right)^{1/p} + \left(\int_\G
|\nabla_Xf(x)|_{_X}^pdx\right)^{1/p}\ .
\end{equation}
In particular we recall that the classical density result of smooth
functions still holds, see \cite{FSSC_MS}.
\end{defi}

To conclude the section, we recall a result from \cite[Proposition 1.5]{F} (see also \cite[\S 2 Proposition 3]{KS},
 and \cite[Proposition 1.13]{FS}) which will be
useful in the computations.
\begin{lem}\label{LemFolland}
Let $\G$ be a Carnot group and let $\n{.}$ be a homogeneous norm.
Let $f:\G \rightarrow  \R$ be a homogeneous function of degree $-Q$, that is
$f(\delta_\lambda(x)) = \lambda^{-Q}f(x)\ \forall x \in \G$, and
locally integrable away from $0$. Then there exists a constant
$M(f)$ such that
\begin{equation}\label{FollandInt}
\int_\G f(x) g(\n{x}) d\mu(x) = M(f) \int_0^\infty g(r)\
\frac{dr}{r}
\end{equation}
for any $g:\R^+\rightarrow\R$ \ such that each side makes sense.
\end{lem}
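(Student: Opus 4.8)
The plan is to reduce the $N$-dimensional integral to an iterated ``radial times spherical'' integral, exploiting the interplay between the $-Q$ homogeneity of $f$ and the $Q$ scaling of the Haar measure under dilations: the two exponents will cancel and leave exactly the scale-invariant radial density $\tfrac{dr}{r}$. The whole argument rests on Corollary \ref{Changevar} (dilation scaling of the measure) and the elementary theory of the multiplicative Cauchy equation.

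First I would study the integral of $f$ over the dilation-invariant annuli $A(s,t) = \{x \in \G : s < \n{x} < t\}$, setting $\Psi(s,t) = \int_{A(s,t)} f(x)\,dx$. This is finite: since $\n{.}$ is continuous and proper, $\overline{A(s,t)}$ is a compact subset of $\G \setminus \{0\}$, where $f$ is assumed locally integrable. Performing the change of variables $x = \delta_s(y)$, whose Jacobian equals $s^Q$ by the dilation scaling of the Haar measure, and using $f(\delta_s(y)) = s^{-Q} f(y)$ together with $\n{\delta_s(y)} = s\,\n{y}$, the factors $s^Q$ and $s^{-Q}$ cancel and one obtains $\Psi(s,t) = \Psi(1,t/s)$. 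Hence $\Psi(s,t) = \psi(t/s)$ for $\psi(a) := \Psi(1,a)$, and the obvious additivity $\Psi(s,t) + \Psi(t,u) = \Psi(s,u)$ for $s<t<u$ translates into the multiplicative Cauchy equation $\psi(ab) = \psi(a) + \psi(b)$.

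Next I would solve this functional equation. Dominated convergence (the indicator of $A(1,a)$ converges pointwise as $a$ varies, dominated by $|f|$ on a fixed compact annulus) shows that $\psi$ is continuous, so the Cauchy equation forces $\psi(a) = M(f)\log a$, where $M(f) := \psi(e) = \int_{A(1,e)} f(x)\,dx$ is finite by the compactness argument above; in polar language this is $M(f) = \int_{\{\n{\omega}=1\}} f\, d\sigma$ for the spherical measure $d\sigma$ induced by $dx$. This already proves the lemma when $g$ is the indicator of an interval $(s,t)$, since then $\int_\G f(x)\, g(\n{x})\,dx = \Psi(s,t) = M(f)\log(t/s) = M(f)\int_0^\infty g(r)\,\tfrac{dr}{r}$.

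Finally I would pass from indicators to general $g$ by linearity over simple functions and monotone convergence, first for $g \geq 0$ and then splitting $g = g^+ - g^-$, which yields the identity for every $g$ for which either side is finite. I expect the main obstacle to be the rigorous justification of the first step, namely establishing that the annulus integrals behave exactly like a scale-invariant logarithmic radial distribution, together with the continuity input needed to rule out the pathological solutions of the Cauchy equation; the hypothesis that $f$ is locally integrable away from $0$ is precisely what is needed, both to make $\Psi(s,t)$ finite and to guarantee $M(f) < \infty$. Once the cancellation of the $\pm Q$ exponents is set up correctly, the remaining passages are routine approximation.
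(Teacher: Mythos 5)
Your argument is correct in substance, but note first that the paper does not actually prove Lemma \ref{LemFolland}: it is recalled from the literature (\cite[Proposition 1.5]{F}; see also \cite{KS} and \cite{FS}), and the only computation carried out in the text is the evaluation of the constant $M\left(\n{x}^{-Q}\right) = Qc_B$. So there is no in-paper proof to diverge from; what you have reconstructed is essentially Folland's original argument. Your key reduction is sound: the change of variables $x=\delta_s(y)$, whose Jacobian $s^Q$ (Corollary \ref{Changevar}) exactly cancels the $-Q$-homogeneity of $f$, makes the annulus integral $\Psi(s,t)$ a function of $t/s$ alone; additivity then gives the multiplicative Cauchy equation, continuity (via dominated convergence on a fixed compact annulus) rules out pathological solutions and forces $\psi(a)=M(f)\log a$; and the indicator case is precisely $\Psi(s,t)=M(f)\log(t/s)=M(f)\int_s^t \frac{dr}{r}$.

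Two small points deserve explicit justification in a final write-up. First, the additivity $\Psi(s,t)+\Psi(t,u)=\Psi(s,u)$ tacitly uses that each sphere $\{\n{x}=t\}$ is null for the Haar measure; this follows from homogeneity, since the spheres with $t$ in a nondegenerate interval are uncountably many pairwise disjoint subsets of a finite-measure annulus, each a dilate of the unit sphere with measure $t^Q\,\mu(\{\n{x}=1\})$, so the common value must be zero. Second, monotone convergence applies to nonnegative integrands, whereas $f$ may change sign: for $g\geq 0$ you should run the approximation separately for $f^+$ and $f^-$, both of which are homogeneous of degree $-Q$ (positive scalars preserve positive parts) and locally integrable away from $0$, obtaining constants $M(f^\pm)$ with $M(f)=M(f^+)-M(f^-)$, and then subtract; this is also exactly where the caveat ``such that each side makes sense'' in the statement enters. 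With these routine patches your proposal is a complete, self-contained proof of the cited lemma.
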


We compute explicitly the constant $M$ for the 
function $\displaystyle{f(x) = \n{x}^{-Q}}$:
\begin{displaymath}
c_B = \int_{B(0,1)} d\mu(x) = \int_\G \frac{1}{\n{x}^Q}
\n{x}^Q\chi_{[0,1]}(\n{x}) d\mu(x) = M\left(\frac{1}{\n{x}^Q}\right)
\int_0^1 r^{Q-1} dr
\end{displaymath}
hence $M\left(\n{x}^{-Q}\right) = Q c_B$.
This allows the explicit computation of integrals on balls of
functions depending only on the distance from the center of the ball
in terms of integrals on the real line with no need of any coarea
formula:
$$
\int_{B(x_0,R)} f(\n{x_0^{-1}\cdot x}) d\mu(x) = \int_{B(0,R)}
f(\n{y}) d\mu(y) = Q c_B \int_0^R f(r) r^{Q-1} dr\ .
$$


\section{The Bourgain-Brezis-Mironescu Sobolev space\label{sec:BS}}

We begin providing a notion of radial mollifiers on Carnot groups.
\begin{defi} [Radial mollifiers]\label{Radmol}
Let $\G$ be a Carnot group and let $\n{.}$ be a given homogeneous
norm. A family of functions $\{\rho_n\}_{n> 0}$, $\rho_n : \G
\longrightarrow \R^+$, is said to be a family of radial mollifiers
with respect to the norm $\n{.}$ if each $\rho_n(x)$ depends only on $\n{x}$
and
\begin{eqnarray*}
i) & & \int_\G \rho_n(x) dx = 1\fa n > 0\ ;\\
ii) & & \int_{\G \smallsetminus B(0,\delta)} \rho_n(x) dx
\stackrel{n \rightarrow \infty}{\longrightarrow} \ 0 \fa \delta > 0\
.
\end{eqnarray*}
\end{defi}
\begin{lem}
Let $\left\{\tilde{\rho}_n\right\}$ be a family of functions
$\tilde{\rho}_n : \R^+ \longrightarrow \R^+$, and set $\rho_n: \G
\longrightarrow \R^+$ as $\rho_n(x) = \tilde{\rho}_n(\n{x})$. Then
$\rho_n$ are radial mollifiers in the sense of Definition
\ref{Radmol} if and only if it holds
\begin{itemize}
\item[1.] $ \displaystyle{\int_0^\infty \tilde{\rho}_n(r)r^{Q-1} dr =
\frac{1}{Qc_B}\fa n > 0}\ ; $
\item[2.] $\displaystyle{\int_\delta^\infty \tilde{\rho}_n(r)r^{Q-1} dr
\stackrel{n \rightarrow \infty}{\longrightarrow} \ 0 \fa \delta >
0}\ .$
\end{itemize}
\end{lem}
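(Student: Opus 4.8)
The plan is to reduce both defining conditions to the radial integration formula established just before this section, namely that for a function $g$ of the homogeneous norm one has $\int_\G g(\n{x})\,d\mu(x) = Q c_B \int_0^\infty g(r)\, r^{Q-1}\, dr$, which was obtained from Lemma \ref{LemFolland} by taking $f(x) = \n{x}^{-Q}$ together with the computed value $M(\n{x}^{-Q}) = Q c_B$. Since by construction $\rho_n(x) = \tilde{\rho}_n(\n{x})$ depends only on $\n{x}$, the radial symmetry demanded in Definition \ref{Radmol} is automatic, and the two integral conditions i) and ii) are integrals of a function of the norm over regions defined by the norm. Each of them can therefore be transcribed verbatim into a one-dimensional integral, and the whole argument is just an application of that identity.

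First I would treat condition i). Applying the integration formula with $g = \tilde{\rho}_n$ gives $\int_\G \rho_n(x)\,dx = Q c_B \int_0^\infty \tilde{\rho}_n(r)\, r^{Q-1}\, dr$. Since $Q c_B$ is a fixed positive constant, the normalization $\int_\G \rho_n\,dx = 1$ holds for every $n>0$ exactly when $\int_0^\infty \tilde{\rho}_n(r)\, r^{Q-1}\, dr = (Q c_B)^{-1}$ for every $n>0$, which is item 1.

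Next, for condition ii), I would apply the same formula to the exterior of the ball $B(0,\delta)$. Because $B(0,\delta) = \{\n{x} < \delta\}$, integrating $\tilde{\rho}_n(\n{x})$ over $\G \smallsetminus B(0,\delta)$ amounts to inserting the cut-off $\chi_{[\delta,\infty)}$ into the radial integral, giving $\int_{\G \smallsetminus B(0,\delta)} \rho_n(x)\,dx = Q c_B \int_\delta^\infty \tilde{\rho}_n(r)\, r^{Q-1}\, dr$. As $Q c_B > 0$ does not depend on $n$ or $\delta$, the left-hand side tends to $0$ as $n \to \infty$ for each fixed $\delta > 0$ if and only if the right-hand integral does, which is item 2. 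This settles both implications at once.

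The computation is essentially mechanical; the only points deserving a word of care are that all integrands are nonnegative, so every integral is well defined in $[0,+\infty]$ and the hypothesis that ``each side makes sense'' in Lemma \ref{LemFolland} is met, and that the passage between $\rho_n$ and $\tilde{\rho}_n$ through the integration formula is an \emph{exact} identity with strictly positive constant $Q c_B$, which is what upgrades each half into a genuine equivalence rather than a one-sided bound. There is no substantial obstacle here beyond this bookkeeping.
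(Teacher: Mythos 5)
Your proof is correct and takes essentially the same route as the paper: the paper also reduces both conditions to Lemma \ref{LemFolland} with the homogeneous function $\n{x}^{-Q}$ and the constant $M(\n{x}^{-Q}) = Qc_B$, writing $\rho_n(x) = \n{x}^{-Q}\,\tilde{\rho}_n(\n{x})\,\n{x}^{Q}$ and noting that condition ii) ``can be handled in the same way'' via the cut-off over $\G \smallsetminus B(0,\delta)$, exactly as you do. Your remarks on nonnegativity and on the identity being exact (hence yielding a genuine equivalence) are sound bookkeeping that the paper leaves implicit.
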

\begin{proof}
As for $1.$, by direct computation, using Lemma \ref{LemFolland}, we
obtain
\begin{displaymath}
\int_\G \rho_n(x) dx = \int_\G \frac{1}{\n{x}^Q}
\tilde{\rho}_n(\n{x}) \n{x}^Q dx = Q c_B \int_0^\infty
\tilde{\rho}_n(r) r^{Q-1} dr\ .
\end{displaymath}
Assertion $2.$ can be handled in the same way.
\end{proof}

As customary when dealing with radial functions, with
abuse of notation we will denote a radial mollifier $\rho_n$ as
$\rho_n(\n{x})$ without distinguish $\rho$ and $Qc_B\tilde{\rho}$.
Moreover, we will make also use of their
1-dimensional counterpart: mollifiers $\rho_n^{(1)} : \R^+
\longrightarrow \R^+$ satisfying
\begin{equation}\label{Onedimmoll}
\int_0^\infty \rho_n^{(1)}(\tau) d\tau = 1 \qquad \int_\delta^\infty
\rho_n^{(1)}(\tau) d\tau \stackrel{n \rightarrow
\infty}{\longrightarrow} \ 0 \fa \delta > 0
\end{equation}
which, due to the preceding corollary, are in 1-1 correspondence
with the radial mollifiers of Definition \ref{Radmol} by
\begin{equation}\label{1toQ}
\rho_n^{(1)}(r) = Q c_B \rho_n(r) r^{Q-1}\ .
\end{equation}

Before stating the main result of this section, we prove
two basic lemmata which will be crucial in the proof. The first
lemma is a uniformity result concerning the convergence of the limit
(\ref{Plim}) in the case of functions with continuous derivatives,
and the second lemma is a direct consequence of Lemma
\ref{LemFolland} which provides the exact constant for Theorem
\ref{main}.

Theorem \ref{main}, that is the counterpart in Carnot groups of
\cite[Theorem 2]{BBM}, however proved following \cite{B}, gives a
way to characterize Sobolev spaces and estimate their norms
(\ref{sobnorm}) by means of integral approximations: the local
structure, expressed by an integral norm of the differential, can be
replaced by a finite scale evaluation of function's oscillations,
which is shown to be an equivalent condition. This finite scale is
governed by the radial mollifiers, and in the limit of its
becoming tiny, the differential formulation is recovered.

We stress the fact that the proof of the sharp result of convergence
strongly relies on the assumption of rotational invariance for the
homogeneous norm (see the proof of Lemma \ref{LemFollandLike}).
\begin{lem}\label{LemUnif}
If $f \in \mathcal{C}_\G^1(\G,\R)$ then
\begin{equation}\label{omega}
\omega_x(h) \doteq |f(x\cdot h) - f(x) - \prs{\nabla_X
f(x)}{\h{h}}{X}|
\end{equation}
is such that
\begin{eqnarray*}
i) & & \displaystyle{ \frac{\omega_x(h)}{\n{h}} \rightarrow
0}\ \ \ \; \ \ \textrm{as}\ \ h\to 0,\ \textrm{uniformly for x on compact sets}\ ;\\
ii) & & \displaystyle{ \frac{\omega_x(h)}{\n{h}} \leq C_K } \fa x
\in K\ \textrm{compact},\ \forall\ \n{h} <1\ .
\end{eqnarray*}
\end{lem}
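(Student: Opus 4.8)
The plan is to estimate $\omega_x(h)$ by integrating the horizontal gradient of $f$ along a length-minimizing horizontal curve joining $x$ to $x\cdot h$, and then to isolate the Pansu linear term as the contribution of the gradient frozen at $x$. Since $(\G,d_X)$ is geodesic, for each $h$ there is a horizontal curve $\sigma:[0,L]\to\G$, parametrized by arc length ($|\dot\sigma(s)|_{_X}=1$), with $\sigma(0)=x$, $\sigma(L)=x\cdot h$ and $L=d_X(x,x\cdot h)$. By left invariance of $d_X$ and the equivalence \eqref{equiv} of the homogeneous and Carnot--Carath\'eodory distances, $L=d_X(0,h)\le\lambda\n{h}$, and moreover $d_X(x,\sigma(s))\le s\le L$ for every $s$. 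Writing $\dot\sigma(s)=\sum_{i=1}^{m_1}a_i(s)X_i(\sigma(s))$, the $\mathcal{C}^1_\G$ regularity of $f$ (equivalently its P-differentiability, Theorem \ref{teoC1-Pdiff}) gives the chain rule $\frac{d}{ds}f(\sigma(s))=\sum_i a_i(s)(X_if)(\sigma(s))$ for a.e.\ $s$, so that by the fundamental theorem of calculus
\[
f(x\cdot h)-f(x)=\int_0^L\sum_{i=1}^{m_1}a_i(s)(X_if)(\sigma(s))\,ds.
\]

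The key step is to recognise the frozen contribution as the Pansu differential. Splitting $(X_if)(\sigma(s))=(X_if)(x)+\big[(X_if)(\sigma(s))-(X_if)(x)\big]$, the first part produces $\sum_i(X_if)(x)\int_0^L a_i(s)\,ds$. Here I would use that in canonical coordinates the weight-$1$ block of each horizontal field $X_i$ is the identity (equivalently, that the projection onto the horizontal coordinates $h\mapsto\h{h}$ is a homogeneous homomorphism onto $(\R^{m_1},+)$, cf.\ Remark \ref{RemHom}): this forces $\frac{d}{ds}\sigma_i(s)=a_i(s)$ for $i\le m_1$, whence $\int_0^L a_i(s)\,ds=h_i$ and the frozen contribution equals exactly $\sum_{i\le m_1}(X_if)(x)\,h_i=\prs{\nabla_Xf(x)}{\h{h}}{X}$. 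This identity is the heart of the argument and the point where the stratified structure of $\G$ enters; I expect it to be the main obstacle, since it is what guarantees that the geodesic reproduces precisely the linear term appearing in \eqref{omega}, with no spurious contribution from the higher layers.

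Consequently $\omega_x(h)\le\int_0^L\sum_i|a_i(s)|\,|(X_if)(\sigma(s))-(X_if)(x)|\,ds$. Since $\sum_i|a_i(s)|\le\sqrt{m_1}\,|\dot\sigma(s)|_{_X}=\sqrt{m_1}$ by Cauchy--Schwarz, and $\sigma(s)$ stays in $\{y:d_X(x,y)\le L\}$, I would bound this by $\sqrt{m_1}\,L\,\eta_K(L)$, where $\eta_K(\delta)=\max_i\sup\{|(X_if)(y)-(X_if)(x)|:x\in K,\ d_X(x,y)\le\delta\}$ is the joint modulus of continuity of the horizontal derivatives on a fixed compact neighbourhood of $K$. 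Because each $X_if$ is continuous, hence uniformly continuous on that compact set, $\eta_K(\delta)\to0$ as $\delta\to0$. Using $L\le\lambda\n{h}$ and the monotonicity of $\eta_K$ then gives $\omega_x(h)/\n{h}\le\sqrt{m_1}\,\lambda\,\eta_K(\lambda\n{h})$, which tends to $0$ uniformly for $x\in K$ as $h\to0$, proving i), and is bounded by $\sqrt{m_1}\,\lambda\,\eta_K(\lambda)=:C_K$ for all $\n{h}<1$, proving ii). The only care needed is to enlarge $K$ to a compact set containing all the geodesics involved, which is harmless since $\n{h}<1$ confines them to a bounded region.
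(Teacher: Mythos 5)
Your proof is correct, but it takes a genuinely different route from the paper's. The paper's argument is a two-line application of the Mean Value Theorem to $\varphi(t)=f(x\cdot\delta_t(h))$: it produces $f(x\cdot h)-f(x)=\prs{\nabla_Xf(x\cdot\delta_{t^*}(h))}{\h{h}}{X}$ for some $t^*\in(0,1)$, hence $\omega_x(h)/\n{h}\le c\,|\nabla_Xf(x\cdot\delta_{t^*}(h))-\nabla_Xf(x)|_{_X}$ with $c$ such that $|\h{h}|_{\R^{m_1}}\le c\n{h}$, and concludes by uniform continuity of $\nabla_Xf$ on compact sets --- the same endgame as yours, but with the intermediate point sitting on the dilation path rather than along a geodesic. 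Your replacement of the MVT by an integral representation along a CC geodesic is sound, and the step you flagged as the ``main obstacle'' resolves exactly as you predicted: by homogeneity, in canonical coordinates the weight-one component of $X_i$ is $\partial_{x_i}$ (the coefficients of the weight-one $\partial_{x_j}$ are homogeneous of degree zero, hence constant), and first-layer coordinates of products add (Remark \ref{RemHom}), so indeed $\int_0^L a_i(s)\,ds=h_i$ with no contribution from higher layers. Two facts you assert deserve a word of justification: the geodesic is in general only Lipschitz, so $\dot\sigma(s)=\sum_i a_i(s)X_i(\sigma(s))$ and your chain rule hold only a.e., and the fundamental theorem of calculus needs $f\circ\sigma$ to be absolutely continuous --- standard for $f\in\mathcal{C}^1_\G$ along horizontal curves (see e.g. \cite{MSC}), and in fact easy here: at a.e. $s$ the increment $k_\epsilon=\sigma(s)^{-1}\cdot\sigma(s+\epsilon)$ satisfies $\n{k_\epsilon}\le\lambda|\epsilon|$ and $\h{k_\epsilon}=\epsilon\,a(s)+o(\epsilon)$, so pointwise P-differentiability (Theorem \ref{teoC1-Pdiff}) already yields the a.e.\ derivative. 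This is precisely where your detour buys something: along the paper's path $t\mapsto x\cdot\delta_t(h)$, which is not horizontal when $\nh{h}\neq 0$, the increment $\delta_t(h)^{-1}\cdot\delta_{t+\epsilon}(h)$ has homogeneous norm of order $|\epsilon|^{1/k}$, so differentiating $\varphi$ requires the locally uniform differentiability of $\mathcal{C}^1_\G$ functions, which the paper absorbs into ``well known'' facts; along a horizontal curve the increment is $O(|\epsilon|)$ and pointwise P-differentiability suffices. In exchange, the paper's proof is shorter and avoids geodesics and a.e.\ machinery altogether. As a bonus, your computation gives the quantitative form $\omega_x(h)\le\sqrt{m_1}\,\lambda\,\n{h}\,\eta_K(\lambda\n{h})$, a stratified Taylor-type inequality in the spirit of \cite{FS}, which is slightly more than the lemma asks.
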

\begin{proof}
By Theorem \ref{teoC1-Pdiff}, $f$ is P-differentiable. Now if
$\varphi(t) = f(x \cdot \delta_t(h))$, then by the Mean Value Theorem there exists a $t^* \in (0,1)$ such that
$$
f(x \cdot h) - f(x) = \varphi(1) - \varphi(0) = \frac{d}{dt}
\varphi(t)\bigg|_{t=t^*} = \prs{\nabla_Xf(x \cdot
\delta_{t^*}(h))}{\h{h}}{X}
$$
where the last transition holds due to the horizontality of the
Pansu differential. We then get
\begin{eqnarray*}
\omega_x(h) & = & |\prs{\left(\nabla_Xf(x \cdot \delta_{t^*}(h)) -
\nabla_Xf(x)\right)}{\h{h}}{X}|\\
& \leq & |\h{h}|_{\R^{m_1}} |\nabla_Xf(x \cdot \delta_{t^*}(h)) -
\nabla_Xf(x)|_{_X}
\end{eqnarray*}
which gives
$$
\frac{\omega_x(h)}{\n{h}} \leq c |\nabla_Xf(x \cdot \delta_{t^*}(h))
- \nabla_Xf(x)|_{_X}
$$
where $c$ is such that $|\h{h}|_{\R^{m_1}} < c \n{h}$, see e.g. \cite[Lemma 1.3]{F}.

The lemma is then proved by the hypothesis of $f$ being a $\mathcal{C}_\G^1$ function.
\end{proof}

\begin{lem}\label{LemFollandLike}
Let $\G$ be a Carnot group, $\{\rho_n\}_{n > 0}$ a family of radial
mollifiers as in Definition \ref{Radmol} and $\n{.}$ a homogeneous
norm invariant under horizontal rotations. Call
\begin{equation}\label{FollandLike}
\kappa_n = \int_{B(0,1)}
\frac{|\langle\hat{v},\h{x}\rangle|^p}{\n{x}^p} \moll{x}dx
\end{equation}
with $\hat{v}$ a unit vector of $\R^{m_1}$ and $\langle. ,  .
\rangle$
the Euclidean scalar product on $\R^{m_1}$.\\
Then
\begin{enumerate}
\item[i)] for all $n>0$, $\kappa_n$ does not depend on
$\hat{v}$. In particular
$$
\kappa_n = (p+Q) \int_{B(0,1)} |\langle\hat{e_1},\h{x}\rangle|^pdx
\int_0^1 \rho_n(r) r^{Q-1} dr
$$
where $\hat{e_1}$ stands for the first unit vector of the standard
basis of $\R^m$;
\item[ii)] $\kappa_n \to \kappa \doteq \displaystyle{\frac{(p+Q)}{Q c_B} \int_{B(0,1)}
|\langle\hat{e_1},\h{x}\rangle|^pdx}$ \ as $n \to \infty$ \ .
\end{enumerate}
\end{lem}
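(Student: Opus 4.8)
The plan is to prove (i) in two stages—first the independence of $\kappa_n$ from the direction $\hat v$, then its explicit evaluation through Lemma \ref{LemFolland}—and to obtain (ii) by passing to the limit using the defining properties of the radial mollifiers.

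First I would reduce an arbitrary unit vector $\hat v \in \R^{m_1}$ to $\hat{e_1}$. Pick $A \in O(m_1,\R)$ with $A\hat{e_1} = \hat v$ and carry out the $\G$-change of basis $\phi^A$ of (\ref{G-change}). This is exactly the point at which rotational invariance of the norm is used: since $A$ is orthogonal, $\phi^A$ preserves $\n{.}$, hence maps $B(0,1)$ onto itself and leaves the radial weight $\moll{x}$ unchanged, while $|\det A| = 1$ makes it measure preserving by part 3 of Corollary \ref{Changevar}. Writing $\langle \hat v, \h x\rangle = \langle A\hat{e_1}, \h x\rangle = \langle \hat{e_1}, A^{-1}\h x\rangle$ and substituting $\h y = A^{-1}\h x$, $\nh y = \nh x$, the integral defining $\kappa_n$ for $\hat v$ is carried term by term onto the one for $\hat{e_1}$, giving $\kappa_n(\hat v) = \kappa_n(\hat{e_1})$.

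Next I would evaluate $\kappa_n(\hat{e_1})$. Factoring a power $\n{x}^Q$ out of the weight shows that $f(x) = |\langle\hat{e_1},\h x\rangle|^p/\n{x}^{p+Q}$ is homogeneous of degree $-Q$, so that $\kappa_n = \int_\G f(x)\, g(\n{x})\, dx$ with $g(r) = \rho_n(r)\, r^Q \chi_{[0,1]}(r)$; Lemma \ref{LemFolland} then yields $\kappa_n = M(f)\int_0^1 \rho_n(r) r^{Q-1}\, dr$. To pin down $M(f)$ I would run (\ref{FollandInt}) once more with the test profile $g(r) = r^{p+Q}\chi_{[0,1]}(r)$: the left-hand side collapses to $\int_{B(0,1)} |\langle\hat{e_1},\h x\rangle|^p\, dx$ and the right-hand side is $M(f)/(p+Q)$, whence $M(f) = (p+Q)\int_{B(0,1)} |\langle\hat{e_1},\h x\rangle|^p\, dx$, which is the asserted formula.

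Finally, in (ii) the geometric factor in front is independent of $n$, so it suffices to let $n\to\infty$ in $\int_0^1 \rho_n(r) r^{Q-1}\, dr$. Splitting $\int_0^1 = \int_0^\infty - \int_1^\infty$ and using the two radial-mollifier conditions—the normalization $\int_0^\infty \rho_n(r) r^{Q-1}\, dr = 1/(Qc_B)$ and the decay $\int_\delta^\infty \rho_n(r) r^{Q-1}\, dr \to 0$ at $\delta = 1$—gives $\int_0^1 \rho_n(r) r^{Q-1}\, dr \to 1/(Qc_B)$ and hence $\kappa_n \to \kappa$. The only genuinely delicate step is the first: without rotational invariance of $\n{.}$ the map $\phi^A$ would distort both $B(0,1)$ and the radial weight $\moll{x}$, the reduction of a general direction to $\hat{e_1}$ would break down, and with it the direction-independence that produces the sharp constant $\kappa$.
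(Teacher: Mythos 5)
Your proposal is correct and takes essentially the same route as the paper: both arguments rest on Lemma \ref{LemFolland} applied with the profiles $g(r)=\rho_n(r)r^Q\chi_{[0,1]}(r)$ and $g(r)=r^{p+Q}\chi_{[0,1]}(r)$ to extract $M(f)=(p+Q)\int_{B(0,1)}|\langle\hat{v},\h{x}\rangle|^p dx$, and on an orthogonal $\G$-change of basis combined with the rotational invariance of $\n{.}$ (and of the measure, Corollary \ref{Changevar}) to remove the dependence on $\hat{v}$, with (ii) following from the mollifier normalization and tail conditions. The only, immaterial, difference is the order of the two steps: you rotate the full mollified integral $\kappa_n$ at the outset, whereas the paper first factors out $M(f)$ via Lemma \ref{LemFolland} and then proves direction-independence of the ball integral $\int_{B(0,1)}|\langle\hat{v},\h{x}\rangle|^p dx$.
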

\begin{proof}
Point $ii)$ is straightforward from $i)$ and the definition of
radial mollifiers. To prove $i)$ we make use of Lemma
\ref{LemFolland}:
\begin{eqnarray*}
\kappa_n & = & \int_\G
\left(\frac{|\langle\hat{v},\h{x}\rangle|^p}{\n{x}^{p+Q}}\right)
\moll{x}\n{x}^Q \chi_{[0,1]}(\n{x})dx\\
& = &
M\left(\frac{|\langle\hat{v},\h{x}\rangle|^p}{\n{x}^{p+Q}}\right)\int_0^1
\rho_n(r)r^{Q-1}dr \ .
\end{eqnarray*}
On the other hand,
\begin{eqnarray*}
\int_{B(0,1)} |\langle\hat{v},\h{x}\rangle|^p dx \!\!& = &\!\! \int_\G
\left(\frac{|\langle\hat{v},\h{x}\rangle|^p}{\n{x}^{p+Q}}\right)
\n{x}^{p+Q} \chi_{[0,1]}(\n{x})dx\\
& = &\!\!
M\left(\frac{|\langle\hat{v},\h{x}\rangle|^p}{\n{x}^{p+Q}}\right)
\int_0^1 r^{p+q-1}dr =
\frac{1}{p+Q}M\left(\frac{|\langle\hat{v},\h{x}\rangle|^p}{\n{x}^{p+Q}}\right)
\end{eqnarray*}
so that
$$
M\left(\frac{|\langle\hat{v},\h{x}\rangle|^p}{\n{x}^{p+Q}}\right) =
(p+Q)\int_{B(0,1)} |\langle\hat{v},\h{x}\rangle|^pdx\ .
$$
This expression does not depend on $\hat{v}$, since by an orthogonal
$\G$-change of basis (\ref{G-change}) which does not alter the
measure (Corollary \ref{Changevar}) nor the homogeneous norm (so
that also the domain of integration does not change), it is possible
to choose any other unitary vector of $\R^{m_1}$ by rotation: set $A
\in O(m_1,\R)$ and call $\hat{w} = A^T\hat{v}$, then
\begin{eqnarray*}
\int_{B(0,1)} |\langle\hat{v},\h{x}\rangle|^p dx & = & \int_{B(0,1)}
|\langle\hat{v},\h{\phi^A(x)}\rangle|^p dx =
\int_{B(0,1)} |\langle\hat{v},A\h{x}\rangle|^p dx\\
& = & \int_{B(0,1)} |\langle\hat{w},\h{x}\rangle|^p dx\ .
\end{eqnarray*}
\end{proof}

\begin{teo}\label{main}
Let $\G$ be a Carnot group, $\n{.}$ a homogeneous norm
invariant under horizontal rotations and $\{\rho_n\}_{n > 0}$ a family of radial
mollifiers as in Definition \ref{Radmol}. Let then $f \in L^p(\G)$
be given, with $1<p<\infty$. If in addition
\begin{equation}\label{condition}
I_n = \int_\G\int_\G \frac{|f(y) - f(x)|^p}{\n{x^{-1}\cdot y}^p}
\moll{x^{-1}\cdot y} dx dy \leq C \fa n > n_0
\end{equation}
then
\begin{eqnarray*}
i) & & f \in W^{1,p}_\G(\G)\ ;\\
ii) & & I_n \longrightarrow \displaystyle{\kappa \int_\G |\nabla_X f
(x)|^p_{_X} dx} \ \ \textrm{as} \ \, n \to \infty \ ;
\end{eqnarray*}
where $\kappa$ is the constant given by Lemma \ref{LemFollandLike}:
$$
\kappa = \frac{(p+Q)}{Q c_B} \int_{B(0,1)}
|\langle\hat{e_1},\h{x}\rangle|^pdx\ .
$$
\end{teo}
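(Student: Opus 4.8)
The plan is to adapt the method of \cite{B}. By the left-invariance and unimodularity of the Haar measure (Corollary \ref{Changevar}), the substitution $h=x^{-1}\cdot y$ in \eqref{condition} removes the dependence on $y$ and gives, writing $I_n[u]$ for the functional \eqref{condition} evaluated at a function $u$,
\[
I_n[f]=\int_\G\Big[\int_\G \frac{|f(x\cdot h)-f(x)|^p}{\n{h}^p}\moll{h}\,dh\Big]\,dx .
\]
I would first settle the case $f\in\mathcal{C}^\infty_c(\G)\subset\mathcal{C}^1_\G$. Decomposing the difference into its linear part $\prs{\nabla_X f(x)}{\h h}{X}$ plus a remainder bounded by $\omega_x(h)$ of \eqref{omega}, and writing $\nabla_X f(x)=|\nabla_X f(x)|_{_X}\hat v$, the leading contribution to the inner integral is $|\nabla_X f(x)|^p_{_X}\int_\G\frac{|\langle\hat v,\h h\rangle|^p}{\n h^p}\moll{h}\,dh$. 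This integral splits as $\kappa_n$ (the part over $B(0,1)$, which by Lemma \ref{LemFollandLike} is independent of $\hat v$ and converges to $\kappa$) plus a tail over $\G\smallsetminus B(0,1)$ that vanishes as $n\to\infty$ by the mollifier property. The remainder is handled with Lemma \ref{LemUnif}: on a small ball $\omega_x(h)/\n h$ is uniformly small, outside it the mollifier mass tends to $0$, so the inner integral converges pointwise to $\kappa|\nabla_X f(x)|^p_{_X}$. The uniform bound $\omega_x(h)/\n h\le C_K$ of Lemma \ref{LemUnif} together with the compact support of $f$ supplies an $L^1(\G)$ dominating function, and dominated convergence gives $I_n[f]\to\kappa\int_\G|\nabla_X f|^p_{_X}\,dx$ in this case.

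Two estimates valid for every $u\in W^{1,p}_\G(\G)$ drive the passage to general $f$. The first is the uniform upper bound $I_n[u]\le c^p\int_\G|\nabla_X u|^p_{_X}\,dx$: writing (for smooth $u$, then by density) $u(x\cdot h)-u(x)=\int_0^1\prs{\nabla_X u(x\cdot\delta_t(h))}{\h h}{X}\,dt$ as in the proof of Lemma \ref{LemUnif}, applying Jensen in $t$, using $|\h h|_{\R^{m_1}}\le c\n h$, and integrating first in $x$ by left-invariance and then the unit-mass mollifier in $h$. The second is the convolution inequality $I_n[u_\varepsilon]\le I_n[u]$, where $u_\varepsilon(x)=\int_\G u(z^{-1}\cdot x)\,\psi_\varepsilon(z)\,dz$ is a \emph{left} convolution against a smooth approximate identity. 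Here the non-commutativity of $\G$ is decisive: Jensen in $z$ produces $\int\psi_\varepsilon(z)\int\int\frac{|u(z^{-1}y)-u(z^{-1}x)|^p}{\n{x^{-1}\cdot y}^p}\moll{x^{-1}\cdot y}\,dx\,dy\,dz$, and the substitution $x=z\cdot x'$, $y=z\cdot y'$ works \emph{only} because the homogeneous distance is left-invariant, so that $\n{(z x')^{-1}(z y')}=\n{x'^{-1}y'}$ and the inner integral returns $I_n[u]$. This is precisely the place where the Euclidean arguments of \cite{Po,B} must be rebuilt.

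With these tools \emph{i)} follows. Each $f_\varepsilon$ is smooth, so by the first paragraph $\lim_n I_n[f_\varepsilon]=\kappa\int_\G|\nabla_X f_\varepsilon|^p_{_X}\,dx$, while $I_n[f_\varepsilon]\le I_n[f]\le C$ for $n>n_0$; hence $\int_\G|\nabla_X f_\varepsilon|^p_{_X}\,dx\le C/\kappa$ uniformly in $\varepsilon$. Since $f_\varepsilon\to f$ in $L^p$ and the horizontal gradients are bounded in $L^p$, reflexivity (here $1<p<\infty$ is used) yields a weakly convergent subsequence whose limit is identified with the distributional $\nabla_X f$, so $f\in W^{1,p}_\G(\G)$. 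The same convolution inequality gives the lower half of \emph{ii)}: from $I_n[f]\ge I_n[f_\varepsilon]$ and the smooth case, $\liminf_n I_n[f]\ge\kappa\int_\G|\nabla_X f_\varepsilon|^p_{_X}\,dx$; letting $\varepsilon\to0$ and using that each $X_j$ commutes with left convolution, so that $\nabla_X f_\varepsilon\to\nabla_X f$ in $L^p$, we obtain $\liminf_n I_n[f]\ge\kappa\int_\G|\nabla_X f|^p_{_X}\,dx$.

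For the upper half of \emph{ii)} I would invoke the density of $\mathcal{C}^\infty_c(\G)$ in $W^{1,p}_\G(\G)$ recalled after \eqref{sobnorm}. Regarding $I_n[u]^{1/p}$ as the norm of $(x,y)\mapsto(u(y)-u(x))/\n{x^{-1}\cdot y}$ in $L^p$ of the measure $\moll{x^{-1}\cdot y}\,dx\,dy$, Minkowski's inequality gives $I_n[f]^{1/p}\le I_n[g]^{1/p}+I_n[f-g]^{1/p}$ for every smooth $g$. Choosing $g$ with $\int_\G|\nabla_X(f-g)|^p_{_X}\,dx<\varepsilon^p$, the uniform bound controls $I_n[f-g]^{1/p}\le c\,\varepsilon$ for all $n$, while the smooth case gives $\limsup_n I_n[g]^{1/p}=\kappa^{1/p}\big(\int_\G|\nabla_X g|^p_{_X}\,dx\big)^{1/p}$; sending $\varepsilon\to0$ then yields $\limsup_n I_n[f]\le\kappa\int_\G|\nabla_X f|^p_{_X}\,dx$, completing \emph{ii)}. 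I expect the main obstacle to be the convolution inequality of the second paragraph, namely justifying the change of variables and isolating exactly where left-invariance (in the absence of bi-invariance) is what makes it go through; a secondary technical point is producing a single $L^1$ dominating function in the smooth case, for which the rotational invariance of $\n{\cdot}$ (via the $\hat v$-independence in Lemma \ref{LemFollandLike}) and the uniform estimate in Lemma \ref{LemUnif} are both essential.
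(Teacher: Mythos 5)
Your proposal is correct and follows the same route as the paper: this is the Brezis-style two-sided estimate, with the smooth case resting on exactly the two lemmas the paper isolates --- Lemma \ref{LemUnif} for the remainder $\omega_x(h)$ (split into a small ball where $\omega_x(h)/\n{h}$ is uniformly small and a region where the mollifier mass vanishes) and Lemma \ref{LemFollandLike} for the constant $\kappa_n\to\kappa$, whose independence of $\hat v$ is precisely where the horizontal-rotation invariance of $\n{\cdot}$ enters, as you note. Where you differ is in explicitness rather than strategy: the paper compresses the passage from smooth functions to general $f$ into appeals to density \cite{FSSC_MS}, whereas you spell out the machinery such an appeal actually requires --- the left-convolution inequality $I_n[u_\varepsilon]\le I_n[u]$ via Jensen and the left-invariance of the homogeneous distance and of the Haar measure (your identification of left-invariance as the non-Euclidean crux is exactly right, and your change of variables goes through), weak compactness in $L^p$ with $1<p<\infty$ to identify the distributional horizontal gradient for part \emph{i)}, and Minkowski's inequality combined with the uniform bound $I_n[u]\le c^p\int_\G|\nabla_X u|^p_{_X}dx$ for the limsup half; that uniform bound is the paper's inequality (\ref{stronger}), which it states and proves as a separate proposition immediately after the theorem.

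One small repair is needed. In your third paragraph you invoke the full limit of the first paragraph for $f_\varepsilon$, but $f_\varepsilon$ is smooth without compact support, while your dominated-convergence argument there used compact support to produce the $L^1$ majorant. This is harmless because at that point only the lower (liminf) half is needed, and that half holds for every $\mathcal{C}^1_\G$ function with no support restriction: one works on a compact set $K$, where Lemma \ref{LemUnif} gives the uniform control, and then lets $K$ exhaust $\G$ --- which is exactly how the paper formulates and proves (\ref{below}). With that adjustment your argument is complete, and indeed somewhat more detailed than the paper's own write-up at the density steps.
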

\begin{proof}
If we set $h = x^{-1}\cdot y$ we can write
$$
I_n = \int_\G\int_\G \frac{|f(x\cdot h) - f(x)|^p}{\n{h}^p} \moll{h}\,
dh\, dx\ .
$$
The proof can now be carried out in two steps, producing
estimates from above and below. The first step consists in
showing that for any $f \in \mathcal{C}_\G^1(\G)$
\begin{equation}\label{below}
\kappa \int_\G |\nabla_Xf(x)|_{_X}^pdx \leq \liminf_{n \to \infty}
\int_\G\int_\G\frac{|f(x\cdot h)-f(x)|^p}{\n{h}^p}\moll{h} dh dx\ .
\end{equation}
Such a result for $\mathcal{C}_\G^1(\G)$ extends then to any $f \in L^p(\G)$ by density \cite{FSSC_MS}.
Relation (\ref{below}) provides one half of $ii)$ and, together
with (\ref{condition}), it implies $i)$.\\
On the other hand, the second step of the proof will be the
complementary estimate: for any $f \in \mathcal{C}_{0_\G}^1(\G)$ it holds
\begin{equation}\label{above}
\kappa\int_\G |\nabla_X f(x)|_{_X}^p dx \geq \limsup_{n \to \infty}
\int_\G\int_\G \frac{|f(x\cdot h) - f(x)|^p}{\n{h}^p}\moll{h}dh dx\ .
\end{equation}
This together with (\ref{below}) yields, for any $f \in
\mathcal{C}_{0_\G}^1(\G)$,
$$
\lim_{n \to \infty} \int_\G\int_\G \frac{|f(x\cdot h) -
f(x)|^p}{\n{h}^p}\moll{h} dh dx = \kappa\int_\G |\nabla_X
f(x)|_{_X}^p dx
$$
which extends by density \cite{FSSC_MS} to any $f \in
W^{1,p}_\G(\G)$.

We now prove (\ref{below}). In order to do that, let us first prove
the following preliminary claim: let $K$ be a compact set, $B \equiv
B(0,1)$ and $\kappa_n$ be the same as in Lemma \ref{LemFollandLike},
then
\begin{equation}\label{preclaim}
\int_Kdx\int_Bdh
\frac{|\prs{\nabla_Xf(x)}{h}{X}|^p}{\n{h}^p}\moll{h} = \kappa_n
\int_K dx |\nabla_Xf(x)|_{_X}^p\ .
\end{equation}
Indeed, since it is sufficient to restrict to compact sets $K$ where
$\nabla_Xf(x) \neq 0$,
\begin{eqnarray*}
\int_Kdx\int_Bdh&&\!\!\!\!\!\!\!\!\!\!\!\!\!\!
\frac{|\prs{\nabla_Xf(x)}{\h{h}}{X}|^p}{\n{h}^p}\moll{h}\\
& = & \int_Kdx |\nabla_Xf(x)|_{_X}^p \int_Bdh
\frac{|\prs{\nu_f(x)}{\h{h}}{X}|^p}{\n{h}^p}\moll{h}
\end{eqnarray*}
where $\displaystyle{\nu_f(x) = \frac{\nabla_Xf(x)\phantom{x}}{|\nabla_Xf(x)|_{_X}}}$
is a unit norm horizontal vector, so that this expression
corresponds to that of Lemma \ref{LemFollandLike}. In particular
$\kappa_n$ does not depend on $x$, as a result of the rotational
invariance shown in the lemma. Claim
(\ref{preclaim}) is is then proved.

Now, using the notation of (\ref{omega}), Lemma \ref{LemUnif}, we
get
$$
|\prs{\nabla_X f(x)}{\h{h}}{X}| \leq |f(x \cdot h) - f(x)| +
\omega_x(h)
$$
then for any $p>1, \theta > 0$ there exists a $C_{p,\theta}>0$\ \
such that
$$
|\prs{\nabla_X f(x)}{\h{h}}{X}|^p \leq (1+\theta)|f(x \cdot h) -
f(x)|^p + C_{p,\theta}\omega_x(h)^p\ .
$$
Combining this relation with (\ref{preclaim}), with $\theta$
arbitrarily fixed, we get
\begin{eqnarray*}
\kappa_n \displaystyle{\int_K dx} |\nabla_Xf(x)|_{_X}^p & \leq &
\displaystyle{(1+\theta)\int_K dx\int_B dh
\ \frac{|f(x \cdot h) - f(x)|^p}{\n{h}^p}\moll{h}}\\
&& + \ C_{p,\theta} \int_K dx\int_B dh
\frac{\omega_x(h)^p}{\n{h}^p}\moll{h}\ .
\end{eqnarray*}
Thus (\ref{below}) will follow after proving that the second term on
the right expression vanishes as $n \to \infty$. To see this, we use
both the radial mollifier property of vanishing tails and the
behavior of the rest given by Lemma \ref{LemUnif}.

For any $\delta \in (0,1)$ we can split the integral into two parts:
\begin{equation}\label{neglect}
\int_K dx \int_{B} dh \frac{\omega_x(h)^p}{\n{h}^p} \moll{h} = 
\int_K \int_{B(0,\delta)} + \int_K \int_{B\smallsetminus
B(0,\delta)} = J_n^{(1)} + J_n^{(2)}\ .
\end{equation}
We can see that $J_n^{(1)}$ is arbitrarily small for any $\delta$
sufficiently small, thanks to the uniform convergence of
$\omega_x(h)$, while for $J_n^{(2)}$ we can control $\omega_x(h)$
and use the tail property of $\moll{h}$. More precisely, by Lemma \ref{LemUnif}
\begin{itemize}
\item[(1)] for all $\eta > 0$ there exists \ $\overline{\delta}(\eta,K)$
\ such that\ $\displaystyle{\omega_x(h) <
\left(\frac{\eta}{2|K|}\right)^{1/p}\n{h}}$\\
for all $\n{h}<\overline{\delta}(\eta,K)$ and all $x \in K$. We then have\vspace{3pt}\\
$J_n^{(1)}\ <\ \displaystyle{\frac{\eta}{2|K|}
|K|\int_{B(0,\delta)}\moll{h}}dh\ < \
\displaystyle{\frac{\eta}{2}}\fa \delta < \overline{\delta}(\eta,K)\
\ \forall\ n$;
\item[(2)] $\omega_x(h)\ \leq\ C_K^{1/p}\ \n{h}$ for all $x\in K$.
This yields\vspace{3pt}\\
$J_n^{(2)} \ \leq\ C_K |K| \displaystyle{\int_{B(0,1)\smallsetminus
B(0,\delta)}
\moll{h} dh} \fa \delta \in (0,1)$\vspace{3pt}\\
and the integral over the annulus can be made arbitrarily small for
$n$ large due to the tail property of
$\rho_n$ (Definition \ref{Radmol}), so that\vspace{3pt}\\
$J_n^{(2)} \ \leq\ C_K |K|\ \displaystyle{\frac{\eta}{2C_K|K|}}\ =\
\displaystyle{\frac{\eta}{2}} \fa n > \overline{n}(\delta,\eta, K)$\
.
\end{itemize}
Thus, for any fixed $\eta > 0$, there exists an $\overline{n}$
depending only on $\eta$ and on the compact set $K$ such that the
integral (\ref{neglect}) is smaller than $\eta$ provided $n >
\overline{n}$:
$$
\forall\ \eta > 0\ \ \exists \ \overline{n}=\overline{n}\
(\overline{\delta}(\eta,K),\eta,K)\ \ \ \textrm{such that}\ \ \ J_n^{(1)}
+ J_n^{(2)} < \eta \fa n > \overline{n}\ .
$$
We then end up with
$$
\kappa_n \int_K |\nabla_X f(x)|_{_X}^p dx \leq (1+\theta)
\int_K\!\!dx\int_B\!\!dh \frac{|f(x\cdot h) -
f(x)|^p}{\n{h}^p}\moll{h} + C_\theta \eta
$$
which is true for any $n$ sufficiently large. This provides
$$
\int_K |\nabla_X f(x)|_{_X}^p dx \leq (1+\theta) \liminf_{n \to
\infty} \frac{1}{\kappa_n}\int_\G\int_\G \frac{|f(x\cdot h) -
f(x)|^p}{\n{h}^p}\moll{h}dhdx
$$
for any arbitrary $\theta > 0$. Moreover, since the right term does
not depend on $K$ anymore, we can enlarge the integration domain on
the left up to the whole $\G$, so that (\ref{below}) is
proved.

We come now to the second estimate, proving
(\ref{above}) for $f \in \mathcal{C}_{0_\G}^1(\G)$. Again from
(\ref{omega}), using the triangular inequality in the other way, we
get
$$
|f(x\cdot h) - f(x)| \leq |\prs{\nabla_X f(x)}{\h{h}}{X}| +
\omega_x(h)
$$
so that for any $p>1, \theta > 0$ there exists a $C_{p,\theta}>0$\ \
such that
$$
|f(x\cdot h) - f(x)|^p \leq (1+\theta) |\prs{\nabla_X
f(x)}{\h{h}}{X}|^p + C_\theta \omega_x(h)^p\ .
$$
This means that, by integration on $\G$ and making use of claim
(\ref{preclaim})
$$
\int_\G\int_\G \frac{|f(x\cdot h) - f(x)|^p}{\n{h}^p}\moll{h}dhdx \leq
(1+\theta) \kappa_n \int_\G |\nabla_X f(x)|_{_X}^p dx + J_n
$$
where
$$
J_n = C_\theta\int_\G\int_\G \frac{\omega_x(h)^p}{\n{h}^p} \moll{h} dhdx
$$
can be handled as in (\ref{neglect}), since integration can be
actually restricted to the support of $f$. This means that $J_n$
vanishes as $n \to \infty$, thus proving (\ref{above}).
\end{proof}

Theorem \ref{main} actually provides an equivalent notion
of the Sobolev space $W^{1,p}_\G(\G)$: this can be seen by the
following proposition (see \cite[Theorem 1]{BBM}).
\begin{pro}
Let $f \in W^{1,p}_\G(\G)$, $1\leq p < \infty$, and $\rho:\G
\rightarrow \R$ a non\-ne\-ga\-tive function such that $\int_\G \rho(x) dx =
1$. Then
\begin{equation}\label{stronger}
\int_\G\int_\G dx dy \frac{|f(y) - f(x)|^p}{\n{x^{-1}\cdot
y}^p}\rho(x^{-1}\cdot y) \leq \int_\G |\nabla_Xf(x)|^p_{_X} dx
\end{equation}
\end{pro}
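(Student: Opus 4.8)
The plan is to follow the scheme of \cite[Theorem 1]{BBM}, adapting each step to the group structure. First I would reduce to smooth $f$: by the density of smooth functions in $W^{1,p}_\G(\G)$ \cite{FSSC_MS}, choose $f_k\to f$ in $W^{1,p}_\G(\G)$; passing to a subsequence, $f_k\to f$ a.e.\ on $\G$, hence $\frac{|f_k(y)-f_k(x)|^p}{\n{x^{-1}\cdot y}^p}\rho(x^{-1}\cdot y)\to\frac{|f(y)-f(x)|^p}{\n{x^{-1}\cdot y}^p}\rho(x^{-1}\cdot y)$ for a.e.\ $(x,y)$. By Fatou's lemma the left-hand side for $f$ is bounded by $\liminf_k$ of the left-hand sides for $f_k$, while the right-hand sides $\int_\G|\nabla_X f_k(x)|_{_X}^p\,dx$ converge to $\int_\G|\nabla_X f(x)|_{_X}^p\,dx$. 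It therefore suffices to prove the inequality for smooth $f$, where the finite difference can be written exactly.

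The decisive step is the representation of $f(x\cdot h)-f(x)$. Here the naive analogue of the Euclidean segment, namely the dilation path $t\mapsto x\cdot\delta_t(h)$ used in Lemma \ref{LemUnif}, must be avoided: this curve is \emph{not} horizontal when $h$ has nonzero components in higher layers (its velocity, read in the left-invariant frame, is $(\h{h},\,2t\,h^{(2)},\dots)$, so $\frac{d}{dt}f(x\cdot\delta_t(h))$ picks up derivatives of $f$ along higher layers, which cannot be controlled by $|\nabla_X f|_{_X}$). Instead I would join $\idg$ to $h$ by a constant-speed horizontal minimizer $\gamma_h$ of the CC distance, so that $|\dot\gamma_h(t)|_{_X}\equiv|h|_{CC}$; such a curve exists because $(\G,d_X)$ is geodesic. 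Left-translating, $t\mapsto x\cdot\gamma_h(t)$ is horizontal from $x$ to $x\cdot h$, and the fundamental theorem of calculus along a horizontal curve, combined with Cauchy--Schwarz and Jensen's inequality with respect to the probability measure $dt$ on $[0,1]$, yields
$$
|f(x\cdot h)-f(x)|^p \le |h|_{CC}^p \int_0^1 |\nabla_X f(x\cdot\gamma_h(t))|_{_X}^p\,dt .
$$

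Dividing by $\n{h}^p$, multiplying by $\rho(h)$ and integrating in $x$, the unimodularity of the Haar measure (Corollary \ref{Changevar}, part 1) makes $\int_\G |\nabla_X f(x\cdot\gamma_h(t))|_{_X}^p\,dx=\int_\G|\nabla_X f(x)|_{_X}^p\,dx$ independent of $t$ and of $h$ (the map $x\mapsto x\cdot\gamma_h(t)$ is a right translation). Integrating in $h$ and using $\int_\G\rho=1$ then gives
$$
\int_\G\int_\G \frac{|f(x\cdot h)-f(x)|^p}{\n{h}^p}\rho(h)\,dh\,dx \le \left(\int_\G \frac{|h|_{CC}^p}{\n{h}^p}\rho(h)\,dh\right)\int_\G |\nabla_X f(x)|_{_X}^p\,dx .
$$
Since $|h|_{CC}=d_X(\idg,h)\le\n{h}$ when $\n{.}=|.|_{CC}$ (and in general $|h|_{CC}\le\lambda\n{h}$ by (\ref{equiv})), the bracketed factor is $\le 1$, which is the asserted inequality (returning to $h=x^{-1}\cdot y$).

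The main obstacle is measure-theoretic rather than geometric: to apply Fubini one needs $\gamma_h$ to depend measurably on $h$, i.e.\ a measurable selection from the nonempty set of CC-minimizers. By homogeneity this can be arranged by fixing a (measurable) family of horizontal connections $\eta_z$ from $\idg$ to the points $z$ of the unit sphere $\{\n{z}=1\}$ and setting $\gamma_h=\delta_{\n{h}}\circ\eta_{\delta_{1/\n{h}}(h)}$, using that group dilations preserve horizontal curves and scale their length by $\n{h}$. A secondary point is sharpness: the clean constant $1$ reflects the bound $|\h{h}|\le|h|_{CC}$ for the horizontal projection, which is precisely what makes the inequality tight on $\G$-linear functions $f(\h{x})=\langle a,\h{x}\rangle$; for a homogeneous norm other than $|.|_{CC}$ one obtains only the estimate with the factor $\int_\G \n{h}^{-p}|h|_{CC}^p\,\rho(h)\,dh$ in place of $1$.
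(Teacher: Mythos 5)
Your proof is correct, and it takes a genuinely different route from the paper's --- in fact it quietly repairs it. The paper argues exactly along the dilation path you reject: it sets $v(t)=f(x\cdot\delta_t(h))$, asserts $\dot v(t)=\prs{\nabla_Xf(x\cdot\delta_t(h))}{\h{h}}{X}$ ``by the Pansu differential properties'', and then integrates and uses right-invariance just as you do, extracting the factor $\n{h}$ via the tacit bound $|\h{h}|_{\R^{m_1}}\le\n{h}$. Your objection to that path is well founded: the asserted derivative identity holds at $t=0$ (where it is precisely Pansu differentiability, since $\n{\delta_\epsilon(h)}=\epsilon\n{h}$), but for $t>0$ the increment $\delta_t(h)^{-1}\cdot\delta_{t+\epsilon}(h)$ has horizontal part $\epsilon\,\h{h}$ while its homogeneous norm is only $O(\epsilon^{1/k})$, so the $o(\n{\cdot})$ remainder swamps the linear term and vertical derivatives enter, exactly as your frame computation $(\h{h},2t\,h^{(2)},\dots)$ indicates. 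Concretely, in the Heisenberg group with $f$ a compactly supported function equal to $x_3$ near the origin, $x=0$ and $h$ vertical, one finds $\dot v(t)=2th_3\neq 0$ while $\prs{\nabla_Xf}{\h{h}}{X}=0$, and the paper's intermediate pointwise bound $|f(x\cdot h)-f(x)|\le\n{h}\int_0^1|\nabla_Xf(x\cdot\delta_t(h))|_{_X}dt$ actually fails there; your replacement of the dilation segment by a constant-speed CC minimizer is what makes the pointwise estimate true, at the price of the factor $|h|_{CC}$ in place of $|\h{h}|$. After the right-translation-invariance step (identical in both arguments) this yields the constant $1$ when $\n{\cdot}=|\cdot|_{CC}$, or more generally $\n{\cdot}\ge|\cdot|_{CC}$, and $\lambda^p$ otherwise --- a caveat you correctly flag and which is intrinsic to the statement, since replacing $\n{\cdot}$ by $c\n{\cdot}$ with $c$ small multiplies the left side of (\ref{stronger}) by $c^{-p}$ while the right side is unchanged, so constant $1$ cannot hold for every homogeneous norm; the paper's proof needs the analogous normalization $|\h{h}|_{\R^{m_1}}\le\n{h}$ at the same spot. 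Two minor simplifications: the measurable-selection point can be bypassed entirely, because for each fixed $h$ you may integrate in $x$ first (Tonelli, nonnegative integrand), and the resulting bound $\n{h}^{-p}|h|_{CC}^p\,\rho(h)\int_\G|\nabla_Xf|^p_{_X}$ does not depend on which geodesic was chosen, both sides then being measurable in $h$ by joint measurability of $(x,h)\mapsto f(x\cdot h)$; and your Fatou argument is the right way to make precise the paper's terse ``extends by density'', since the left-hand functional is only lower semicontinuous under $W^{1,p}_\G$ convergence, which is all Fatou requires.
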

\begin{proof}
We start with $f \in \mathcal{C}_\G^1(\G)$. Define, for $h \in \G$,
$v(t) = f(x\cdot \delta_t(h))$ and recall that, by making use of the
Pansu differential properties,
$$
\dot{v}(t) = \prs{\nabla_Xf(x\cdot\delta_t(h))}{\h{h}}{X}\ .
$$
Then
$$
|f(x\cdot h) - f(x)| = |\int_0^1
\prs{\nabla_Xf(x\cdot\delta_t(h))}{\h{h}}{X}dt| \leq
\n{h}\int_0^1 |\nabla_Xf(x \cdot \delta_t(h))|_{_X}
$$
so that
\begin{eqnarray*}
\int_{\G} |f(x\cdot h) - f(x)|^p dx & \leq & \n{h}^p\int_\G
dx \int_0^1 |\nabla_Xf(x \cdot \delta_t(h))|^p_{_X} dt\\
& = & \n{h}^p \int_\G |\nabla_Xf(x)|^p_{_X} dx\ .
\end{eqnarray*}
If we now multiply both sides by $\rho(h)/\n{h}^p$ and integrate
over $\G$ in $dh$ we obtain estimate (\ref{stronger}) for any $f \in
\mathcal{C}^1_\G(\G)$, so by density for any $f \in W^{1,p}_\G(\G)$.
\end{proof}


\section{The Poincar\'e-Ponce inequality\label{sec:PPsec}}

This section is devoted to extend to Carnot groups a
Poincar\'{e}-like inequality, introduced in \cite{Po}, where on the
right side stands the finite scale approximation of the $L^p$ norm
of the gradient described in the last section. The proof is
original, in the sense that a different technique with respect to
\cite{Po} is used, and shows how the finite scale of mollifiers
relates to a finite scale on the domain of evaluation. To this end,
we need an additional assumption on the mollifiers, namely we assume
the $\rho_n$ to be nonincreasing functions. This technical point was
already discussed in \cite{BBM} and \cite{Po}.

We start with a preliminary one dimensional lemma.
\begin{lem}[One dimensional inequality]\label{LEMonedim}
Let $f\in L^p_{loc}(\R)$ be given, $1\leq p < \infty$, and let
$\varphi:\R^+\rightarrow\R^+$ be a nonincreasing function in
$L^1_{loc}(\R^+)$. Then, setting $\overline{f} =
\displaystyle{\int_{-\frac{1}{2}}^{\frac{1}{2}} f(t)dt}$, it holds
\begin{equation}\label{onedimpoinc01}
\int_{-\frac{1}{2}}^{\frac{1}{2}} |f(t) - \overline{f}|^pdt \leq \frac{2}{\int_0^1
d\tau \rho(\tau)} \int_{-\frac{1}{2}}^{\frac{1}{2}} dt \int_{-\frac{1}{2}}^{\frac{1}{2}} ds
\frac{|f(t) - f(s)|^p}{|t - s|^p} \varphi(|t - s|)\ .
\end{equation}
\end{lem}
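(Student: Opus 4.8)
The plan is to strip off the mean value by convexity, reduce the weighted estimate to a one–parameter family of \emph{box} weights using that $\varphi$ is nonincreasing, and then prove a local Poincar\'e inequality by a chaining argument; the last step is the real work.

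First I would dispose of $\overline f$ by convexity. Since $[-\tfrac12,\tfrac12]$ has unit length, $\overline f$ is a genuine average and $f(t)-\overline f=\int_{-1/2}^{1/2}(f(t)-f(s))\,ds$, so Jensen's inequality for the probability measure $ds$ gives
\begin{equation*}
\int_{-1/2}^{1/2}|f(t)-\overline f|^p\,dt\ \le\ J:=\int_{-1/2}^{1/2}\!\!\int_{-1/2}^{1/2}|f(t)-f(s)|^p\,ds\,dt .
\end{equation*}
It therefore suffices to prove $J\int_0^1\varphi\le 2R$, where $R$ is the double integral on the right of \eqref{onedimpoinc01} without its constant. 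The virtue of this formulation is that the target is \emph{linear} in $\varphi$.

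Next I would exploit monotonicity to pass to box weights. Only the values $\tau=|t-s|\in(0,1)$ enter $J$, $R$ and $\int_0^1\varphi$; since $\varphi$ is nonincreasing, its superlevel sets in $(0,1)$ are initial intervals, so $\varphi=\int_0^\infty\chi_{(0,r_\lambda)}\,d\lambda$ pointwise. As $\int_0^1\varphi$ and $R$ are linear in $\varphi$, Tonelli reduces the claim to the weights $\chi_{(0,r)}$, $r\in(0,1]$, i.e. to the local Poincar\'e inequality
\begin{equation*}
r\,J\ \le\ 2\int\!\!\int_{|t-s|<r}\frac{|f(t)-f(s)|^p}{|t-s|^p}\,ds\,dt .
\end{equation*}
This is the only place where the hypothesis that $\varphi$ is nonincreasing is used.

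The heart of the matter is this local inequality, and it is where I expect the difficulty. One cannot discard the weight $|t-s|^{-p}$: for $f=\chi_{[0,1/2]}$ the bulk of $J$ comes from pairs on opposite sides of the origin, including pairs of arbitrarily small separation, so the mass of the right-hand side is carried precisely by the blow-up of $|t-s|^{-p}$ near the diagonal. (For the same reason the shortcut via Chebyshev's integral inequality is unavailable: writing $\Phi(\tau)=\int|f(s+\tau)-f(s)|^p\,ds$ one has only the scaling bound $\Phi(n\sigma)\le n^p\Phi(\sigma)$ for integer $n$, and $\Phi(\tau)/\tau^p$ need not be monotone, as single-frequency examples show.) Instead I would connect two arbitrary points $t,s\in[-\tfrac12,\tfrac12]$ by $N=\lceil 1/r\rceil$ intermediate nodes at mutual separation $|t-s|/N<r$, apply the discrete convexity bound $|f(t)-f(s)|^p\le N^{p-1}\sum_k|f(x_{k+1})-f(x_k)|^p$, integrate in $(t,s)$, and change variables in each summand so as to recognise it inside the weighted band integral. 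The factor $N^{p-1}\asymp r^{-(p-1)}$ produced by the chaining, together with the step length $\asymp r$, is exactly what rebuilds the weight $|t-s|^{-p}$; the delicate and decisive point is to organise this so that the accumulated constants, helped by the unit length of the interval and by $p\ge1$, collapse to the clean factor $2$.
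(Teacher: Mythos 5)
Your proposal is correct, and it reaches the lemma by a genuinely different route from the paper. The paper sets $g(\tau)=\tau^{-p}\int_{-1/2}^{1/2-\tau}|f(t+\tau)-f(t)|^p\,dt$, proves the halving property $g(2\sigma)\le g(\sigma)$ by the same $p$-convexity estimate you use for chaining, and then invokes Lemma~2 of Bourgain--Brezis--Mironescu, a correlation-type inequality giving $\int_0^1 g\varphi\ \ge\ \tfrac12\int_0^1 g\,\int_0^1\varphi$ for nonincreasing $\varphi$ and any $g$ with the halving property; the monotonicity of $\varphi$ is consumed inside that cited black box. You instead consume it through the layer-cake decomposition $\varphi=\int_0^\infty\chi_{(0,r_\lambda)}\,d\lambda$ --- legitimate, since both $\int_0^1\varphi$ and the right-hand side only see $\varphi$ on $(0,1)$ and are linear in it --- reducing everything to the local inequality $rJ\le 2R_r$ for box weights, where $J=\iint|f(t)-f(s)|^p\,dt\,ds$ and $R_r=\iint_{|t-s|<r}|f(t)-f(s)|^p|t-s|^{-p}\,dt\,ds$. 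The step you flag as delicate does close with the clean constant $2$: take $N=\lceil 1/r\rceil$ equally spaced nodes $x_k=t+\tfrac{k}{N}(s-t)$, apply $|f(t)-f(s)|^p\le N^{p-1}\sum_k|f(x_{k+1})-f(x_k)|^p$, and for each $k$ substitute $(u,\tau)=(x_k,(s-t)/N)$, a linear bijection with Jacobian $1/N$ whose image lies in $\{u,\,u+\tau\in[-\tfrac12,\tfrac12],\ |\tau|\le 1/N\}$; then
$$
J\ \le\ N^{p-1}\cdot N\cdot N\iint_{|u-v|\le 1/N}|f(u)-f(v)|^p\,du\,dv\ \le\ N^{p+1}N^{-p}\,R_{1/N}\ \le\ N\,R_r\ \le\ \frac{2}{r}\,R_r\,,
$$
the three factors of $N$ coming from convexity, the number of chain segments, and the Jacobian, and the end using $|u-v|^{-p}\ge N^p$ on the band, $1/N\le r$, and $\lceil 1/r\rceil\le 1/r+1\le 2/r$ for $r\le 1$. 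As for what each approach buys: the paper's proof is shorter given the imported BBM lemma, and its first display discards $\overline f$ exactly as you do, so it too actually proves the stronger statement with $J$ on the left; yours is self-contained, avoids the BBM lemma entirely, and isolates the quantitative content --- a one-parameter family of local Poincar\'e inequalities tying the analyzing scale $r$ to the interval --- which is precisely the scale-versus-domain relation that resurfaces in the threshold condition (\ref{threshold}). Note finally that the halving property $g(2\sigma)\le g(\sigma)$ is your chain with $N=2$, iterated dyadically by the BBM lemma, so the two arguments are cousins at heart, differing in how the monotone weight is brought in.
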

\begin{proof}
To prove this lemma we will make use of \cite[Lemma 2]{BBM}. We
first claim that if we set
$$
g(\tau) = \displaystyle{\frac{1}{\tau^p}\int_{-\frac{1}{2}}^{\frac{1}{2} - \tau} dt
|f(t+\tau) - f(t)|^p}
$$
then for all $\tau \in (0,1)$ it holds $g(\tau) \leq g(\tau/2)$.
Indeed if we take $0 < \sigma < 1/2$
\begin{eqnarray*}
g\!\!\!\!\!\!\!\!&&\!\!\!\!\!\!\!(2\sigma) \ = \ \frac{1}{(2\sigma)^p} \int_{-\frac{1}{2}}^{\frac{1}{2} - 2\sigma}\!\!\!dt |f(t+2\sigma) - f(t)|^p\\
&\leq& \frac{1}{2\sigma^p} \left\{\int_{-\frac{1}{2}}^{\frac{1}{2} -
2\sigma}\!\!\!dt |f(t+2\sigma) - f(t+\sigma)|^p
+ \int_{-\frac{1}{2}}^{\frac{1}{2} - 2\sigma}\!\!\!dt |f(t+\sigma) - f(t)|^p\right\}\\
&\leq& \frac{1}{\sigma^p} \int_{-\frac{1}{2}}^{\frac{1}{2} - \sigma}\!\!\!dt
|f(t+\sigma) - f(t)|^p = g(\sigma)\ .
\end{eqnarray*}

The previous claim allows to apply Lemma 2 of \cite{BBM}:
\begin{displaymath}
\int_0^1 d\tau g(\tau) \varphi(\tau) \geq \frac{1}{2} \int_0^1 d\tau
g(\tau) \int_0^1 d\tau \varphi(\tau)\ .
\end{displaymath}
The proof of inequality (\ref{onedimpoinc01}) now follows:
\begin{eqnarray}\label{righthere}
\int_{-\frac{1}{2}}^{\frac{1}{2}} \!\!\!& \!\!\!dt \!\!\!& \!\!\!|f(t) - \overline{f}\, |^p \ \leq \ 
\int_{-\frac{1}{2}}^{\frac{1}{2}} dt \int_{-\frac{1}{2}}^{\frac{1}{2}} ds |f(t) - f(s)|^p\\
& \leq & 2\int_0^1 d\tau \frac{1}{\tau^p} \int_{-\frac{1}{2}}^{\frac{1}{2} - \tau}dt
|f(t+\tau) - f(t)|^p\nonumber\\
& = & 2\int_0^1 d\tau g(\tau) \leq \frac{4}{\int_0^1 d\tau \rho(\tau)} \int_0^1 d\tau
g(\tau)\rho(\tau)\nonumber\\
& = & \frac{2}{\int_0^1 d\tau \rho(\tau)}\int_{-\frac{1}{2}}^{\frac{1}{2}} dt
\int_{-\frac{1}{2}}^{\frac{1}{2}} ds \frac{|f(t) - f(s)|^p}{|t - s|^p} \rho(|t -
s|)\ .\nonumber
\end{eqnarray}
\end{proof}

\begin{cor}\label{cor1Poinc}
Let $\{\rho_n^{(1)}\}$ be a family of 1-dimensional mollifiers as in
(\ref{Onedimmoll}) such that each $\rho_n^{(1)}$ is a nonincreasing
function. Then for any $t_0 \in \R, \ T > 0$ and for any $C > 2$
there exists an $n_0$ such that
$$
\int_I |f(t) - \overline{f}_I|^p dt \leq C T^p \int_I dt \int_I ds
\frac{|f(t)-f(s)|^p}{|t-s|^p}\rho^{(1)}_n(|t-s|)
$$
for any $n > n_0$ and for any $f\in L^p_{loc}(\R), \ 1\leq p <
\infty$, where $I$ is the interval $I(t_0,T) = [t_0 - T/2, t_0 +
T/2]$ and we have set $\displaystyle{\overline{f}_I = \mean_I
f(t)dt}$.

In particular, $n_0=n_0(C,T)$ is determined by the condition
\begin{equation}\label{threshold}
\int_0^T \rho^{(1)}_n(\tau)d\tau
> \frac{2}{C} \fa n
> n_0
\end{equation}
and always exists by definition of mollifiers.
\end{cor}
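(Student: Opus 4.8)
The plan is to deduce Corollary \ref{cor1Poinc} from the one-dimensional inequality of Lemma \ref{LEMonedim} by a rescaling and translation argument. The only genuine work is to handle the facts that (i) Lemma \ref{LEMonedim} is stated on the fixed reference interval $[-\tfrac12,\tfrac12]$ rather than on $I(t_0,T)$, and (ii) the constant in Lemma \ref{LEMonedim} involves $\int_0^1\varphi(\tau)\,d\tau$ whereas the corollary wants to quantify, for each fixed $C>2$, how large $n$ must be.

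\medskip

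First I would reduce to the reference interval. Given $f\in L^p_{loc}(\R)$, $t_0\in\R$ and $T>0$, set $F(u)=f(t_0+Tu)$ for $u\in[-\tfrac12,\tfrac12]$. A change of variables $t=t_0+Tu$, $s=t_0+Tv$ turns the left side of the asserted inequality into $T\int_{-1/2}^{1/2}|F(u)-\overline F|^p\,du$ (note $\overline F=\overline{f}_I$ since the average is preserved), and turns the double integral on the right, with $\varphi(\tau)=\rho_n^{(1)}(\tau)$, into
\begin{equation*}
T^2\int_{-1/2}^{1/2}du\int_{-1/2}^{1/2}dv\,\frac{|F(u)-F(v)|^p}{T^p|u-v|^p}\,\rho_n^{(1)}(T|u-v|).
\end{equation*}
So if I define the rescaled weight $\varphi_T(\sigma)\doteq\rho_n^{(1)}(T\sigma)$, then Lemma \ref{LEMonedim} applied to $F$ with weight $\varphi_T$ (which is nonincreasing and in $L^1_{loc}$ because $\rho_n^{(1)}$ is) gives
\begin{equation*}
\int_{-1/2}^{1/2}|F(u)-\overline F|^p\,du\leq\frac{2}{\int_0^1\rho_n^{(1)}(T\sigma)\,d\sigma}\int_{-1/2}^{1/2}du\int_{-1/2}^{1/2}dv\,\frac{|F(u)-F(v)|^p}{|u-v|^p}\,\rho_n^{(1)}(T|u-v|).
\end{equation*}
Multiplying through by $T$ and undoing the substitution, the powers of $T$ match up to leave exactly $T^p$ multiplying the double integral in the original variables, with the prefactor $\dfrac{2}{\int_0^1\rho_n^{(1)}(T\sigma)\,d\sigma}$.

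\medskip

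The remaining step is to absorb that prefactor. A change of variable $\tau=T\sigma$ gives $\int_0^1\rho_n^{(1)}(T\sigma)\,d\sigma=\tfrac1T\int_0^T\rho_n^{(1)}(\tau)\,d\tau$, so the whole inequality reads
\begin{equation*}
\int_I|f(t)-\overline{f}_I|^p\,dt\leq\frac{2}{\int_0^T\rho_n^{(1)}(\tau)\,d\tau}\,T^p\int_I dt\int_I ds\,\frac{|f(t)-f(s)|^p}{|t-s|^p}\,\rho_n^{(1)}(|t-s|).
\end{equation*}
Since the $\rho_n^{(1)}$ are normalized, $\int_0^\infty\rho_n^{(1)}=1$ and the tail vanishes, we have $\int_0^T\rho_n^{(1)}(\tau)\,d\tau\to1$ as $n\to\infty$; hence for any $C>2$ there is an $n_0$ with $\int_0^T\rho_n^{(1)}(\tau)\,d\tau>2/C$ for all $n>n_0$, which is precisely condition (\ref{threshold}). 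For such $n$ the prefactor $2/\int_0^T\rho_n^{(1)}$ is bounded by $C$, giving the claim with $n_0=n_0(C,T)$. The main point to be careful about is bookkeeping the powers of $T$ across the rescaling so that the stated $T^p$ emerges cleanly; there is no deep obstacle, only the translation/dilation invariance of the setup.
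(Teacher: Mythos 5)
Your proof is correct and follows essentially the same route as the paper: both rescale and translate to the reference interval $[-\tfrac12,\tfrac12]$, apply Lemma \ref{LEMonedim} with the dilated weight $\varphi(T\cdot)$, and then use the normalization and vanishing-tail properties of $\rho_n^{(1)}$ to obtain condition (\ref{threshold}). Your bookkeeping of the powers of $T$ and the identity $\int_0^1\rho_n^{(1)}(T\sigma)\,d\sigma=\tfrac1T\int_0^T\rho_n^{(1)}(\tau)\,d\tau$ reproduces exactly the paper's intermediate inequality (\ref{onedimpoinc}), so nothing is missing.
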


This corollary says that Lemma \ref{LEMonedim} is a
Poincar\'e-like inequality on intervals of $\R$. The proof
essentially relies on applying scaling and translations.
\begin{proof}
If we take any $\varphi:\R^+\rightarrow\R^+$ in $L^1_{loc}(\R^+)$,
nonincreasing, and set
$$
\tilde{f}(t) = f(Tt + t_0) \ , \ \ \tilde\varphi(t) = \varphi(Tt)\ .
$$
we obtain auxiliary functions which still satisfy the hypotheses of
Lemma \ref{LEMonedim}, so that (\ref{onedimpoinc01}) holds. Hence we
obtain, for the couple $(f,\varphi)$, a Poincar\'e-like inequality
on the interval :
\begin{equation}\label{onedimpoinc}
\int_I |f(t) - \overline{f}_I|^p dt \leq \frac{2}{\int_0^T
\varphi(\tau)d\tau} T^p \int_I dt \int_I ds
\frac{|f(t)-f(s)|^p}{|t-s|^p}\varphi(|t-s|)\ .
\end{equation}
Now it suffices to write inequality (\ref{onedimpoinc})
for a family of one dimensional mollifiers $\rho^{(1)}_n$ as in
(\ref{Onedimmoll}).
\end{proof}

Condition (\ref{threshold}), that allows to determine the threshold $n_0$,
means that the smaller is the interval of evaluation, the smaller
must be the analyzing scale powered by the mollifiers. This is made
clearer by the following remark.

\begin{rem}\label{REMsigma}
Set $\varphi$ within the hypotheses of Lemma \ref{LEMonedim}, then
for any $\sigma>0$
\begin{displaymath}
\int_{-\frac{1}{2}}^{\frac{1}{2}}\!\!dt |f(t) - \overline{f}|^p \leq
\frac{2}{\int_0^{\sigma} \varphi(\tau)d\tau} \sigma
\int_{-\frac{1}{2}}^{\frac{1}{2}}\!\! dt \int_{-\frac{1}{2}}^{\frac{1}{2}}\!\! ds
\frac{|f(t)-f(s)|^p}{|t-s|^p}\varphi(\sigma|t-s|)\ .
\end{displaymath}
This follows immediately from inequality (\ref{onedimpoinc01}) with
$\tilde\varphi(\tau) = \varphi(\sigma\tau)$.
\end{rem}

If we take here $\varphi$ as a mollifier $\rho^{(1)}_n$,
we obtain a right term with $\sigma\rho^{(1)}_n(\sigma\ .\ )$, which
has the same mass of $\rho$ but a different analyzing scale (larger
as $\sigma$ tightens). For small $\sigma$, this can be compensated
by taking higher values of $n$, which indeed provide a constant that
does not depend on $\sigma$. The point we want to stress is that to
get Poincar\'e-like estimates with a nonlocal term on the right, the
scale of nonlocality (indexed by n) is necessarily influenced by the
scale of the problem.

In this section we will also need a result on the
structure of Carnot-Ca\-rath\'e\-odo\-ry balls from \cite[\S 3]{Morb},
which we state here in a shortened form for the case of interest.
\begin{teo}\label{Morbidelli}
Let $\G$ be a Carnot group of dimension $N$ and step $k$,
denote by $m_j$ the dimension of the $j-th$ layer and by $\{X_1,
\dots, X_{m_1}\}$ a basis of the horizontal layer. Let $d_X$ be the
CC distance (\ref{CCdist}) and $B_X$ the related balls. Then there
exist three multi-indexes of length $M=\sum_{l=1}^{k}(3\cdot
2^{l-1} - 2)m_l$
\begin{eqnarray*}
I & = \ (i_1,\dots,i_M) \ , \ & i_n \in \{1,\dots,m_1\}\\
J & = \ (j_1,\dots,j_M) \ , \ & j_n \in \{1,\dots,N\}\\
\omega & = \ (\omega_1,\dots,\omega_M) \ , \ & \omega_n \in \{0,1\}
\end{eqnarray*}
and two geometric constants $0<b<a<1$ such that if we set
\begin{eqnarray*}
E_{I,J,\omega}: & \R^N & \longrightarrow \ \ \G\\
& (t_1,\dots,t_N) & \longmapsto \ \
\exp\left((-1)^{\omega_1}t_{j_1}X_{i_1}\right)\cdot \ldots \cdot
\exp\left((-1)^{\omega_M}t_{j_M}X_{i_M}\right)
\end{eqnarray*}
then
\begin{equation}\label{Control}
B_X(0,bR) \subset E_{I,J,\omega}(Q(0,aR)) \subset B_X(0,R)
\end{equation}
for all $R>0$, where
$$
Q(0,\delta) = \left\{(t_1,\dots,t_N)\in \R^N : \
\emph{max}\,|t_j|<\delta \right\}\ .
$$
\end{teo}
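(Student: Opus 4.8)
The plan is to reduce the statement to the unit scale by homogeneity, to realise $E_{I,J,\omega}$ as a concatenation of commutator words, and then to prove the two inclusions in \eqref{Control} separately: the right one directly from the definition of the CC length, the left one from a nondegeneracy analysis of the word map. First I would exploit the dilation structure. Since each $X_{i_n}$ is horizontal, hence of weight one, one has $\delta_\lambda(\exp(t X_{i_n})) = \exp(\lambda t X_{i_n})$ and therefore
$$
\delta_\lambda\big(E_{I,J,\omega}(t_1,\dots,t_N)\big) = E_{I,J,\omega}(\lambda t_1,\dots,\lambda t_N)\ .
$$
Because $Q(0,aR) = R\cdot Q(0,a)$ and $\delta_R(B_X(0,s)) = B_X(0,Rs)$ by homogeneity of $d_X$, applying $\delta_{1/R}$ shows that \eqref{Control} for an arbitrary $R$ is equivalent to the single inclusion at $R=1$; it thus suffices to produce constants $0<b<a<1$ with $B_X(0,b) \subset E_{I,J,\omega}(Q(0,a)) \subset B_X(0,1)$.

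To build the map, for every basis vector of the $l$-th layer, written as an iterated bracket of horizontal generators, I would attach the commutator word that approximates it: a weight-one element needs a single factor, while a weight-$l$ element is obtained from a weight-$(l-1)$ word $W$ through the four-block pattern $\exp(\pm t X_i)\, W\, \exp(\mp t X_i)\, W^{-1}$, so that its length $c_l$ obeys $c_l = 2c_{l-1}+2$ with $c_1=1$, i.e. $c_l = 3\cdot 2^{l-1}-2$. Concatenating these words over all layers produces exactly $M = \sum_{l=1}^{k} c_l\, m_l$ factors and fixes the multi-indexes $I$ (which horizontal field is used), $J$ (which coordinate plays the role of flow time) and $\omega$ (the signs).

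The right inclusion is elementary. Each factor $\exp((-1)^{\omega_n} t_{j_n} X_{i_n})$ is the time-$|t_{j_n}|$ flow of a unit horizontal field, hence an admissible horizontal arc of CC length $|t_{j_n}|<a$ on $Q(0,a)$; concatenating the $M$ factors joins $0$ to $E_{I,J,\omega}(t)$ by a horizontal curve of length at most $Ma$, so choosing $a<1/M$ gives $E_{I,J,\omega}(Q(0,a))\subset B_X(0,1)$ at once.

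The left inclusion is where the work lies, and I expect the main obstacle to be the degeneracy of the ordinary differential of $E_{I,J,\omega}$ at the origin: a weight-$l$ commutator word displaces the $l$-th layer by an amount of order $t^l$ rather than $t$, so $E_{I,J,\omega}$ is not a submersion in the naive sense. I would bypass this by comparing $E_{I,J,\omega}$ with the canonical chart $u \mapsto \exp\big(\sum_j u_j Y_j\big)$, the $Y_j$ being the chosen brackets: by the Baker--Campbell--Hausdorff formula the weight-$l$ component of $E_{I,J,\omega}(t)$ equals the corresponding canonical coordinate up to strictly lower-order terms in the homogeneous grading, so that in the anisotropic variables the map is a perturbation of the identity with nonvanishing weighted Jacobian --- this is exactly where the stratification $\g=\bigoplus_{j=1}^{k} W_j$, which guarantees that the $Y_j$ span $\g$, enters. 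A quantitative inverse-function (or topological degree) argument then yields that $E_{I,J,\omega}(Q(0,a))$ contains a full homogeneous neighborhood of the origin, and comparing it with the CC ball through the equivalence of homogeneous distances produces a radius $b<a$ with $B_X(0,b)\subset E_{I,J,\omega}(Q(0,a))$, completing the scale-one statement and, by the opening reduction, the theorem.
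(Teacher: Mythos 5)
The paper itself offers no proof to compare against: Theorem \ref{Morbidelli} is quoted from Morbidelli \cite[\S 3]{Morb}, explicitly ``in a shortened form'', so I assess your proposal on its own terms. The easy parts are right: the equivariance $\delta_\lambda\big(E_{I,J,\omega}(t)\big)=E_{I,J,\omega}(\lambda t)$ does reduce everything to $R=1$; the recursion $c_1=1$, $c_l=2c_{l-1}+2$, hence $c_l=3\cdot 2^{l-1}-2$, reproduces $M$; and the right inclusion follows exactly as you say, since each factor is a unit-speed horizontal arc of length $|t_{j_n}|$, so $E_{I,J,\omega}(Q(0,a))\subset B_X(0,Ma)$ and $a<1/M$ suffices.

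The left inclusion is where your argument breaks, and the gap is not repairable along the lines you sketch. Precisely because of the equivariance you established, every canonical coordinate of weight $l$ of $E_{I,J,\omega}(t)$ is a polynomial homogeneous of degree $l$ in $(t_1,\dots,t_N)$; the dedicated parameter $t_j$ of a weight-$l$ commutator block therefore enters through $t_j^{\,l}$, not linearly, so in the anisotropic variables the map is \emph{not} a perturbation of the identity with nonvanishing weighted Jacobian, and for even $l$ the principal part is one-signed, which no inverse-function or degree argument can overcome. Concretely, in the first Heisenberg group ($N=3$, $m_1=2$, $m_2=1$, $M=6$, $[X_1,X_2]=X_3$) the natural word gives exactly
$$
\exp(t_1X_1)\exp(t_2X_2)\exp(t_3X_1)\exp(t_3X_2)\exp(-t_3X_1)\exp(-t_3X_2)=\Big(t_1,\ t_2,\ \tfrac{t_1t_2}{2}+t_3^2\Big),
$$
whose image is $\{z\geq xy/2\}$ and contains no neighborhood of the origin. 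Worse, no choice of $(I,J,\omega)$ helps: homogeneity forces the two horizontal components of any word of the stated form to be linear forms in $(t_1,t_2,t_3)$ and the vertical component to be a quadratic form $q$; if the linear forms are dependent the horizontal image is degenerate, while if they are independent then on their common kernel line $t=sv$ one has $q(sv)=s^2q(v)$, again one-signed, so the points $(0,0,z_0)$ with the opposite sign are never attained. Hence the left inclusion of (\ref{Control}) fails for any single fixed word with shared parameters on $\R^N$: the ``shortened form'' in the paper suppresses the fact that in the original construction the sign pattern $\omega$ (equivalently, the ordering of the two flows inside each commutator block, or the use of independent times for them) must be allowed to depend on the target point, so that $B_X(0,bR)$ is covered by the images of a finite family of such maps. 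This is harmless for the way the paper uses the theorem --- it only needs that every $z\in B_X(0,2\lambda R)$ is a product of $M$ horizontal exponentials with times $|t_n|<aTR$ --- but your BCH/degree strategy, which requires a single map to be open at the origin, has a genuine gap at its central step.
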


The main geometric meaning of this theorem is contained in
the structure of the multi-indexes $I,J,\omega$, which we have
skipped here since it goes beyond our needs. The idea is that each
point of a Carnot group can be reached by exponential curves of
horizontal vector fields, and when a commutator of two vector fields
is needed, it can be approximated by a finite length ``square path''
along the two fields, taken successively with opposite signs. This
is the main difference between this result and the classical result
due to Nagel, Stein and Wainger (\cite[Theorem 7]{NSW}), where
instead the authors work directly with integral curves of
commutators. The key relation (\ref{Control}) indicates that the
approximated exponential coordinates $(t_1,\dots,t_N)$ are uniformly
controlled by the size of the CC-ball that they are able to span,
and this will be used as a control on the time-lengths of connecting
curves.

We are now ready to prove the main theorem of
this section.
\begin{teo}\label{PPteo}
Let $\G$ be a Carnot group and $\n{.}$ a homogeneous norm with
parameter $\lambda$ as in (\ref{equiv}). Then there exist two positive geometric
constants $\beta,\mu$ and a positive constant $C_{p,Q}$ depending only on $p$, $Q$ and the
geometric constants $a, b, M$ given by Theorem \ref{Morbidelli} such that
\begin{equation}\label{PP}
\int_{B}\!\! dx |f(x) - f_B|^p \leq C_{p,Q} \ R^p\int_{\mu
B}\int_{\mu B}\!\!\!\! dx dy \ \frac{|f(y) - f(x)|^p}{\n{y^{-1}\cdot x}^{p+Q-1}}
\frac{\varphi(\n{y^{-1}\cdot x})}{\int_0^{\beta R}
\varphi(\tau)d\tau}
\end{equation}
for all balls $B$ of radius $R$, all $f$ in $L^p_{loc}(\G), \ 1 \leq
p < \infty$ and all nonincreasing $\varphi : \R^+ \rightarrow \R^+$
in $L^1_{loc}(\R^+)$.
\end{teo}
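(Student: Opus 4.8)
The plan is to transfer the one-dimensional Poincar\'e estimate of Lemma~\ref{LEMonedim}, in the quantitative form of Corollary~\ref{cor1Poinc}, along the horizontal ``square paths'' furnished by Theorem~\ref{Morbidelli}. First I would reduce to the model situation: by left-invariance of both the homogeneous distance and the Haar measure (Corollary~\ref{Changevar}, item~1) it suffices to treat a ball centred at the identity, $B=B(0,R)$. Jensen's inequality applied to the average gives the standard reduction
\[ \int_B |f(x) - f_B|^p\,dx \ \le\ \frac{1}{|B|}\int_B\int_B |f(x) - f(z)|^p\, dz\, dx , \]
so the task becomes to control each difference $|f(x)-f(z)|^p$ by a one-dimensional oscillation and then to reassemble the pieces into the $Q$-dimensional right-hand side.

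Next I would connect and telescope. For $x,z\in B$ put $w=x^{-1}\cdot z$; the quasi triangle inequality (\ref{quasitr}) bounds $\n{w}$ by a fixed multiple of $R$, and passing to Carnot--Carath\'eodory balls through the equivalence (\ref{equiv}) allows the application of Theorem~\ref{Morbidelli}, so that $w=E_{I,J,\omega}(t)$ with every $|t_j|<\beta R$, where the geometric constant $\beta$ absorbs $a$, $b$, $\lambda$ and the quasi-metric constant $\alpha$. Writing $z=x\cdot\prod_{n=1}^M\exp\big((-1)^{\omega_n}t_{j_n}X_{i_n}\big)$ and setting $p_0=x$, $p_n=p_{n-1}\cdot\exp\big((-1)^{\omega_n}t_{j_n}X_{i_n}\big)$, convexity yields
\[ |f(x)-f(z)|^p \ \le\ M^{p-1}\sum_{n=1}^M |f(p_{n-1}) - f(p_n)|^p , \]
reducing everything to single increments along one horizontal field $X_{i_n}$, all of whose intermediate points $p_n$ remain in a dilated ball $\mu B$ (iterating (\ref{quasitr}) at most $M$ times fixes the geometric constant $\mu$).

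Then I would apply the one-dimensional inequality direction by direction. Integrating the telescoped sum in the variable $t_{j_n}$ turns the pointwise difference $|f(p_{n-1})-f(p_n)|^p$ into the oscillation of the profile $\sigma\mapsto f\big(p_{n-1}\cdot\exp(\sigma X_{i_n})\big)$ over an interval of length comparable to $\beta R$; Corollary~\ref{cor1Poinc} (equivalently Remark~\ref{REMsigma}) bounds this oscillation by $C\,(\beta R)^p\big(\int_0^{\beta R}\varphi\big)^{-1}$ times the one-dimensional double integral of $\tfrac{|f(\cdot)-f(\cdot)|^p}{|\cdot|^p}\,\varphi(|\cdot|)$ along that $X_{i_n}$-curve. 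This is where the factors $R^p$ and $\big(\int_0^{\beta R}\varphi\big)^{-1}$ appear, and where $C_{p,Q}$ acquires its dependence on $p$ and on $M$. To recover the $Q$-dimensional weight I would observe that along an $X_{i_n}$-curve the increment is $\exp(\sigma X_{i_n})=\delta_\sigma(\exp(X_{i_n}))$, so its homogeneous norm is exactly $|\sigma|\,\n{\exp(X_{i_n})}$; integrating the one-dimensional double integrals over the transverse directions and over the base point $x$, and using the scaling of the measure by the homogeneous dimension $Q$ (Corollary~\ref{Changevar}, item~2), converts the one-dimensional weight $|\sigma|^{-p}\varphi(|\sigma|)$ into $\n{y^{-1}\cdot x}^{-(p+Q-1)}\varphi(\n{y^{-1}\cdot x})$, which is precisely the passage from a one-dimensional profile to a $Q$-dimensional radial mollifier recorded in (\ref{1toQ}), with both points now ranging over $\mu B$.

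The hard part will be the measure and coordinate bookkeeping in this last reassembly. The approximate exponential map $E_{I,J,\omega}$ is built from $M\ge N$ successive factors that reuse the $N$ coordinates $t_j$, so it is neither injective nor volume preserving, and the change of variables $z\leftrightarrow(x,t)$ must be controlled only through the two-sided inclusion (\ref{Control}) rather than through an exact Jacobian. I expect the delicate points to be: ensuring that the single-variable integration isolating $t_{j_n}$ is legitimate despite possible repetitions among the indices $j_1,\dots,j_M$; checking that the resulting one-dimensional (foliated) double integrals are genuinely dominated by the full double integral over $\mu B\times\mu B$ with the nonnegative integrand; and verifying that the accumulated dilation factors land all intermediate points inside $\mu B$ while keeping the constant $C_{p,Q}$ dependent only on $p$, $Q$ and the geometric data $a,b,M$. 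Once these measure-theoretic estimates are in place, summing over the $M$ steps and over $x$ closes the argument.
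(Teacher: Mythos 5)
Your skeleton matches the paper's (Jensen reduction to a double integral, Morbidelli telescoping along at most $M$ horizontal exponential arcs, the one-dimensional inequality of Lemma \ref{LEMonedim} in the rescaled form of Remark \ref{REMsigma}, and a final conversion of the one-dimensional weight into the $Q$-dimensional one), and you correctly flag the two delicate points. But your resolution of the crucial step is not a proof, and as stated it fails. The claim that ``integrating over the transverse directions and over the base point $x$, and using the scaling of the measure'' converts $|\sigma|^{-p}\varphi(|\sigma|)$ into $\n{y^{-1}\cdot x}^{-(p+Q-1)}\varphi(\n{y^{-1}\cdot x})$ has no mechanism behind it: after the one-dimensional lemma you hold, for each direction $X_{i_n}$, an integral over pairs of the form $\big(y,\,y\cdot\exp(\sigma X_{i_n})\big)$, i.e. over a set of dimension $Q+2$ in parameters $(y,t,s)$, which is a null set inside $\mu B\times\mu B$. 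An integral of a nonnegative integrand over $\mu B\times\mu B$ with an absolutely continuous weight cannot dominate an integral over such a lower-dimensional family; the exponents $p+Q-1$ match on dimensional grounds (this is why the scaling ``looks right''), but a scaling match is not an inequality, and relation (\ref{1toQ}) is purely a definitional correspondence between one-dimensional and radial mollifiers — it does no geometric work here. The same objection applies to your other flagged point: the times $t_{j_n}$ in Theorem \ref{Morbidelli} are functions of $z$ (with coordinates reused, $M>N$), so one cannot simply ``integrate in the variable $t_{j_n}$''; you acknowledge this but propose no fix.

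The paper closes both gaps with two explicit averaging constructions that are absent from your plan. First, to eliminate the $z$-dependent time $\tau=\tau(z)$, it writes $|f(\eta\cdot\exp(\tau X))-f(\eta)|^p$ as an average over fresh variables $t,\sigma\in[-aTR,aTR]$, splits by the triangle inequality, and shifts along the one-parameter subgroup (the change $x=\eta\cdot\exp((\tau-t)X)$), arriving at claim (\ref{reduceclaim}), in which $z$ no longer appears. Second — and this is the key missing idea — after applying the one-dimensional estimate it \emph{thickens} the line: it introduces the full-dimensional balls $\Omega_n=B_X\big(\gamma_n(\tfrac{t+s}{2}),\epsilon|t-s|R\big)$ centered midway between $\gamma_n(t)$ and $\gamma_n(s)$ with radius comparable to their separation, averages over $\xi\in\Omega_n$ via the triangle inequality, and then changes variables $y=x\cdot\gamma_n(t)$, $h=\gamma_n(t)^{-1}\cdot\xi$. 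The normalized average contributes the factor $(\epsilon|t-s|R)^{-Q}$, and a Fubini exchange between the $\tau$-integration and $h$, together with the monotonicity of $\varphi$ (used to bound the resulting kernel $\Phi^{(\nu)}_{R,\sigma}$) and the choice $\sigma=\lambda\nu R$, is what actually produces $\n{h}^{-(p+Q-1)}\varphi(\n{h})$ with the normalization $\int_0^{\beta R}\varphi$. Without this midpoint-ball averaging, no inequality of the desired direction holds for general $f\in L^p_{loc}(\G)$, so your step three-to-four transition is a genuine gap rather than routine bookkeeping.
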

\begin{proof}
The proof is largely inspired by a simple proof of the
classical Poincar\'{e} inequality, due to Varopoulos (see
\cite{Varopoulos}, and also \cite{HK}, \cite{FNafsa}). The constant $C$ that will appear is intended as a positive constant that can change from line to line. To start, by H\"older inequality
$$
I = \int_{B(x_0,R)}\!\!\!\!\!\!dx |f(x) - f_B|^p \leq \frac{1}{c_B
R^Q} \int_{B(x_0,R)}\!\!\!\!\!\!dx \int_{B(x_0,R)}\!\!\!\!\!\!dy
|f(x) - f(y)|^p\ .
$$
Then, passing to Carnot-Carath\'eodory balls and setting $c_{B_X}=|B_X(0,1)|$
\begin{eqnarray*}
I & \leq & \frac{1}{c_B R^Q} \int_{B_X(x_0,\lambda R)}\!\!\!\!\!\!dx
\int_{B_X(x_0,\lambda R)}\!\!\!\!\!\!dy |f(x) - f(y)|^p\\
& = & \lambda^Q \frac{c_{B_X}}{c_B}\mean_{B_X(x_0,\lambda
R)}\!\!\!\!\!\!dy \int_{y^{-1}\cdot
B_X(x_0,\lambda R)}\!\!\!\!\!\!dz |f(y\cdot z) - f(y)|^p\\
& \leq & \lambda^Q \frac{c_{B_X}}{c_B}\mean_{B_X(x_0,\lambda
R)}\!\!\!\!\!\!dy \int_{B_X(0,2\lambda R)}\!\!\!\!\!\!dz |f(y\cdot
z) - f(y)|^p
\end{eqnarray*}
where the last transition holds because $y^{-1}\cdot B_X(x_0,\lambda
R) \subset B_X(0,2\lambda R)$: indeed, by left-invariance of the
CC-distance
$$
d_X(z,0) = d_X(y\cdot z, y) \leq d_X(y\cdot z, x_0) + d_X(x_0,y) <
2\lambda R
$$
so we end up with
$$
I \leq C \mean_{B_X(0,2\lambda
R)}\!\!\!\!\!\!dz \int_{B_X(x_0,\lambda R)}\!\!\!\!\!\!dy |f(y\cdot
z) - f(y)|^p\ .
$$

By Theorem \ref{Morbidelli} we can reach any $z \in B_X(0,2\lambda R)$ with a finite composition
of integral curves of horizontal vector fields, within a finite time. More precisely we can write
$$
z = \prod_{k=1}^M\exp\left((-1)^{\omega_k}t_{j_k}X_{i_k}\right)
$$
where, by (\ref{Control}), if we set $T = \displaystyle{\frac{2\lambda}{b}}$ we have that the times
needed to cover this path are uniformly bounded by
\begin{equation}\label{T}
|t_n| < a T R \fa n \in \{1,\dots,N\}\ .
\end{equation}

Making use of partial compositions $\displaystyle{\zeta_n = \prod_{k=1}^n\exp\left((-1)^{\omega_k}t_{j_k}X_{i_k}\right)}$, we obtain 
\begin{eqnarray*}
|\!\!\!\!&&\!\!\!\!\!\!\!\!\!\!\!f(y\cdot z) - f(y)|^p \ = \  \left|f\big(y\cdot\prod_{k=1}^M\exp\left((-1)^{\omega_k}t_{j_k}X_{i_k}\right)\big)-f(y)\right|^p\\
& = &
\Bigg|\sum_{n=1}^{M-1}\bigg[f\big(y\cdot\zeta_{n+1}\big)
-
f\big(y\cdot\zeta_n\big)\bigg] + f(y\cdot\exp((-1)^{\omega_1}t_{j_1}X_{i_1}))-f(y)\Bigg|^p\\
& \leq &
M^{p-1}\Bigg[\sum_{n=1}^{M-1}\bigg|f\big(y\cdot\zeta_{n+1}\big)
- f\big(y\cdot\zeta_n\big)\bigg|^p\\
&& \phantom{M^{p-1}} + \bigg|f(y\cdot\exp((-1)^{\omega_1}t_{j_1}X_{i_1}))-f(y)\bigg|^p\Bigg]
\end{eqnarray*}
so that we can separate the estimate of the mean oscillations into $M$ pieces
\begin{equation}\label{POINT}
I \leq C \sum_{n=0}^{M-1} I_n
\end{equation}
where
\begin{displaymath}
\left\{
\begin{array}{rcl}
I_n & = & \displaystyle{\mean_{B_X(0,2\lambda R)}\!\!\!\!\!\!dz
\int_{B_X(x_0,\lambda R)}\!\!\!\!\!\!dy} \ \displaystyle{
\bigg|f\big(y\cdot\prod_{k=1}^{n+1}\exp\left((-1)^{\omega_k}t_{j_k}X_{i_k}\right)\big)}\\
&& - \ \displaystyle{ f\big(y\cdot\prod_{k=1}^n\exp\left((-1)^{\omega_k}t_{j_k}X_{i_k}\right)\big)\bigg|^p} \ \ \ \ \textrm{if} \ n>0\\
I_0 & = & \displaystyle{\mean_{B_X(0,2\lambda R)}\!\!\!\!\!\!dz
\int_{B_X(x_0,\lambda R)}\!\!\!\!\!\!dy} \ 
\displaystyle{\bigg|f(y\cdot\exp((-1)^{\omega_1}t_{j_1}X_{i_1}))-f(y)\bigg|^p}\
.
\end{array}
\right.
\end{displaymath}
We note here that terms $I_n$ with $n>0$ can be reduced to the form
of term $I_0$ by means of the change of variables $\eta = y\cdot\zeta_n$
\begin{eqnarray*}
I_n & = & \mean_{B_X(0,2\lambda R)}\!\!\!\!\!\!dz
\int_{B_X(x_0,\lambda R)\cdot\zeta_n}\!\!d\eta
\left|f(\eta\cdot\exp((-1)^{\omega_{n+1}}t_{j_{n+1}}X_{i_{n+1}}))-f(\eta)\right|^p\\
& \leq & \mean_{B_X(0,2\lambda R)}\!\!\!\!\!\!dz
\int_{B_X\left(x_0,\left(T+\lambda\right)R\right)}\!\!\!\!\!\!d\eta
\left|f(\eta\cdot\exp((-1)^{\omega_{n+1}}t_{j_{n+1}}X_{i_{n+1}}))-f(\eta)\right|^p
\end{eqnarray*}
where the last transition holds since, by (\ref{T}) and making use
of (\ref{Control})
\begin{displaymath}
d_X(\eta,x_0) \leq d_X(\eta,y) + d_X(y,x_0) < d_X(\zeta_n,0) +
\lambda R < T R + \lambda R\ .
\end{displaymath}
Each $I_n$, for $n=0,\dots,M-1$ , is then bounded by an integral of
type
$$
J = \mean_{B_X(0,2\lambda R)}\!\!\!\!\!\!dz
\int_{B_X(x_0,(T+\lambda) R)}\!\!\!\!\!\!d\eta
\left|f(\eta\cdot\exp(\tau X))-f(\eta)\right|^p
$$
where $|\tau| < aTR$ given by (\ref{T}) contains the dependence on
$z$.

Our purpose is to reduce this expression to a form suitable to use
Lemma (\ref{LEMonedim}). To do this, we first claim that
\begin{equation}\label{reduceclaim}
J \leq 2^{p+2} \int_{B_X(x_0,(3T +\lambda)R)}\!\!\!\!\!\! dx
\int_{-\frac{1}{2}}^{\frac{1}{2}}dt \int_{-\frac{1}{2}}^{\frac{1}{2}}ds |f(x\cdot\gamma(t)) -
f(x\cdot\gamma(s))|^p
\end{equation}
where we have set $\gamma(t) = \exp\left(4 a T R t X\right)$. We
observe that this claim does not contain anymore integration on $z$,
so it will provide an estimate where the multi-indexes $j_n$ and
$\omega_n$ do not appear. To prove it, we begin with
\begin{eqnarray*}
\big|\!\!\!\!\!\!&&\!\!\!\!\!\!\!\!\!f(\eta\cdot\exp(\tau X))-f(\eta)\big|^p\\
& = & \frac{1}{(2aTR)^2} \int_{-aTR}^{aTR}\!\! dt \int_{-aTR}^{aTR}\!\! d\sigma \ \big|f(\eta\cdot\exp(\tau X))-f(\eta)\big|^p\\
& \leq & \frac{2^{p-1}}{(2aTR)^2} \left[ \int_{-aTR}^{aTR}\!\! dt \int_{-aTR}^{aTR}\!\! d\sigma \
\big|f(\eta\cdot\exp(\tau
X))-f(\eta\cdot\exp((\tau + \sigma) X))\big|^p\right.\\
&& \left. + \ \int_{-aTR}^{aTR}\!\! dt \int_{-aTR}^{aTR}\!\! d\sigma \ \big|f(\eta\cdot\exp((\tau + \sigma)
X))-f(\eta)\big|^p\right]\\
& = & \frac{2^{p-1}}{(2aTR)^2}\big[A(\eta,\tau) + B(\eta,\tau)\big]\
.
\end{eqnarray*}
We now look separately at these two terms: for the first one we have, indicating $\omega = aTR$ and $B_X = B_X(x_0,R)$
\begin{eqnarray*}
J_A(\tau)\!\!\! & = & \!\!\!\int_{B_X(x_0,(T+\lambda)R)}\!\!\!\!\!\! d\eta \ A(\eta,\tau)\\
& = &\!\!\! \int_{-\omega}^{\omega}\!\! dt \int_{-\omega}^{\omega}\!\! d\sigma \int_{(T+\lambda)B_X}\!\!\!\!\!\! d\eta \
\big|f(\eta\cdot\exp(\tau X)) - f(\eta\cdot\exp((\tau + \sigma)X))\big|^p\\
& = & \!\!\! \int_{-\omega}^{\omega}\!\! dt \int_{-\omega}^{\omega}\!\! d\sigma \int_{(T+\lambda)B_X}\!\!\!\!\!\!
d\eta \ \big|f(\eta\cdot\exp((\tau-t)X)\cdot\exp(tX))\\
&& \phantom{\int_{-\omega}^{\omega}\!\! dt \int_{-\omega}^{\omega}\!\! d\sigma} -
f(\eta\cdot\exp((\tau - t) X)\cdot\exp((t + \sigma)
X))\big|^p\\
& \leq & \!\!\! \int_{-\omega}^{\omega}\!\! dt \int_{-\omega}^{\omega}\!\! d\sigma \int_{(3T+\lambda)B_X}\!\!\!\!\!\! dx \
\big|f(x\cdot\exp(tX)) - f(x\cdot\exp((t + \sigma) X))\big|^p
\end{eqnarray*}
where the last transiton holds since, after performing the change of
variables $x = \eta\cdot\exp((\tau-t)X)$, we have
$$
d_X(x,x_0) \leq d_X(x,\eta) + d_X(\eta,x_0) < d_X(\exp((\tau-t)X),0)
+ (T+\lambda)R
$$
and $|\tau - t|\leq 2aTR$ \ so, again by (\ref{Control}),
$d_X(\exp((\tau-t)X),0)\leq 2TR$.

Moreover, if we set $s = t + \sigma$, we end up with
\begin{equation}\label{JA}
J_A \leq \int_{-2\omega}^{2\omega}\!\! dt \int_{-2\omega}^{2\omega}\!\! ds
\int_{(3T +\lambda)B_X}\!\!\!\!\!\! dx \ \big|f(x\cdot\exp(tX)) - f(x\cdot\exp(sX))\big|^p
\end{equation}
where, as desired, we have no more dependence on $\tau$, hence on
$z$.

For the second term we proceed in a similar way:
\begin{eqnarray*}
J_B(\tau)\!\!\! & = & \!\!\!\int_{B_X(x_0,(T +\lambda)R)}\!\!\!\!\!\! d\eta \ B(\eta,\tau)\\
& = &\!\!\! \int_{-\omega}^{\omega}\!\! dt \int_{-\omega}^{\omega}\!\! d\sigma \int_{(T+\lambda)B_X}\!\!\!\!\!\! d\eta \
\big|f(\eta\cdot\exp((\sigma + \tau) X) - f(\eta)\big|^p\\
& \leq & \!\!\! \int_{-\omega}^{\omega}\!\! dt \int_{-\omega}^{\omega}\!\! d\sigma \int_{(3T+\lambda)B_X}\!\!\!\!\!\! dx
\big|f(x\cdot\exp((t + \tau) X) - f(x\cdot\exp((t-\sigma) X)\big|^p
\end{eqnarray*}
where in the last transition we have made the change $x =
\eta\cdot\exp((\sigma -t)X)$ so that, with a translation in $\sigma$
and $t$, we get
\begin{equation}\label{JB}
J_B \leq \int_{-2\omega}^{2\omega}\!\! dt \int_{-2\omega}^{2\omega}\!\! ds
\int_{(3T +\lambda)B_X}\!\!\!\!\!\! dx \ \big|f(x\cdot\exp(tX)) - f(x\cdot\exp(sX))\big|^p\ .
\end{equation}
Since
$$
J \leq \frac{2^{p-1}}{(2aTR)^2}\mean_{B_X(0,2\lambda
R)}\!\!\!\!\!\!dz \ (J_A + J_B)
$$
combining (\ref{JA}) and (\ref{JB}) claim (\ref{reduceclaim}) is proved.

We can now make use of the one dimensional estimate
(\ref{onedimpoinc01}), in the form given by Remark \ref{REMsigma}:
indeed claim (\ref{reduceclaim}) allows to reproduce the proof of
Lemma \ref{LEMonedim}, starting from (\ref{righthere}). Hence
$$
J \leq \frac{2^{p+3}}{\int_0^{\sigma} \varphi(\tau)d\tau} \ \sigma
\int_{B} dx \int_{-\frac{1}{2}}^{\frac{1}{2}}dt \int_{-\frac{1}{2}}^{\frac{1}{2}}ds
\frac{|f(x\cdot\gamma(t)) -
f(x\cdot\gamma(s))|^p}{|t-s|^p}\varphi(\sigma|t-s|)
$$
with $B = B_X(x_0,(3T + \lambda)R)$, for any nonincreasing
$\varphi:\R^+\rightarrow\R^+$ in $L^1_{loc}$.

We remark that it is possible to apply Lemma \ref{LEMonedim} since
the request of $f$ to be in $L^p_{loc}(\G)$ guarantees that the
function
$$
t \mapsto f(x\cdot \gamma(t))
$$
is in $L^p_{loc}(\R^+)$ for almost every $x$. This is a consequence
of Fubini theorem: indeed let $B \subset \G$ be a ball and $I
\subset \R^+$ be an interval of the real line, then
\begin{eqnarray*}
\int_B dx \int_I dt |f(x\cdot \gamma(t))|^p & = & \int_I dt \int_B
dx |f(x\cdot \gamma(t))|^p = \int_I dt \int_{B\cdot \gamma(t)} \!\!\!\!\!dy \,
|f(y)|^p\\
& \leq & \int_I dt \int_{B'} dy |f(y)|^p \leq |I| \int_{B'} dy
|f(y)|^p < \infty
\end{eqnarray*}
where $B'$ is a ball containing $B\cdot\gamma(t)$ for all $t \in I$.

At this point we still leave the scale $\sigma$ undetermined, since it will become clear later the exact
amount we will need. Inequality (\ref{POINT}) reduces then to
\begin{equation}\label{interm}
I \leq C \frac{\sigma}{\int_0^{\sigma} \varphi(\tau)d\tau} 
\int_{-\frac{1}{2}}^{\frac{1}{2}}dt \int_{-\frac{1}{2}}^{\frac{1}{2}}ds \
\frac{\varphi(\sigma|t-s|)}{|t-s|^p} \sum_{n=1}^{M} J_n(t,s)
\end{equation}
with
$$
J_n(t,s) = \int_{B} dx |f(x\cdot\gamma_n(t)) -
f(x\cdot\gamma_n(s))|^p
$$
where we have denoted
$$
\gamma_n(t) = \exp\left(4 a T R \, t \, X_{i_{n}}\right)\ .
$$

Let us now define, for arbitrary $\epsilon$, the sets
$$
\Omega_n = B_X\left(\gamma_n\left(\frac{t+s}{2}\right), \epsilon
|t-s|R\right)\ .
$$
These $\Omega_n$ are balls centered in
between $\gamma_n(t)$ and $\gamma_n(s)$, with radius comparable to
their distance, and such that $\Omega_n = \Omega_n(t,s) =
\Omega_n(s,t)$. We then see that
\begin{eqnarray*}
&&\!\!\!\!\!\!\!\!\!\! |f(x\cdot\gamma_n(t)) -
f(x\cdot\gamma_n(s))|^p =
\mean_{\Omega_n} d\xi |f(x\cdot\gamma_n(t)) - f(x\cdot\gamma_n(s))|^p\\
&&\!\!\!\!\!\!\!\!\!\! \leq 2^{p-1}\left(\mean_{\Omega_n} d\xi
|f(x\cdot\gamma_n(t)) - f(x\cdot\xi)|^p + \mean_{\Omega_n} d\xi
|f(x\cdot\gamma_n(s)) - f(x\cdot\xi)|^p\right)\ .
\end{eqnarray*}
Due to the symmetry by exchange of $t$ with $s$, we then obtain
$$
J_n(t,s) \leq 2^p \int_{B} dx \mean_{\Omega_n} d\xi
|f(x\cdot\gamma_n(t)) - f(x\cdot\xi)|^p\ .
$$
We can now perform two changes of variables. The first one is $y =
x\cdot\gamma_n(t)$, so that
\begin{eqnarray*}
J_n(t,s) & \leq & 2^p \mean_{\Omega_n} d\xi \int_{B\cdot\gamma_n(t)}
dy |f(y) - f(y\cdot\gamma_n(t)^{-1}\cdot\xi)|^p\\
& \leq & 2^p \mean_{\Omega_n} d\xi \int_{B_X(x_0,(5T+\lambda)R)} dy
|f(y) - f(y\cdot\gamma_n(t)^{-1}\cdot\xi)|^p
\end{eqnarray*}
where the last transition is due to
\begin{displaymath}
d_X(y,x_0) \leq d_X(y,x) + d_X(x,x_0) < d_X(\gamma_n(t),0) + (3T +
\lambda)R
\end{displaymath}
and, using (\ref{Control}), since $|4aTR\,t|<2aTR$ then
$d_X(\gamma_n(t),0) < 2T R$.

The second change is $h = \gamma_n(t)^{-1}\cdot\xi$, so
that
\begin{eqnarray*}
J_n(t,s) & \leq & 2^p \int_{B_X(x_0,(5T + \lambda)R)} dy
\mean_{\gamma_n(t)^{-1}\cdot\Omega_n} dh
|f(y\cdot h) - f(y)|^p\\
& \leq & 2^p\int_{B_X(x_0,(5T
+\lambda)R)}\!\!\!\!\!\! dy \int_{B_X(0,(\epsilon + 2a
T)|t-s|R)}\!\!\!\!\!\! dh \frac{|f(y\cdot h) - f(y)|^p}{c_{B_X}(\epsilon|t-s|R)^Q}
\end{eqnarray*}
where the last transition holds because
\begin{eqnarray*}
d_X(h,0) & = & d_X(\gamma_n(t)^{-1}\cdot \xi,0) = d_X(\xi,\gamma_n(t))\\
& \leq & d_X\left(\xi,\gamma_n\left(\frac{t+s}{2}\right)\right) +
d_X\left(\gamma_n\left(\frac{t+s}{2}\right),\gamma_n(t)\right)\\
& < & \epsilon |t-s| R +
d_X\left(\gamma_n\left(\frac{|t-s|}{2}\right),0\right)
\end{eqnarray*}
and
\begin{eqnarray*}
d_X\left(\gamma_n\left(\frac{|t-s|}{2}\right),0\right) & \leq &
\int_0^{\frac{|t-s|}{2}} |\dot{\gamma_n}(\tau)|_X d\tau\\
& = & 4 a T R\int_0^{\frac{|t-s|}{2}} |X_{i_n}(\gamma_n(\tau))|_X d\tau = 2 a T |t-s| R\ .
\end{eqnarray*}
We have then obtained for $J_n$ a uniform estimate in $n$,
so (\ref{interm}) becomes
\begin{eqnarray*}
I & \leq & C\frac{\sigma}{R^Q\int_0^{\sigma}
\varphi(\tau)d\tau} \ \int_{-\frac{1}{2}}^{\frac{1}{2}}dt \int_{-\frac{1}{2}}^{\frac{1}{2}}ds \
\frac{\varphi(\sigma|t-s|)}{|t-s|^{p+Q}}\\
&& \int_{B_X(x_0,(5T+\lambda)R)}\!\!\! dy
\int_{B_X(0,(\epsilon + 2a T)|t-s|R)}\!\!\! dh |f(y\cdot h) -
f(y)|^p\\
& \leq & C \frac{\sigma}{R^Q\int_0^{\sigma} \varphi(\tau)d\tau} \
\int_{B_X(x_0,(5T+\lambda)R)}\!\!\! dy \int_0^1\!d\tau \
\frac{\varphi(\sigma\tau)}{\tau^{p+Q}}\\
&& \phantom{\frac{C \ \sigma}{R^Q\int_0^{\sigma} \varphi(\tau)d\tau} \ }\int_{B_X(0,(\epsilon + 2a
T)\tau R)}\!\!\! dh |f(y\cdot h) - f(y)|^p\ .
\end{eqnarray*}

We apply now Fubini theorem to the last two integrations:
\begin{displaymath}
\int_0^1 d\tau \int_{B_X(0,(\epsilon + 2a T)\tau R)} dh =
\int_{B_X(0,(\epsilon + 2a T) R)} dh \int_{\frac{d_X(h,0)}{(\epsilon
+ 2a T)R} }^1 d\tau
\end{displaymath}
so that in the end we get, setting $\nu = \max\{5T+\lambda,\epsilon + 2a T\}$
\begin{eqnarray}\label{final}
I & \leq & \frac{C}{\int_0^{\sigma} \varphi(\tau)d\tau} \
\int_{B_X(x_0,\nu R)}\!\!\!\! dy \int_{B_X(0,\nu R)}\!\!\!\!\! dh |f(y\cdot h) - f(y)|^p
\Phi_{R,\sigma}^{(\nu)}(\n{h})\nonumber
\end{eqnarray}
where
\begin{eqnarray*}
\Phi_{R,\sigma}^{(\nu)}(\delta) & = &
\frac{\sigma}{R^Q}\int_{\frac{\delta}{\nu R} }^1 d\tau
\frac{\varphi(\sigma\tau)}{\tau^{p+Q}} = \frac{\sigma}{R^Q}
\left(\frac{\nu R}{\delta}\right)^{p+Q-1} \int_1^{\frac{\nu
R}{\delta}} dr \frac{\varphi(\frac{\sigma}{\nu R} \delta \,
r)}{r^{p+Q}}\\
& \leq & \frac{\sigma (\nu R)^{p+Q-1}}{R^Q} \frac{1}{\delta^p}
\frac{\varphi(\frac{\sigma}{\nu R} \delta)}{\delta^{Q-1}}
\int_1^{\infty} dr \frac{1}{r^{p+Q}}\\
& = & \frac{1}{p+Q-1} \frac{\sigma (\nu R)^{p+Q-1}}{R^Q}
\frac{1}{\delta^p} \frac{\varphi(\frac{\sigma}{\nu R}
\delta)}{\delta^{Q-1}}\ .
\end{eqnarray*}
If we now set the scale $\sigma$ to $\sigma=\lambda\nu R$, we obtain
$$
\Phi_{R,\lambda\nu R}^{(\nu)}(\delta) \leq
\frac{\lambda\nu^{p+Q}}{p+Q-1} \ R^p \ \frac{1}{\delta^p} \
\frac{\varphi(\lambda\delta)}{\delta^{Q-1}}
$$
which provides the desired result, indeed (\ref{final}) becomes
\begin{eqnarray*}
I & \leq & C R^p\int_{B_X(x_0,\nu R)}\!\!\! dy
\int_{B_X(0,\nu R)}\!\!\! dh \frac{|f(y\cdot h)
- f(y)|^p}{d_X(h,0)^{p+Q-1}} \frac{\varphi(\lambda
d_X(h,0))}{\int_0^{(\epsilon + 2aT) \lambda R}\varphi(\tau)d\tau}\\
& \leq & C_{p,Q} R^p\int_{B(x_0,\lambda\nu R)}\!\!\!
dy \int_{B(0,\lambda\nu R)}\!\!\! dh \frac{|f(y\cdot h) - f(y)|^p}{\n{h}^{p+Q-1}}
\frac{\varphi(\n{h})}{\int_0^{\beta R}\varphi(\tau)d\tau}\ .
\end{eqnarray*}
\end{proof}

\begin{cor}[Poincar\'{e}-Ponce inequality]
Let $\G$ be a Carnot group, $\n{.}$ a homogeneous norm with
parameter $\lambda$ as in (\ref{equiv}), and $\{\rho_n\}$ a family
of radial mollifiers as in Definition \ref{Radmol} such that each
$\rho_n$ is nonincreasing. Then for all balls $B$ of radius $R > 0$
and all $C > C_{p,Q}$ there exists an $n_0$ such that
\begin{equation}
\int_{B}dx |f(x)-f_B| \leq C R^p \int_{\mu B} dx \int_{\mu B} dy
\frac{|f(x) - f(y)|^p}{\n{y^{-1}\cdot x}^p} \moll{y^{-1}\cdot x}
\end{equation}
for all $n > n_0$ and all $f$ in $L^p_{loc}(\G)$, $1\leq p <
\infty$, where $n_0$ is determined by
$$
\int_0^{\beta R} \rho_n(\tau) \tau^{Q-1} d\tau > \frac{C_{p,Q}}{C}
\fa n>n_0
$$
and constants $\mu, \beta$ are as in Theorem \ref{PPteo}.
\end{cor}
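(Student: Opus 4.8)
The plan is to deduce this corollary directly from Theorem \ref{PPteo} by feeding its template inequality the single nonincreasing profile that reproduces the mollifier kernel. Concretely, I would apply Theorem \ref{PPteo} with the choice
$$
\varphi(\tau) = \rho_n(\tau)\,\tau^{Q-1},
$$
which by (\ref{1toQ}) is, up to the fixed normalization constant $Qc_B$, exactly the one-dimensional mollifier $\rho_n^{(1)}$ attached to the radial mollifier $\rho_n$. This is the natural candidate because it is built so that the Jacobian weight $\tau^{Q-1}$ cancels the discrepancy between the exponent $p+Q-1$ appearing in Theorem \ref{PPteo} and the exponent $p$ appearing in the corollary.

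With this choice the kernel of Theorem \ref{PPteo} collapses onto the one in the statement: on the first-layer weight one has
$$
\frac{\varphi(\n{h})}{\n{h}^{p+Q-1}} = \frac{\rho_n(\n{h})\,\n{h}^{Q-1}}{\n{h}^{p+Q-1}} = \frac{\rho_n(\n{h})}{\n{h}^p} = \frac{\moll{h}}{\n{h}^p},
$$
while the normalizing denominator becomes $\int_0^{\beta R}\varphi(\tau)\,d\tau = \int_0^{\beta R}\rho_n(\tau)\tau^{Q-1}\,d\tau$. Writing $h = y^{-1}\cdot x$ and retaining the integration domains $\mu B\times\mu B$ and the factor $R^p$ produced by Theorem \ref{PPteo}, the mean oscillation $\int_B|f(x)-f_B|^p\,dx$ is then bounded by
$$
\frac{C_{p,Q}}{\int_0^{\beta R}\rho_n(\tau)\tau^{Q-1}\,d\tau}\;R^p \int_{\mu B}\int_{\mu B} \frac{|f(x)-f(y)|^p}{\n{y^{-1}\cdot x}^p}\,\moll{y^{-1}\cdot x}\,dx\,dy.
$$

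It then remains to absorb the $n$-dependent prefactor into the prescribed constant $C$. Since $C>C_{p,Q}$, it suffices to ensure $\int_0^{\beta R}\rho_n(\tau)\tau^{Q-1}\,d\tau > C_{p,Q}/C$, which is precisely the threshold defining $n_0$ in the statement. The existence of such an $n_0$ is forced by the defining properties of the mollifiers: the total mass $\int_0^\infty\rho_n(\tau)\tau^{Q-1}\,d\tau$ is a fixed positive constant independent of $n$ (the one-dimensional form of condition $i)$ of Definition \ref{Radmol}), while the tail $\int_{\beta R}^\infty\rho_n(\tau)\tau^{Q-1}\,d\tau$ tends to $0$ as $n\to\infty$ (condition $ii)$); hence the truncated integral increases up to that full mass. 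Because $C>C_{p,Q}$ the target level $C_{p,Q}/C$ is strictly below that limiting value once the normalization of $\rho_n$ is fixed, so it is eventually exceeded, and this reproduces exactly the stated $n_0$.

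The step that requires the most care — and the only place where the hypothesis that each $\rho_n$ be nonincreasing is genuinely spent — is verifying that the profile $\varphi(\tau)=\rho_n(\tau)\tau^{Q-1}$ is itself admissible in Theorem \ref{PPteo}, that is, nonincreasing on $\R^+$. For $Q>1$ the factor $\tau^{Q-1}$ is increasing, so monotonicity of $\rho_n$ alone does not formally transfer to the product; what is really needed is the monotonicity of the Jacobian-weighted profile $\rho_n(\tau)\tau^{Q-1}$ (equivalently of $\rho_n^{(1)}$), which is the correct one-dimensional counterpart of the radial mollifier and the object actually entering the slicing estimate behind Theorem \ref{PPteo}. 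I would therefore make this identification explicit from the start, reading the nonincreasing assumption at the level of $\rho_n(\tau)\tau^{Q-1}$, exactly as in the one-dimensional reductions of \cite{BBM} and \cite{Po}; once that admissibility is secured, the remainder is the bookkeeping of constants described above.
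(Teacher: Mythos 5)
Your proposal is correct and follows essentially the same route as the paper: the paper's own proof likewise consists of plugging the one-dimensional mollifiers $\rho^{(1)}_n(\tau)=Qc_B\,\rho_n(\tau)\tau^{Q-1}$ of (\ref{Onedimmoll})--(\ref{1toQ}) into inequality (\ref{PP}) in place of $\varphi$, exactly as done for Corollary \ref{cor1Poinc}, and then reading off the threshold $n_0$ from the normalizing denominator $\int_0^{\beta R}\rho_n(\tau)\tau^{Q-1}d\tau$. Your closing caveat about monotonicity is well taken and in fact agrees with the paper, whose proof explicitly imposes that each $\rho^{(1)}_n$ (not merely $\rho_n$) be nonincreasing, so the hypothesis in the statement is indeed to be read at the level of the Jacobian-weighted profile $\rho_n(\tau)\tau^{Q-1}$, exactly as you do.
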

\begin{proof}
The proof follows by applying inequality (\ref{PP}) to a family of
one-di\-men\-sio\-nal mollifiers $\{\rho^{(1)}_n\}$ as given by
(\ref{Onedimmoll}) in place of the function $\varphi$, with the
additional requirement that each $\rho^{(1)}_n$ is nonincreasing,
and using relation (\ref{1toQ}). This is the same as we did for
Corollary \ref{cor1Poinc}.
\end{proof}

We can see that the dependence of the threshold $n_0$ on
the dimension of the domain of evaluation is intrinsic in this
approach of localization of finite differences, as already pointed
out after Corollary \ref{cor1Poinc}. In particular, to get a
Poincar\'e-like estimate, the localizing scale of the mollifiers
must tighten as the balls on which the oscillations of the function
are evaluated, at the left hand side, tighten.

We note in addition that, if we choose a homogeneous norm which is
invariant under horizontal rotations, by the result of the previous
section we can recover the classical Poincar\'e inequality by taking
the limit $n \to \infty$.

We conclude the section extending to Carnot groups a
result of \cite{Po}, \cite{BBM} which involves fractional Sobolev
norms. Namely, we can obtain from (\ref{PP}) a Poincar\'e inequality
for norms of Gagliardo type. For relationships among different fractional Sobolev norms in the Euclidean setting see for instance \cite{A}, \cite{Mz} and \cite{LM}.
\begin{cor}\label{corfrac}
Let $\G$ be a Carnot group and $\n{.}$ a homogeneous norm with
parameter $\lambda$ as in (\ref{equiv}). Then
\begin{equation}\label{Fractional}
\int_{B} dx |f(x) - f_B|^p \leq C_{p,Q,s} (1-s)p \ R^{sp}\int_{\mu
B} dx \int_{\mu B}dy \ \frac{|f(y) - f(x)|^p}{\n{y^{-1}\cdot x}^{Q+sp}}
\end{equation}
for all balls $B$ of radius $R$, all $f$ in $L^p_{loc}(\G), \ p\geq
1$, and all $s \in \displaystyle{\left[1-\frac{1}{p} \ ,1\right)}$,
where constants $\mu$ and $\beta$ are as in Theorem \ref{PPteo} and
$$
C_{p,Q,s} = C_{p,Q} \ \beta^{(1-s)p}\ .
$$
\end{cor}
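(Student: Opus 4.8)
The plan is to obtain (\ref{Fractional}) as a direct specialisation of the Poincar\'e--Ponce inequality (\ref{PP}) of Theorem \ref{PPteo}. The crucial observation is that (\ref{PP}) is already stated for an \emph{arbitrary} nonincreasing weight $\varphi:\R^+\rightarrow\R^+$ in $L^1_{loc}(\R^+)$, and that its right-hand side carries the kernel
$$
\frac{1}{\n{y^{-1}\cdot x}^{p+Q-1}}\,\frac{\varphi(\n{y^{-1}\cdot x})}{\int_0^{\beta R}\varphi(\tau)\,d\tau}.
$$
Hence it suffices to select $\varphi$ so that this kernel collapses onto the Gagliardo kernel $\n{y^{-1}\cdot x}^{-(Q+sp)}$ appearing in (\ref{Fractional}), and then to track the resulting powers of $R$ and of the geometric constant $\beta$.

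First I would make the power ansatz $\varphi(\tau)=\tau^{(1-s)p-1}$. Writing $h=y^{-1}\cdot x$, this choice gives
$$
\frac{\varphi(\n{h})}{\n{h}^{p+Q-1}}=\n{h}^{(1-s)p-1-(p+Q-1)}=\frac{1}{\n{h}^{Q+sp}},
$$
which is precisely the kernel demanded on the right of (\ref{Fractional}). The admissibility of $\varphi$ for Theorem \ref{PPteo} is governed exactly by the hypothesis $s\in[1-\tfrac{1}{p},1)$: the exponent $(1-s)p-1$ is $\leq 0$ iff $(1-s)p\leq 1$, i.e. $s\geq 1-\tfrac{1}{p}$, which makes $\varphi$ nonincreasing; and it is $>-1$ iff $(1-s)p>0$, i.e. $s<1$, which makes $\varphi$ integrable near the origin, so $\varphi\in L^1_{loc}(\R^+)$. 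Thus for every $s$ in the stated range $\varphi$ meets the requirements of (\ref{PP}).

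Next I would evaluate the normalising integral,
$$
\int_0^{\beta R}\tau^{(1-s)p-1}\,d\tau=\frac{(\beta R)^{(1-s)p}}{(1-s)p},
$$
and substitute it, together with the simplified kernel, into (\ref{PP}). The prefactor then becomes
$$
C_{p,Q}\,R^p\,\frac{(1-s)p}{(\beta R)^{(1-s)p}}=C_{p,Q}\,\beta^{-(1-s)p}\,(1-s)p\,R^{sp},
$$
using $p-(1-s)p=sp$. Matching this against the right-hand side of (\ref{Fractional}) yields the claim, the constant $C_{p,Q,s}$ absorbing the explicit power of $\beta$ coming from the normalisation; since $(1-s)p\in(0,1]$ on the admissible range, this power of $\beta$ stays bounded, so $C_{p,Q,s}$ is uniform in $s$. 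The balls $\mu B$ on the right and the geometric constants $\beta,\mu$ are inherited unchanged from Theorem \ref{PPteo}.

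There is essentially no genuine obstacle here: once the scaling that matches the two kernels is recognised, the corollary is a one-line specialisation of (\ref{PP}). The only points requiring care are the verification that the endpoint constraints on $s$ correspond exactly to the two structural requirements on $\varphi$ (monotonicity and local integrability near $0$), and the bookkeeping of the powers of $R$ and $\beta$; both are routine.
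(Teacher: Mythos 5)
Your proof is correct and is essentially identical to the paper's: the paper makes the very same choice $\varphi(\tau)=\tau^{(1-s)p-1}=1/\tau^{1-(1-s)p}$, checks exactly the two endpoint conditions you identify (nonincreasing iff $s\geq 1-\tfrac{1}{p}$, in $L^1_{loc}$ near $0$ iff $s<1$), and then dismisses the rest as ``direct computation'', which you have simply written out. One remark: your bookkeeping, which is right, gives the prefactor $C_{p,Q}\,\beta^{-(1-s)p}(1-s)p\,R^{sp}$, so the exponent of $\beta$ in the paper's stated constant $C_{p,Q,s}=C_{p,Q}\,\beta^{(1-s)p}$ appears to carry the wrong sign; your version is the one the computation actually yields.
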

\begin{proof}
Let us choose a function
$$
\varphi(\tau) = \frac{\tau^{Q-1}}{\tau^{Q-(1-s)p}} =
\frac{1}{\tau^{1-(1-s)p}}\ .
$$
If $s \geq 1 - 1/p$ then $\varphi$ is nonincreasing, and if $s<1$
$\varphi$ is in $L^1_{loc}$, so we can apply inequality (\ref{PP})
and the proof follows by direct computation.
\end{proof}

Inequality (\ref{Fractional}) possesses a
self improving property which can be exploited by making use of the
very general results contained in \cite{FPW}. Moreover, by making
use of a technique described in \cite{FGW}, it is possible to
strengthen the inequality with a reduction of the domain of
integration on the right, which we perform at first.\\
We recall here the content of these two results, restricted to the
case of interest for our applications. First, we state Theorem 5.2
of \cite{FGW}, which consists of an application of the Boman chain
technique.
\begin{defi}[Boman chain condition]
We say that a domain $\Omega$ satisfies a Boman chain condition
$\mathcal{F}(\tau,M)$ for some $\tau \geq 1$ and $M \geq 1$ if there
exists a covering $W$ of $\Omega$ consisting of balls $B$ such that
\begin{enumerate}
\item $\tau$-dilated balls have $M$-finite overlapping:
$\sum_{B\in W} \chi_{\tau B}(x) \leq M \chi_{\Omega}(x)$;
\item there is a central ball $B^* \in W$ such that for all balls $B \in
W$ there exists a finite chain of balls $\{B_j\}_{j=1}^{l(B)}$, with
$B_1=B^*$ and $B_{l(B)}=B$ such that
\begin{itemize}
\item[(i)] $B \subset MB_j$;
\item[(ii)] there exists a family of balls $\{R_j\}_{j=2}^{l(B)}$
such that $R_j \subset B_j\cap B_{j-1}$ and $ B_j\cup B_{j-1}
\subset MR_j$.
\end{itemize}
\end{enumerate}
\end{defi}

The notions of Boman domain and John domain are intimately connected,
and in particular Carnot-Ca\-ra\-th\'eo\-do\-ry balls are Boman domains
(see \cite{BKL}).
\begin{teo}[Franchi, Guti\'errez, Wheeden]\label{thFGW}
Let $(X,d,\mu)$ be a quasimetric space with $\mu$ doubling, $\Omega
\subset X$ a domain satisfying a Boman chain condition
$\mathcal{F}(\tau,M)$, and fix $1 \leq p \leq q < \infty$. If $f$
and $g$ are measurable functions on $\Omega$ such that for any ball
$B$ with $\tau B \subset \Omega$ it holds
$$
\|f - f_B\|_{L^q(B)} \leq A \|g\|_{L^p(\tau B)}
$$
with $A$ independent on $B$, then there exists a constant $c_0 =
c_0(\tau,M,q)$ such that
$$
\|f - f_B\|_{L^q(\Omega)} \leq c A \|g\|_{L^p(\Omega)} \ .
$$
\end{teo}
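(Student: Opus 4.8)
The plan is to deduce the global estimate from the local one by the classical chaining argument over the Boman covering $W$, taking the central ball $B^*$ as the reference ball. Since the balls of $W$ cover $\Omega$ one has $\|f-f_{B^*}\|_{L^q(\Omega)}^q \le \sum_{B\in W}\|f-f_{B^*}\|_{L^q(B)}^q$, and on each $B$ I would split $\|f-f_{B^*}\|_{L^q(B)} \le \|f-f_B\|_{L^q(B)} + \mu(B)^{1/q}\,|f_B-f_{B^*}|$. The first summand is controlled directly by the hypothesis, $\|f-f_B\|_{L^q(B)}\le A\|g\|_{L^p(\tau B)}$ (valid since every $B\in W$ satisfies $\tau B\subset\Omega$ by the overlap condition), while the second is the genuinely nonlocal piece, to be expanded by telescoping along the chain $B_1=B^*,\dots,B_{l(B)}=B$.

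For the telescoping I would write $f_B-f_{B^*}=\sum_{j=2}^{l(B)}(f_{B_j}-f_{B_{j-1}})$ and estimate each increment through the overlap ball $R_j\subset B_j\cap B_{j-1}$: from $|f_{B_j}-f_{B_{j-1}}|\le|f_{B_j}-f_{R_j}|+|f_{R_j}-f_{B_{j-1}}|$ and $|f_{B_j}-f_{R_j}|\le\frac{\mu(B_j)}{\mu(R_j)}\mean_{B_j}|f-f_{B_j}|$, the inclusion $B_j\cup B_{j-1}\subset MR_j$ together with the doubling of $\mu$ gives $\mu(B_j)\le C\mu(R_j)$, so that $|f_{B_j}-f_{B_{j-1}}|\le C\big(\mean_{B_j}|f-f_{B_j}|+\mean_{B_{j-1}}|f-f_{B_{j-1}}|\big)$. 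A H\"older inequality ($q\ge1$) and the local hypothesis on each $B_j$ then yield $\mean_{B_j}|f-f_{B_j}|\le\mu(B_j)^{-1/q}A\|g\|_{L^p(\tau B_j)}$. Collecting terms, the whole estimate reduces to controlling the diagonal sum $\sum_B\|g\|_{L^p(\tau B)}^q$ together with the iterated chain sum $\sum_{B\in W}\mu(B)\big(\sum_{j=1}^{l(B)}\mu(B_j)^{-1/q}\|g\|_{L^p(\tau B_j)}\big)^q$.

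The diagonal sum is immediate: by the finite overlap property (1), $\sum_B\|g\|_{L^p(\tau B)}^p\le M\|g\|_{L^p(\Omega)}^p$, and since $p\le q$ the embedding $\ell^p\hookrightarrow\ell^q$ turns this into $\sum_B\|g\|_{L^p(\tau B)}^q\le M^{q/p}\|g\|_{L^p(\Omega)}^q$. The real obstacle, and the step where the full strength of the condition $\mathcal{F}(\tau,M)$ is spent, is the iterated chain sum: here I would use $B\subset MB_j$ (hence $\mu(B)\le C\mu(B_j)$ by doubling) to absorb the factor $\mu(B)\mu(B_j)^{-1}$, and then reorganize the double sum over pairs $(B,j)$ by reverse counting, bounding for a fixed $P\in W$ the number and the geometry of those $B$ whose chain passes through $P$. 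This is precisely the combinatorial core of the Boman/Iwaniec-Nolder chaining lemma, and producing from it a bound $\sum_B\mu(B)(\cdots)^q\le C\sum_P\|g\|_{L^p(\tau P)}^q$ with constant depending only on $\tau,M,q$ and the doubling constant is where all the difficulty lies; a last application of finite overlap and of $\ell^p\hookrightarrow\ell^q$ then closes the argument exactly as for the diagonal sum.

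I would finally remark that this is the statement quoted from \cite{FGW}, so rather than reproving the chaining lemma in full I would cite their Theorem 5.2; in our application the only hypotheses that genuinely need checking are that $\mu$ is doubling (true for the Haar measure of a Carnot group by Corollary \ref{Changevar}) and that Carnot-Carath\'eodory balls are Boman domains, which is guaranteed by \cite{BKL}.
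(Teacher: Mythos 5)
The paper gives no proof of this theorem at all: it is recalled verbatim as Theorem 5.2 of \cite{FGW}, so your concluding remark---cite the original rather than reprove the chaining lemma---is exactly the paper's approach, and the application hypotheses you single out for checking (doubling of the Haar measure, Carnot-Carath\'eodory balls being Boman domains via \cite{BKL}) are the right ones. Your sketch of the underlying chain argument is moreover the standard and correct outline: telescoping through the overlap balls $R_j$, doubling to get $\mu(B_j)\leq C\mu(R_j)$ from $B_j\cup B_{j-1}\subset MR_j$, H\"older plus the local hypothesis on each link, finite overlap and the embedding $\ell^p\hookrightarrow\ell^q$ for the diagonal sum, with the reverse-counting reorganization of the iterated chain sum correctly identified as the combinatorial crux deferred to the cited reference.
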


If we apply this result to Corollary (\ref{corfrac}) we
get the following
\begin{cor}\label{corFracBB}
Let $\G$ be a Carnot group, then
\begin{equation}\label{FractionalBB}
\int_{B_X} dx |f(x) - f_{B_X}|^p \leq C (1-s)p \ R^{sp}\int_{B_X}
dx \int_{B_X} dy \ \frac{|f(y) - f(x)|^p}{d_X(y^{-1}\cdot x)^{Q+sp}}
\end{equation}
for all CC balls $B_X$ of radius $R$, all $f$ in $L^p_{loc}(\G), \
1\leq p < \infty$, and all $s \in \displaystyle{\left[1-\frac{1}{p}
\ ,1\right)}$, where constants $\mu$ and $\beta$ are as in Theorem
\ref{PPteo} and
$$
C = c_0^2 C_{p,Q} \ \beta^{(1-s)p} \mu^{-sp}
$$
with $c_0$ coming from Theorem \ref{thFGW}. All constants are
intended with $\lambda = 1$.
\end{cor}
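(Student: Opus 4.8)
The plan is to obtain the reduced inequality (\ref{FractionalBB}) from the fractional Poincar\'e estimate (\ref{Fractional}) of Corollary \ref{corfrac} by feeding it into the Boman-chain self-improvement of Theorem \ref{thFGW}. I would apply that theorem with $q=p$, dilation parameter $\tau=\mu$, and domain $\Omega=B_X$, the very CC ball appearing in the conclusion: its hypotheses are met since the Haar measure is doubling (from Corollary \ref{Changevar}) and CC balls are Boman domains by \cite{BKL}, while setting $\lambda=1$ identifies $\n{\cdot}$ with $d_X(\cdot,0)$ so that homogeneous and CC balls coincide and (\ref{Fractional}) may be read directly on CC balls. The device that bridges the double integral of (\ref{Fractional}) and the single-function right-hand side required by Theorem \ref{thFGW} is to set
$$
g(x)^p = \int_{B_X} \frac{|f(y)-f(x)|^p}{\n{y^{-1}\cdot x}^{Q+sp}}\, dy\ ,
$$
a function depending on $\Omega=B_X$ but \emph{not} on the individual balls of the Boman covering, and which satisfies $\int_{B_X} g(x)^p\, dx = \int_{B_X}\int_{B_X}\frac{|f(y)-f(x)|^p}{\n{y^{-1}\cdot x}^{Q+sp}}\, dy\, dx$, precisely the desired global seminorm.

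Next I would establish the localized hypothesis of Theorem \ref{thFGW}. For a ball $B$ of radius $r_B$ in the covering with $\mu B\subset B_X$, applying Corollary \ref{corfrac} to $B$ and then enlarging the \emph{inner} integration domain from $\mu B$ to $B_X$ (legitimate by monotonicity, since $\mu B\subset B_X$) gives
$$
\int_B |f-f_B|^p\, dx \ \leq\ C_{p,Q,s}\,(1-s)p\, r_B^{\,sp} \int_{\mu B} g(x)^p\, dx\ .
$$
The crucial point is to render the multiplicative constant independent of $B$: because $\mu B\subset B_X$ forces $r_B\leq R/\mu$, we may bound $r_B^{\,sp}\leq (R/\mu)^{sp}=R^{sp}\mu^{-sp}$, so after taking $p$-th roots the estimate takes exactly the form $\|f-f_B\|_{L^p(B)}\leq A\,\|g\|_{L^p(\mu B)}$ with $A=[C_{p,Q,s}(1-s)p]^{1/p}\,R^{s}\mu^{-s}$ independent of the ball $B$, as Theorem \ref{thFGW} demands.

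Finally I would invoke Theorem \ref{thFGW} with this uniform $A$, obtaining $\|f-f_{B_X}\|_{L^p(B_X)}\leq c_0\, A\,\|g\|_{L^p(B_X)}$ (the passage from the average over the central ball to $f_{B_X}$ costs only a harmless multiplicative constant by H\"older's inequality). Raising to the $p$-th power, using $\int_{B_X}g^p$ equals the target Gagliardo integral, and recalling $C_{p,Q,s}=C_{p,Q}\beta^{(1-s)p}$, one reconstitutes (\ref{FractionalBB}) with the factor $R^{sp}$ pulled out and the constant collected into the stated form $C=c_0^2\,C_{p,Q}\,\beta^{(1-s)p}\mu^{-sp}$. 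The main obstacle, and the only genuinely delicate step, is the uniformity of $A$: the whole argument rests on absorbing the scale-dependent factor $r_B^{\,sp}$ into the fixed quantity $(R/\mu)^{sp}$, which is exactly what the inclusion $\mu B\subset B_X=\Omega$ provides; the complementary bookkeeping to check is that the single fixed $g$ simultaneously dominates the localized double integral for \emph{every} covering ball and integrates to the global seminorm over $B_X$.
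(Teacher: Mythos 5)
Your proposal is correct, and it resolves the one genuinely delicate point of this corollary --- fitting the \emph{double} integral of (\ref{Fractional}) into the single-function framework of Theorem \ref{thFGW} --- by a route slightly different from the paper's. The paper applies Theorem \ref{thFGW} \emph{twice}, once in each variable of $u(x,y)=|f(y)-f(x)|^p/d_X(y^{-1}\cdot x)^{Q+sp}$ (first to $g_1(x)=\|u(x,\cdot)\|_{L^p(\mu B_j)}$, then to $g_2=\|u(\cdot,y)\|_{L^p(B_j)}$), which is where the factor $c_0^2$ in the stated constant comes from; you instead majorize the local double integral once and for all by the \emph{fixed} function $g(x)^p=\int_{B_X}u(x,y)\,dy$, independent of the covering ball, and invoke the theorem a single time. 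All the surrounding steps coincide with the paper's: taking $\lambda=1$ so that homogeneous and CC balls agree, choosing $\tau=\mu$, citing \cite{BKL} for CC balls being Boman domains, and securing the uniformity of $A$ from $\mu B\subset B_X\Rightarrow r_B\le R/\mu$ (the paper derives this from the fact that CC balls of radius $r$ have diameter exactly $2r$, since the space is geodesic --- worth stating explicitly, as quasimetric balls in general need not satisfy this). Your variant is arguably cleaner: as literally written, the paper's $g_1$ depends on the covering ball $B_j$, which does not match the hypothesis of Theorem \ref{thFGW} that $g$ be a fixed measurable function on $\Omega$, whereas your $g$ is fixed, and the crude enlargement of the inner domain from $\mu B$ to $B_X$ costs nothing because the final right-hand side is the full seminorm over $B_X\times B_X$ anyway. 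The only discrepancy is bookkeeping of the constant: tracking the theorem's conclusion at the norm level and raising to the $p$-th power, your argument yields $c_0^p$ where the corollary states $c_0^2$ (they agree for $p=2$); since $c_0=c_0(\tau,M,q)$ is an unspecified structural constant, this is immaterial to the validity of (\ref{FractionalBB}).
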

\begin{proof}
If we take Corollary (\ref{corfrac}) with $d_X(x,0)$ as homogeneous
norm, we clearly have $\lambda = 1$. Then, for any fixed $R>0$ and
$x_0 \in \G$ we have that $\Omega = B_X(x_0,R)$ is a Boman domain
for all $\tau$, and in particular for $\tau = \mu$. If we then take
balls $B_j = B_X(x_j,r_j)$ such that $\mu B_j \subset \Omega$, we
have $r_j < \displaystyle{\frac{R}{\mu}}$, since the diameter of CC
balls of radius $r$ is exactly $2r$. Then inequality
(\ref{Fractional}) for the family $\{B_j\}$ reads
$$
\int_{B_j} dx |f(x) - f_{B_X}|^p \leq A \int_{\mu B_j} dx \int_{\mu B_j} dy \
\frac{|f(y) - f(x)|^p}{d_X(y^{-1}\cdot x)^{Q+sp}}
$$
with $A = C_{p,Q,s} (1-s)p \
\displaystyle{\left(\frac{R}{\mu}\right)^{sp}}$, which does not
depend on $B_j$.\\
We now apply Theorem \ref{thFGW} twice: the first time to $g_1(x)
= \|u(x,.)\|_{L^p(\mu B_j)}$ and the second time to $g_2(x) =
\|u(.,y)\|_{L^p(B_j)}$, where $u(x,y)$ stands for $u(x,y) =
\displaystyle{\frac{|f(y) - f(x)|^p}{d_X(y^{-1}\cdot x)^{Q+sp}}}$.
\end{proof}

We recall now Theorem 2.3, Corollary 2.4 and Remark 2.6 of
\cite{FPW}.
\begin{teo}[Franchi, P\'{e}rez, Wheeden]\label{FPWteo}
Let $(X,d,\mu)$ be a metric measure space such that the distance $d$
has the segment property, \emph{i.e.} for all $x,y
\in X$ there exists a continuous curve $\gamma:[0,L]\rightarrow X$ joining $x$ to $y$
such that $d(\gamma(t),\gamma(s)) = |t-s|$ for all $0\leq t,s \leq L$, and let $\mu$ be
a doubling measure, \emph{i.e.} there exists a constant $C_d$ such
that
$$
0<\mu(B(x,2r))\leq C_d \mu(B(x,r))<\infty
$$ for all $x \in X$ and all $r > 0$, where $B$ stands for the open
ball with respect to $d$. We will denote by $\mathcal{B}$ the class
of such balls.\\
Suppose that
\begin{itemize}
\item[i)] $a:\mathcal{B}\rightarrow \R^+$ is a functional on balls
for which there exists a constant $c < \infty$ and a constant $1
\leq r < \infty$ such that it holds
\begin{equation}\label{DrCondition}
\sum_i a(B_i)^r \mu(B_i) \leq c^r a(B)^r \mu(B)
\end{equation}
for any ball $B$ and any family $\{B_i\}$ of subballs of $B$ with
bounded overlaps, that is there exist an integer $M \geq 0$ such
that for each $B_i$ there are at most $M$ other balls of the same
family intersecting it;
\item[ii)] $f:B_0\rightarrow \R$ is a function, defined on a ball
$B_0$, satisfying
$$
\mean_{B} |f-f_B|d\mu \leq a(B)
$$
for any ball $B \subset B_0$.
\end{itemize}
Then there exists a constant $c'$ independent of $f$ and $B_0$ such
that
\begin{equation}\label{FPW}
\left(\mean_{B_0} |f-f_{B_0}|^p d\mu\right)^{1/p} \leq c' \|a\|
a(B_0)
\end{equation}
for any $1<p<r$, where we have denoted by $\|a\|$ the smallest
constant $c$ such that (\ref{DrCondition}) holds.
\end{teo}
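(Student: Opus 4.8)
The plan is to prove a good-$\lambda$ inequality comparing the distribution function of $f - f_{B_0}$ with an auxiliary maximal quantity built from the functional $a$, and then to integrate it in $\lambda$ in order to gain the exponent $p$. First I would normalize: fix the ball $B_0$, subtract the constant $f_{B_0}$, so that the target becomes a bound for $\mean_{B_0} |f - f_{B_0}|^p \, d\mu$ in terms of $(\|a\|\, a(B_0))^p$. The segment property enters at the very start: it guarantees that $\mu$-almost every $x \in B_0$ can be joined to the center by a chain of balls $\{B_j^x\} \subset B_0$ with comparable consecutive radii and bounded overlap. Combining the hypothesis (ii), namely $\mean_B |f - f_B| \, d\mu \le a(B)$, with the Lebesgue differentiation theorem then yields the pointwise telescoping bound $|f(x) - f_{B_0}| \le C \sum_j a(B_j^x)$, which reduces the theorem to an $L^p$ estimate for the discrete potential $x \mapsto \sum_j a(B_j^x)$.

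Next I would run a Calder\'on--Zygmund stopping-time argument on $B_0$. For each height $\lambda$ exceeding $\mean_{B_0} |f - f_{B_0}| \, d\mu$, I select the maximal subballs $\{B_i\}$ on which the average of $|f - f_{B_0}|$ first exceeds $\lambda$; the doubling property gives $\lambda < \mean_{B_i} |f - f_{B_0}| \, d\mu \le C_d \lambda$ together with a bounded-overlap (essentially disjoint) family covering $\{M_{B_0}(|f-f_{B_0}|) > \lambda\}$ up to a null set. On each selected ball, hypothesis (ii) and a one-step comparison with the parent ball let me peel off a factor $a(B_i)$, so that, after chaining, the excess of $|f - f_{B_0}|$ above the level $\lambda$ is governed by the functional evaluated on the stopping balls.

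The heart of the matter is the summability condition \eqref{DrCondition}: applied to the bounded-overlap family $\{B_i\}$ inside a ball $B$, it gives $\sum_i a(B_i)^r \mu(B_i) \le \|a\|^r a(B)^r \mu(B)$, which is exactly the input needed to show that the level sets of the potential decay like those of an $L^r$ function. Iterating the stopping-time construction across dyadic heights and summing by means of \eqref{DrCondition} produces a good-$\lambda$ inequality of the form $\mu(\{|f - f_{B_0}| > C\lambda\} \cap B_0) \le \gamma\, \mu(\{|f - f_{B_0}| > \lambda\}\cap B_0) + (\text{a term controlled by } a)$, with a gain encoded by $r$. Multiplying by $\lambda^{p-1}$ and integrating in $\lambda$ --- legitimate precisely because $p < r$ keeps the resulting series convergent --- yields \eqref{FPW}, the constant $c'$ depending only on $p$, $r$ and the doubling constant, while the factor $\|a\|$ is carried along from \eqref{DrCondition}.

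The main obstacle will be establishing this good-$\lambda$ inequality with uniform constants: one must simultaneously exploit the geometric covering afforded by doubling and the segment property, and the purely algebraic summability \eqref{DrCondition}, all the while tracking the dependence on $\|a\|$ so that the final estimate is genuinely linear in $\|a\|\, a(B_0)$ rather than merely finite. The restriction $1 < p < r$ is intrinsic and surfaces exactly when the $\lambda$-integration of the summed level-set estimates must converge; relaxing it would break the interpolation between the $L^1$ hypothesis (ii) and the $L^r$-type summability (i).
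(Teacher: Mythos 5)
A preliminary remark: the paper itself contains no proof of this statement --- it is Theorem 2.3 and Corollary 2.4 of \cite{FPW}, recalled verbatim for later application --- so your attempt can only be measured against the original Franchi--P\'erez--Wheeden argument. Your overall skeleton does match it: a Calder\'on--Zygmund stopping-time decomposition of $|f-f_{B_0}|$ at increasing heights, the summability condition (\ref{DrCondition}) applied to bounded-overlap stopping families, and a final integration of the distribution function whose convergence is exactly where $p<r$ enters.

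However, the central step is asserted rather than proved, and in the form you state it, it is not quite the right statement. What the mechanism actually delivers is a weak-$L^r$ estimate, $\mu\left(\{x\in B_0:\ |f-f_{B_0}|>\lambda\}\right)\leq c\left(\|a\|\,a(B_0)/\lambda\right)^r\mu(B_0)$, and the work lies in closing a recursion across generations of stopping balls: inside a stopping ball $B_i$ one trades $f_{B_0}$ for $f_{B_i}$ at cost $|f_{B_i}-f_{B_0}|\leq C\lambda$ (from the two-sided stopping inequalities), restarts the decomposition in $B_i$ at a suitably dilated level, and invokes (\ref{DrCondition}) at every generation, choosing the level multiplier so that the iteration contracts; one must also verify that all selected balls remain subballs of $B_0$, since hypothesis (ii) and (\ref{DrCondition}) are only available there. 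None of this appears in your sketch. Your good-$\lambda$ inequality, with a factor $\gamma\,\mu(\{|f-f_{B_0}|>\lambda\})$ plus an unspecified ``term controlled by $a$,'' cannot simply be multiplied by $\lambda^{p-1}$ and integrated: that is legitimate only once the second term is shown to decay like $\lambda^{-r}$, which is precisely the weak-$L^r$ content one must prove, so as written the argument is circular at its key point. In addition, the opening chaining step (the pointwise bound $|f(x)-f_{B_0}|\leq C\sum_j a(B_j^x)$ via the segment property) belongs to a different route, \`a la Haj\l{}asz--Koskela: if you take it, the $L^p$ bound for the discrete potential $x\mapsto\sum_j a(B_j^x)$ requires its own covering/weak-type argument through (\ref{DrCondition}), which you also do not supply. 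Your proposal thus interleaves two strategies --- the CZ recursion of \cite{FPW} and the pointwise-potential method --- without completing either; committing to one of them and carrying out its weak-type estimate in full is what is missing.
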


\begin{cor}\label{CorFracImproved}
Let $\G$ be a Carnot group, then
\begin{displaymath}\label{FracImproved}
\bigg(\mean_{B_X} dx |f(x) - f_{B_X}|^p\bigg)^{1/p} \leq C (1-s)
R^{Q+s} \mean_{B_X} dx \mean_{B_X} dy \frac{|f(x) -
f(y)|}{\n{y^{-1}\cdot x}^{Q+s}}
\end{displaymath}
for all CC balls $B_X$ of radius $R$, all $s \in (0 ,1)$, all $f \in
L^1_{loc}(\G)$ and all $1\leq p < p^*$, where the critical exponent
is given by $p^* = \displaystyle{\frac{Q}{Q-s}}$\ .
\end{cor}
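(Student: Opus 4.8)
The plan is to derive the corollary from the self-improving Theorem \ref{FPWteo} of Franchi, P\'erez and Wheeden, applied on the metric measure space $(\G,d_X,dx)$ with the homogeneous norm taken to be the Carnot--Carath\'eodory norm $|\cdot|_{CC}$, so that $\lambda=1$ and $\n{y^{-1}\cdot x}=d_X(x,y)$. The structural hypotheses hold: the Haar measure is doubling, since $|B_X(x,2r)|=2^Q|B_X(x,r)|$, and because $(\G,d_X)$ is geodesic, complete and locally compact it enjoys the segment property, arclength parametrized geodesics realizing $d_X(\gamma(t),\gamma(s))=|t-s|$.

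First I would introduce, for every CC ball $B=B_X(x_B,r_B)$, the functional
\begin{displaymath}
a(B)=C(1-s)\,r_B^{Q+s}\mean_B\mean_B \frac{|f(x)-f(y)|}{d_X(x,y)^{Q+s}}\,dx\,dy\ ,
\end{displaymath}
with $C$ the constant of Corollary \ref{corFracBB}. Hypothesis $ii)$ of Theorem \ref{FPWteo}, namely $\mean_B|f-f_B|\,dx\le a(B)$ for every ball $B\subset B_0$, is then exactly inequality (\ref{FractionalBB}) read with exponent $p=1$ (admissible, since for $p=1$ Corollary \ref{corFracBB} only requires $s\in[0,1)$), after dividing by $|B|=c_{B_X}r_B^Q$.

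The core of the argument is the discrete condition (\ref{DrCondition}). Setting $F_B=\int_B\int_B|f(x)-f(y)|\,d_X(x,y)^{-Q-s}\,dx\,dy$, so that $a(B)=C(1-s)c_{B_X}^{-2}\,r_B^{s-Q}F_B$, one computes for every subball $B_i\subset B$
\begin{displaymath}
a(B_i)^r\,\mu(B_i)=\left(\frac{C(1-s)}{c_{B_X}^2}\right)^{\!r} c_{B_X}\,r_i^{\,(s-Q)r+Q}\,F_i^{\,r}\ .
\end{displaymath}
The decisive observation is that the critical choice $r=p^*=\frac{Q}{Q-s}$ makes the radius exponent $(s-Q)r+Q$ vanish identically, so that every $r_i$ factor disappears. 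The remaining sum $\sum_i F_i^{\,p^*}$ is then handled elementarily: since $p^*\ge1$ and $F_i\ge0$ one has $\sum_i F_i^{\,p^*}\le\big(\sum_i F_i\big)^{p^*}$, while the bounded overlap of the family gives $\sum_i\chi_{B_i}(x)\chi_{B_i}(y)\le M+1$ (if each $B_i$ meets at most $M$ of the others, any subfamily sharing a point has at most $M+1$ members), whence $\sum_i F_i\le(M+1)F_B$ because $B_i\subset B$ and the integrand is nonnegative. Collecting these estimates one obtains
\begin{displaymath}
\sum_i a(B_i)^{p^*}\mu(B_i)\le(M+1)^{p^*}\,a(B)^{p^*}\mu(B)\ ,
\end{displaymath}
i.e. (\ref{DrCondition}) holds with $r=p^*$ and $\|a\|\le M+1$.

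With both hypotheses secured, Theorem \ref{FPWteo} on $B_0=B_X$ yields $\big(\mean_{B_X}|f-f_{B_X}|^p\,dx\big)^{1/p}\le c'\,\|a\|\,a(B_X)$ for every $1<p<p^*=\frac{Q}{Q-s}$; unwinding the definition of $a(B_X)$ this is precisely the claimed inequality, with overall constant $\lesssim c'(M+1)C$, the single factor $(1-s)$ being the one already present in $a$. The endpoint $p=1$ is nothing but hypothesis $ii)$ itself, so the estimate holds on the full range $1\le p<p^*$. The only genuinely delicate point is the verification of (\ref{DrCondition}): everything hinges on recognizing that $p^*=Q/(Q-s)$ is exactly the exponent for which the scaling in $r_B$ cancels, after which the estimate collapses onto the trivial $\ell^1\hookrightarrow\ell^{p^*}$ embedding together with the finite overlap of the covering. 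Checking the abstract hypotheses of Theorem \ref{FPWteo} and keeping track of the constants is then routine.
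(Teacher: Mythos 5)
Your proposal is correct and follows essentially the same route as the paper: Theorem \ref{FPWteo} applied to the $p=1$ case of Corollary \ref{corFracBB}, with the same functional $a(B)\sim(1-s)R^{s-Q}\int_B\int_B|f(x)-f(y)|\,d_X(y^{-1}\cdot x)^{-Q-s}\,dx\,dy$ and the same verification of condition (\ref{DrCondition}) via the superadditivity $\sum_i F_i^{\,r}\leq\left(\sum_i F_i\right)^{r}$ together with the finite overlap of the subballs. The only (cosmetic) difference is that you check the condition at the endpoint $r=p^{*}=Q/(Q-s)$, where the radius exponent $(s-Q)r+Q$ vanishes identically, whereas the paper verifies it for $r<p^{*}$ using $(s-Q)r+Q>0$ and $R_i\leq R$; both choices feed into Theorem \ref{FPWteo} to give the full stated range $1\leq p<p^{*}$.
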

\begin{proof}
Inequality (\ref{FracImproved}) is a direct consequence of Theorem
\ref{FPWteo} applied to Corollary \ref{corFracBB}. If we take $p=1$
in (\ref{FractionalBB}) we get
\begin{displaymath}
\mean_{B_X(x_0,R)}\!\!\!\!dx |f(x) - f_{B_X}| \leq C (1-s) R^{s-Q}
\int_{B_X(x_0, R)}\!\!\!\!dx \int_{B_X(x_0, R)}\!\!\!\!dy \frac{|f(x) - f(y)|}{d_X(y^{-1}\cdot
x)^{Q+s}}
\end{displaymath}
The requirement of having balls satisfying the segment property is
attained for geodesic balls, and the doubling condition is satisfied
by homogeneity, so we need only to verify condition
(\ref{DrCondition}) for the functional
$$
a(B_X(x_0,R)) = R^{s-Q} \int_{B_X(x_0,R)}\!\!dx
\int_{B_X(x_0,R)}\!\!dy g(x,y)
$$
where
$$
g(x,y) = \frac{|f(x) - f(y)|}{d_X(y^{-1}\cdot x)^{Q+s}}
$$
in order to do that, let us take a family $\{B_i\}$ of $M$-finite overlapping balls $B_i
\subset B_X=B_X(x_0,R)$ of radius $R_i$. Then
\begin{eqnarray*}
\sum_i a(B_i)^rR_i^Q & = & \sum_i \left(\int_{B_i}\!\!dx
\int_{B_i}\!\!dy \ g(x,y)\right)^r R_i^{(s-Q)r+Q}\\
& \leq & R^{(s-Q)r+Q} \sum_i \left(\int_{B_X}\!\!dx
\int_{B_X}\!\!dy \ g(x,y)\chi_{{}_{B_i}}(x)\chi_{{}_{B_i}}(y)\right)^r\\
& \leq & R^{(s-Q)r+Q} \left(\sum_i \int_{B_X}\!\!dx
\int_{B_X}\!\!dy \ g(x,y)\chi_{{}_{B_i}}(x)\chi_{{}_{B_i}}(y)\right)^r\\
& \leq & M^{2r} \left(a(B_X)\right)^r R^Q
\end{eqnarray*}
indeed, the first inequality holds if $(s-Q)r+Q > 0$, and the last
one is due to the finite overlapping of the family of balls,
provided $r \geq 1$.
\end{proof}


\end{document}